\documentclass[a4paper,11pt]{article}

\usepackage[T1]{fontenc}
\usepackage[utf8]{inputenc}
\usepackage[bookmarks = true, hidelinks]{hyperref}
\usepackage{amsmath} 
\usepackage{mathtools} 
\usepackage{amsthm} 
\usepackage{amssymb} 
\usepackage{mathrsfs} 
\usepackage[capitalise]{cleveref} 
\usepackage[english]{babel} 
\usepackage{enumitem} 
\usepackage{lipsum} 
\usepackage{graphicx} 
\usepackage{tikz} 
\usetikzlibrary{calc, matrix} 
\usepackage{tikz-cd} 
\usepackage{xcolor} 
\usepackage{eucal} 
\DeclareMathAlphabet{\mathpzc}{OT1}{pzc}{m}{it} 
\usetikzlibrary{external}

\usepackage[a4paper,
left = 1in, 
right = 1in,
top = 1in,
bottom = 1.3in,
marginparwidth = 1.9in,
marginparsep = 0.2in,
headsep = 30pt]{geometry} 
\usepackage{parskip} 
\setlength{\parindent}{20pt} 
\linespread{1.25} 
\widowpenalty10000 
\clubpenalty10000 
\raggedbottom 

\theoremstyle{definition}
\newtheorem{thm}{Theorem}
\newtheorem*{thm*}{Theorem}
\numberwithin{thm}{section}
\newtheorem{lem}[thm]{Lemma}
\newtheorem{prop}[thm]{Proposition}

\newtheorem{rmk}[thm]{Remark}
\numberwithin{figure}{section}
\numberwithin{table}{section}

\newcommand{\name}{\operatorname}
\newcommand{\bb}{\mathbb}
\newcommand{\mf}{\mathfrak}
\newcommand{\mc}{\mathcal}

\newcommand{\mb}{\mathbf}
\newcommand{\mi}{\mathsf}
\newcommand{\mr}{\mathrm}
\newcommand{\Hom}{\operatorname{Hom}}

\newcommand{\End}{\operatorname{End}}
\newcommand{\Aut}{\operatorname{Aut}}

\newcommand{\Tor}{\operatorname{Tor}}
\newcommand{\Ext}{\operatorname{Ext}}

\newcommand{\K}{\operatorname{Ker}}
\newcommand{\Ck}{\operatorname{Coker}}
\newcommand{\Id}{\operatorname{Id}}

\newcommand{\Ra}{\Rightarrow}

\newcommand{\xra}{\xrightarrow}

\newcommand{\hra}{\hookrightarrow}

\newcommand{\op}{\text{op}}

\newcommand{\wh}{\widehat}

\newcommand{\blank}{{\,\cdot\,}}

\newcommand{\T}{\mathbb{T}}

\newcommand{\StMod}{\mathsf{StMod}}
\newcommand{\Mod}{\mathsf{Mod}}

\newcommand{\Pic}{\textnormal{Pic}}
\newcommand{\pic}{\mathfrak{pic}}

\newcommand{\opgreek}[1]{\begingroup\mathgroup-1 #1\endgroup}
\DeclareMathOperator{\ho}{\opgreek{\pi}}
\newcommand{\thra}{\twoheadrightarrow}
\newcommand{\proj}{(\text{proj})}
\newcommand{\Ind}{\name{Ind}}

\author{Jeroen van der Meer\\
		Richard Wong}
\title{\vspace{-1cm}{\LARGE Endotrivial modules for cyclic $p$-groups and generalized quaternion groups via Galois descent}\vspace{3mm}\\
	{\Huge }}

\begin{document}

\maketitle
\begin{abstract}
	\noindent In this paper, we investigate the group of endotrivial modules for certain $p$-groups. Such groups were already been computed by Carlson--Th{\'e}venaz using the theory of support varieties; however, we provide novel homotopical proofs of their results for cyclic $p$-groups, the quaternion group of order~8, and for generalized quaternion groups using Galois descent and Picard spectral sequences, building on results of Mathew and Stojanoska. Our computations provide conceptual insights into the classical work of Carlson--Th{\'e}venaz.
\end{abstract}

\tableofcontents

\pagebreak

\section{Overview}
\label{sectionoverview}

Throughout this paper, let $G$ denote a finite group, and let $k$ be a field of characteristic~$p$, where~$p$ divides the order of $G$ (i.e. the characteristic is modular). In this setting, one can study the representation theory of $G$ over~$k$. As $p$ divides $|G|$, Maschke's theorem fails, which infamously implies that the structural phenomena of representation theory over modular characteristics are wildly different than the usual theory over other characteristics. Central to modular representation theory, then, is the study of the structural property of the category of $kG$-modules.

One particular instance of this is the problem of computing the group of \textbf{endotrivial modules}
\[T(G) \coloneqq \big\{M \in \mi{Mod}(kG) \ | \ \End_k(M) \cong k \oplus P \big\}\,\text{.}\]
That is, the $k[G]$-modules $M$ such that the endomorphism module decomposes as the direct sum of~$k$, the trivial $kG$-module, and a projective $kG$-module $P$. This forms a group under tensor product. The group of endotrivial modules was first studied by Dade for elementary abelian groups $G \cong (C_p)^n$ \cite{dade}, who regarded endotrivial modules as a stepping stone towards the study of the more general endopermutation modules. Endotrivial modules over $p$-groups were later classified by work of Carlson and Th{\'e}venaz using purely representation-theoretic techniques, such the theory of support varieties, cf. for instance \cite{Carlson2000-ux}. The classification for arbitrary finite groups is an active problem that has been studied by numerous people.

The group of endotrivial modules can be approached through homotopy theory --- something we will make profound use of in this paper. In \cref{sectionintroduction}, we realise the group of endotrivial modules as the Picard group of the stable module $\infty$-category. In fact, we obtain a Picard space, which admits a decomposition coming from a limit decomposition of the stable module $\infty$-category. This decomposition is then shown to be amenable to spectral sequence techniques.

In certain cases, the decomposition of the stable module $\infty$-category can be viewed through the lens of Galois theory. We take up this topic in \cref{sectiongaloisdescent} and we use a result of Rognes to give new proofs of the decomposition for cyclic $p$-groups and quaternion groups.

Finally, in \cref{sectionendotrivial} we evaluate the limit spectral sequences associated to the decomposition of the stable module category to explicitly compute the group of endotrivial modules for cyclic $p$-groups (\cref{outcomeC2n}, \cref{outcomeCpn}) and generalized quaternion groups (\cref{resultq8}, \cref{resultQ2n}). Although these groups have already been computed, the method given here is entirely new. In particular, our approach allows for a new interpretation of the fact that the group of endotrivial modules over $Q_8$ depends on the arithmetic structure of the base field; we shall see that it arises naturally from a certain nonlinear differential in the limit spectral sequence (see \Cref{rmk:q8}).  Furthermore, our approach to $T(Q_{2^n})$ is independent of the computation for $T(Q_8)$, whereas this was a crucial step in the classical approach (see \Cref{rmk:q2n}).

\section{Background}
\label{sectionintroduction}

\subsection{Endotrivial modules as a Picard group}

Let $G$ denote a finite group, and let $k$ be a field of characteristic~$p$, where~$p$ divides the order of~$G$ (i.e. the characteristic is modular). Central to modular representation theory is the study of the structural properties of the category of $kG$-modules. One particular instance of this is the problem of computing the group of \textbf{endotrivial modules}
\[T(G) \coloneqq \big\{M \in \mi{Mod}(kG) \ | \ \End_k(M) \cong k \oplus P \big\}\,\text{,}\]
where $P$ denotes a projective module. That is, we're looking at $kG$-modules $M$ such that the endomorphism module decomposes as the direct sum of~$k$, the trivial $kG$-module, and a projective $kG$-module $P$. Endotrivial modules form a group under tensor product. The group of endotrivial modules can be approached through homotopy theory, and the goal of this subsection is to illustrate how this can be done.

The failure of Maschke's theorem implies that not all $kG$-modules are projective. One can then additively localize the category of $kG$-modules $\Mod(kG)$ with respect to the maps that factor through projective modules. The resulting localization is called the \textbf{stable module category} $\StMod(kG)$. It carries the structure of a tensor-triangulated category.

Given a symmetric monoidal category $(\mc{C}, \otimes, \mb{1}_{\mc{C}})$, one can study the \textbf{Picard group} of invertible objects:
\[\Pic\big(\mc{C}\big) \coloneqq  \big\{M \in \mc{C} \ | \ \exists N \textnormal{ such that } M \otimes N \simeq \mb{1}_{\mc{C}} \big\}\]
In particular, if $\mc{C} = \StMod(kG)$, then we claim that $\Pic(\mc{C})$ coincides with the group of endotrivial modules:
\[T(G) \cong \Pic\big(\StMod(kG)\big)\,\text{.}\]
That this is the case, appears to implicitly known to experts, but we haven't been able to find a proof of this fact in the literature, so we digress for a moment to verify it.

\begin{lem}
    \label{projectiveequivalence}
    Two $kG$-modules $M$ and $N$ are equivalent in $\StMod(kG)$ if and only if there exist projective modules $P$ and $Q$ such that $M \oplus P \cong N \oplus Q$.
\end{lem}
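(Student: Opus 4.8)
The plan is to use the standard model of $\StMod(kG)$ in which the objects are $kG$-modules and $\Hom_{\StMod}(M,N)=\Hom_{kG}(M,N)/\mathrm{PHom}(M,N)$ is the quotient by the subgroup of homomorphisms factoring through a projective. The backward implication is then immediate: if $M\oplus P\cong N\oplus Q$ with $P,Q$ projective, then every projective is a zero object in $\StMod(kG)$, so the inclusion $\iota\colon M\hookrightarrow M\oplus P$ is a stable isomorphism. Indeed, writing $\pi$ for the complementary projection we have $\pi\iota=\Id_M$, while $\Id_{M\oplus P}-\iota\pi$ is the idempotent onto $P$ and hence factors through a projective, so $\iota\pi=\Id$ in $\StMod(kG)$. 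The same holds for $N\hookrightarrow N\oplus Q$, and composing with the given isomorphism yields $M\simeq N$ in $\StMod(kG)$.

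For the forward implication I would lift a stable inverse pair to honest homomorphisms $f\colon M\to N$ and $g\colon N\to M$. By definition $gf-\Id_M$ factors through a projective, say $gf=\Id_M+vu$ with $u\colon M\to P$ and $v\colon P\to M$ for some projective $P$; the idea is to absorb this error term by stabilizing along $N\oplus P$. Setting $\phi=\binom{f}{u}\colon M\to N\oplus P$ and $\psi=(\,g\ \ {-v}\,)\colon N\oplus P\to M$, the defining relation gives at once $\psi\phi=gf-vu=\Id_M$, so $\phi$ is a split monomorphism and $N\oplus P\cong M\oplus C$ as honest $kG$-modules, where $C=\Ck(\phi)$ is the complementary summand.

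It then remains to prove that $C$ is projective, for taking $Q\coloneqq C$ exhibits the desired isomorphism $N\oplus P\cong M\oplus Q$; this last step is where I expect the real work to lie. The point is that $\phi$ is not merely a split monomorphism but also a stable isomorphism: composing it with the projection $p\colon N\oplus P\to N$ recovers $p\phi=f$, and since $P\simeq 0$ the map $\overline p$ is an isomorphism in $\StMod(kG)$, so from the invertibility of $\overline f$ one deduces that $\overline\phi$ is invertible. As $\psi$ is a genuine retraction of $\phi$, the relation $\overline\psi\,\overline\phi=\Id_M$ forces $\overline\psi=\overline\phi^{-1}$, whence $\overline\phi\,\overline\psi=\Id_{N\oplus P}$; equivalently, the idempotent $\Id_{N\oplus P}-\phi\psi$, whose image is precisely $C$, factors through a projective module $R$. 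Restricting this factorization along the inclusion $C\hookrightarrow N\oplus P$ and the projection $N\oplus P\twoheadrightarrow C$ then writes $\Id_C$ as a map through $R$, so $C$ is a direct summand of a projective and therefore projective. The crux is thus the identity ``stably trivial objects are exactly the projectives'', which rests on the elementary fact that direct summands of projectives are projective, together with the verification that $\psi$ is a two-sided stable inverse of $\phi$; granting these, the argument would be complete.
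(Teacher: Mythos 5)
Your proof is correct, and its skeleton is in fact the paper's: both arguments stabilize the lifted stable inverse pair $(f,g)$ to a split monomorphism $\phi = (f,u)\colon M \to N\oplus P$ with honest retraction $\psi$ satisfying $\psi\phi = \Id_M$, identify $N\oplus P \cong M\oplus C$, and then conclude by the same restriction trick once it is known that the idempotent $\Id_{N\oplus P} - \phi\psi$ factors through a projective. Where you genuinely diverge is in how that factorization is obtained, and this is the real content of the forward direction. The paper exhibits it explicitly: it writes down a factorization through $Q\oplus P\oplus P$ built from both error terms $gf - \Id = \phi_2\phi_1$ and $fg - \Id = \psi_2\psi_1$, so the argument never reasons inside $\StMod(kG)$ at all---it is a pure module-level computation. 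You instead argue categorically: the projection $\overline{p}\colon N\oplus P \to N$ is a stable isomorphism because $P$ is stably zero, hence $\overline{\phi} = \overline{p}^{-1}\overline{f}$ is invertible, and a one-sided inverse of an invertible morphism is automatically two-sided, so $\overline{\phi}\,\overline{\psi} = \Id$ and the idempotent factors through some unspecified projective. Your route is cleaner---no bookkeeping with the $\psi_i$'s---and it isolates the conceptual point that stably trivial summands are exactly the projectives. What the paper's computation buys in exchange is explicitness and self-containedness: it produces the projective and the factoring maps concretely, using only the two defining relations of the stable equivalence, whereas your argument leans on the quotient-category formalism (morphisms in $\StMod(kG)$ as $\Hom_{kG}$ modulo the ideal of maps factoring through projectives, with composition well defined there). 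Both are valid proofs of the same lemma; yours is the more conceptual reading of the same construction.
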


\begin{proof}
    If $M$ and $N$ are projectively equivalent, then the natural maps $f \colon M \hra M \oplus P \xrightarrow{\sim} N \oplus Q \thra N$ and $g \colon N \hra N \oplus Q \xrightarrow{\sim} M \oplus P \thra M$ form the desired equivalence, in that $g \circ f - \Id$ factors through $Q$, and $f \circ g - \Id$ factors through $P$. 
    
    The converse is taken from \cite{1340906}. Suppose we have maps $f \colon M \to N$ and $g \colon N \to M$ with
    \[\begin{split}
        g\circ f - \Id &= M  \xra{\phi_1} P \xra{\phi_2} M \\
        f \circ g - \Id &= N \xra{\psi_1} Q \xra{\psi_2} N
    \end{split}\]
    Define $f' \colon M \to N \oplus P$ as $(f,\phi_1)$, and $g' \colon N \oplus P \to M$ as $g - \phi_2$. Then $g'\circ f' = \Id$, so that $M \oplus \K g' \cong N \oplus P$. We're done if we show that $\K g'$ is projective. This follows once we verify that $f' \circ g' - \Id$ factors through a projective: indeed once we know this, we may observe that the map becomes $-\Id$ when restricted to $\K g'$ but after this restriction it of course still passes through this projective, so $\K g'$ becomes a summand thereof. So let's verify the claim. Simply observe that $f' \circ g' - \Id$ factors as 
    \[\begin{tikzcd}[column sep = small, row sep = large]
    N \oplus P \ar[rd,"(\psi_1{,}p{,}\phi_1 \circ g - \phi_1 \circ \phi_2)"'] \ar[rr,"f' \circ g' - \Id"] & & N \oplus P \\
    & Q \oplus P \oplus P \ar[ur, "(\psi_2 - f \circ \phi_2{,} p_2 - p_1)"'] & \end{tikzcd}\]
    and $Q \oplus P \oplus P$ is projective.
\end{proof}

From this lemma, we learn that a module $M$ is $\otimes$-invertible if there exists a module~$N$ such that $M \otimes N \oplus Q \cong k \oplus P$ for some projective modules $P$ and $Q$. By applying the Krull--Schmidt theorem to this, we deduce two observations:
\begin{itemize}[noitemsep, topsep = 0pt]
    \item The $Q$ is not needed and we can simply write $M \otimes N \cong k \oplus P$ for some projective module~$P$; 
    \item $M$ and $N$ split up as $M_0 \oplus \proj$ and $N_0 \oplus \proj$ where `$\proj$' will henceforth be shorthand for `some projective module which doesn't deserve its own symbol'.
\end{itemize}

\begin{lem}
    The Picard group of $\StMod(kG)$ is isomorphic to $T(G)$.
\end{lem}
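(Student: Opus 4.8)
The plan is to prove that the module-level condition defining $T(G)$ and the condition of being $\otimes$-invertible in $\StMod(kG)$ cut out exactly the same objects, so that the two groups agree as subgroups of isomorphism classes in $\StMod(kG)$. The bridge is the standard natural isomorphism of $kG$-modules $\End_k(M) \cong M \otimes_k M^*$, where $M^* = \Hom_k(M,k)$ is the $k$-linear dual equipped with its conjugation action; here I restrict to finitely generated modules, as in the classical setup. Under this isomorphism the defining relation $\End_k(M) \cong k \oplus P$ becomes $M \otimes M^* \cong k \oplus P$, which, by the discussion preceding the statement, says precisely that $M^*$ is a tensor-inverse of $M$ in $\StMod(kG)$. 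Thus every endotrivial module is already invertible, with explicit inverse $M^*$.

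Next I would package this into a map $T(G) \to \Pic(\StMod(kG))$ induced by the symmetric monoidal localization $\Mod(kG) \to \StMod(kG)$. Well-definedness is the observation just made; the map is a group homomorphism because the localization carries $\otimes$ to $\otimes$ and $k$ to the unit, and both group laws are tensor product. For injectivity, suppose an endotrivial $M$ maps to the unit, i.e.\ $M \simeq k$ in $\StMod(kG)$. By \cref{projectiveequivalence} this gives $M \oplus P \cong k \oplus Q$, and Krull--Schmidt forces $M \cong k \oplus \proj$, so $M$ represents the identity class already.

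For surjectivity, I would represent an invertible object by a module $M$ and invoke rigidity of the finitely generated stable module category: every object is dualizable, with dual represented by the $k$-linear dual $M^*$. Since tensor-inverses in a symmetric monoidal category are unique up to isomorphism and coincide with the dual for an invertible object, the coevaluation $k \to M \otimes M^*$ is an isomorphism in $\StMod(kG)$. Translating back via \cref{projectiveequivalence} and Krull--Schmidt then yields $\End_k(M) \cong M \otimes M^* \cong k \oplus \proj$, so $M$ is endotrivial and its class is hit. The one step that needs genuine care --- everything else being formal monoidal-category manipulation together with Krull--Schmidt --- is the identification of the abstract tensor-inverse with the concrete $k$-linear dual, that is, that $\StMod(kG)$ is rigid with duality computed by $\Hom_k(-,k)$. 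I expect this to be the main obstacle, and would dispatch it by citing rigidity of the stable module category rather than reproving the evaluation and coevaluation identities by hand.
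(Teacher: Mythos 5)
Your proof is correct, but it handles the crucial converse direction by a genuinely different route than the paper. Both arguments agree on the easy direction (endotrivial $\Rightarrow$ invertible, via $\End_k(M) \cong M \otimes M^*$), and both ultimately rest on \cref{projectiveequivalence} together with Krull--Schmidt. The difference is in how one shows that possessing an abstract tensor-inverse forces $\End_k(M) \cong k \oplus \proj$: you cite rigidity of the finitely generated stable module category --- every object is dualizable with dual $\Hom_k(-,k)$ --- plus the general monoidal fact that an invertible object's inverse coincides with its dual and that the evaluation/coevaluation maps are then isomorphisms, so that $M \otimes M^* \simeq k$ in $\StMod(kG)$ and Krull--Schmidt finishes the argument. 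The paper instead proves exactly this identification by hand: starting from an arbitrary inverse $N$ with $M \otimes N \cong k \oplus P$, it constructs an explicit map $f \colon N \to M^*$ fitting into a commutative square that exhibits a section of the evaluation map, deduces that $k$ is a direct summand of $M \otimes M^*$, and then tensors with $N$ to identify $N \cong M_0^* \oplus \proj$, from which the endotriviality of $M$ follows. Your route is shorter and conceptually cleaner, at the cost of importing rigidity as a black box; the paper's route is longer but self-contained, which matters here since the authors' stated reason for including the proof at all is that they could not locate one in the literature --- in effect, their diagram argument amounts to a proof, specialized to invertible objects, of the very rigidity statement you propose to cite. Your additional bookkeeping (well-definedness and the homomorphism property of the comparison map, injectivity via Krull--Schmidt) makes explicit the group-level identification that the paper treats as implicit once the objects are matched up, which is a genuine, if minor, improvement in rigor.
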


\begin{proof}
    Suppose first that $M$ is endotrivial. As $M$ is finitely generated, we have $\End_k(M) \cong M \otimes M^*$, and so $M$ is $\otimes$-invertible with inverse $M^*$. Conversely, suppose $M$ is a $kG$-module with $\otimes$-inverse $N$. By the discussion above we may write $M \otimes N \cong k \oplus P$, and we have $M \cong M_0 \oplus \proj$ where $M_0$ is indecomposable.
    
    Have a look at the commutative diagram\vspace{0mm}
    \[\begin{tikzcd}[column sep = huge, row sep = huge]
        M \otimes N\ar[r,"\sim"] \ar[d,"1\otimes f"'] & k \oplus P \ar[d,"\pi"] \\
        M \otimes M^* \ar[r,"\name{ev}"'] & k 
        \end{tikzcd}\]
    Here $\pi$ is the projection map, $\name{ev}$ the evaluation map on $M \otimes M^*$, and $f$ is the map $N \to M^*$ sending $n$ to $\phi \colon m \mapsto \pi(m \otimes n)$, where `$m \otimes n$' really refers to its isomorphic image in $k \oplus P$. As $\pi$ admits a section, so does $\name{ev}$, which means $k$ is a summand of $M \otimes M^*$. By tensoring with~$N$ we see that $N$ is a summand of $M \otimes N \otimes M^*$. Now write
    \[\begin{split}
        M \otimes N \otimes M^* &\cong (k \oplus P) \otimes \big(M_0^* \oplus \proj\big) \\
        &\cong M_0^* \oplus \proj \end{split}\]
    This tells us that $N$ is either projective or $M_0^*$ plus something projective. In the former case,~$M$ was trivial and there was nothing to prove anyway, and in the latter case, we've found that $M \otimes M^* \oplus \proj \cong k \oplus P$, and by the discussion above this lemma, this implies the desired result.
\end{proof}

The stable module category is in fact the homotopy category of a stable symmetric monoidal $\infty$-category, which can be seen from the fact that $\StMod(kG)$ can be described as the Verdier quotient of the bounded derived category of $kG$-modules by the perfect complexes --- a result first proved in \cite{rickard1989derived}. This observation is what makes the study of endotrivial modules amenable to homotopical techniques. We remark that such an approach is not entirely new; similar themes can be found in \cite{grodal2016endotrivial} and \cite{barthel77torsion}. 

To any symmetric monoidal $\infty$-category we can in fact associate a Picard \textit{space} $\mc{P}\mr{ic}(\mc{C})$, defined as the $\infty$-groupoid underlying the full subcategory on the $\otimes$-invertible objects in $\mc{C}$.  This is an enhancement of the classical Picard group.

\begin{lem}
    \label{homotopygroupsofPic}
    The homotopy groups of the Picard space are as follows:
    \[\pi_t \mc{P}\mr{ic}(\mc{C}) \cong 
	\begin{cases}
	\Pic(\mc{C}) \qquad &\text{if $t = 0$;} \\
		\pi_0 (\Omega \mc{C})^\times \qquad &\text{if $t = 1$;} \\
		\pi_{t - 1}(\Omega \mc{C}) \qquad &\text{if $t \geq 2$.}\end{cases}\]
	Here $\Omega \mc{C}$ is shorthand for the $\bb{E}_\infty$-ring $\End(\mb{1}_{\mc{C}})$ of endomorphisms of the $\otimes$-unit.
\end{lem}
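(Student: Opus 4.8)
The plan is to exploit the fact that $\mc{P}\mr{ic}(\mc{C})$ is a grouplike $\bb{E}_\infty$-space, which reduces everything to a computation at the single basepoint given by the $\otimes$-unit $\mb{1}_{\mc{C}}$. First I would dispense with the case $t = 0$: by construction $\mc{P}\mr{ic}(\mc{C})$ is the underlying $\infty$-groupoid of the full subcategory of $\otimes$-invertible objects, so its set of path components is exactly the set of equivalence classes of invertible objects, which is $\Pic(\mc{C})$ by definition. For $t \geq 1$ a basepoint is needed. The tensor product makes $\mc{P}\mr{ic}(\mc{C})$ into an $\bb{E}_\infty$-space, and since every object in sight is $\otimes$-invertible this $\bb{E}_\infty$-space is grouplike; consequently translation by any invertible object is an equivalence, all path components are equivalent, and the higher homotopy groups may be computed at the basepoint $\mb{1}_{\mc{C}}$.

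The crucial step is the identification of the loop space. Because $\mc{P}\mr{ic}(\mc{C})$ is a full sub-$\infty$-groupoid of $\mc{C}$ and $\mb{1}_{\mc{C}}$ lies in it, the loop space $\Omega_{\mb{1}_{\mc{C}}}\mc{P}\mr{ic}(\mc{C})$ is the automorphism space $\Aut_{\mc{C}}(\mb{1}_{\mc{C}})$ of self-equivalences of the unit. This automorphism space sits inside the endomorphism space $\Map_{\mc{C}}(\mb{1}_{\mc{C}}, \mb{1}_{\mc{C}}) = \Omega^\infty \End(\mb{1}_{\mc{C}}) = \Omega^\infty \Omega\mc{C}$ as the union of exactly those path components whose homotopy class is invertible. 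A self-map of $\mb{1}_{\mc{C}}$ is an equivalence precisely when its class in $\pi_0 \Omega\mc{C}$ is invertible under composition, which on the unit agrees with the multiplication of the $\bb{E}_\infty$-ring $\Omega\mc{C}$; hence $\Aut_{\mc{C}}(\mb{1}_{\mc{C}})$ is the union of the components lying over the units $(\pi_0 \Omega\mc{C})^\times \subseteq \pi_0 \Omega\mc{C}$. In other words it is $\GL_1$ of the $\bb{E}_\infty$-ring $\Omega\mc{C}$.

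From here the computation is immediate: restricting the infinite loop space $\Omega^\infty \Omega\mc{C}$ to a union of its path components alters only $\pi_0$ and leaves every higher homotopy group unchanged (taking the basepoint to be the identity self-map, which is a unit). Thus $\pi_0 \Aut_{\mc{C}}(\mb{1}_{\mc{C}}) \cong (\pi_0 \Omega\mc{C})^\times$ while $\pi_n \Aut_{\mc{C}}(\mb{1}_{\mc{C}}) \cong \pi_n \Omega\mc{C}$ for $n \geq 1$, and combining this with $\pi_t \mc{P}\mr{ic}(\mc{C}) \cong \pi_{t-1}\Aut_{\mc{C}}(\mb{1}_{\mc{C}})$ for $t \geq 1$ yields the cases $t = 1$ and $t \geq 2$ simultaneously. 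I expect the middle identification to be the only real obstacle: one must check carefully that $\Omega_{\mb{1}_{\mc{C}}}\mc{P}\mr{ic}(\mc{C})$ is genuinely $\GL_1 \Omega\mc{C}$ rather than the full endomorphism space, which comes down to verifying that invertibility of a self-map of $\mb{1}_{\mc{C}}$ is detected on $\pi_0$ and that composition there corresponds to the ring multiplication. Everything else is formal.
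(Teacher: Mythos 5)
Your proposal is correct and follows essentially the same route as the paper, whose proof sketch observes that translation by invertible objects makes all components of $\mc{P}\mr{ic}(\mc{C})$ equivalent, giving the decomposition $\Pic(\mc{C}) \times B\Aut(\mb{1}_{\mc{C}})$; your identification of $\Omega_{\mb{1}_{\mc{C}}}\mc{P}\mr{ic}(\mc{C})$ with $\GL_1(\Omega\mc{C})$ is exactly what makes that sketch yield the stated homotopy groups. The only difference is that you spell out the details (invertibility detected on $\pi_0$, composition agreeing with the ring multiplication) that the paper leaves implicit.
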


\begin{proof}[Proof sketch]
    Tensoring with an $\otimes$-invertible object tautologically describes an automorphism of $\mc{P}\mr{ic}(\mc{C})$. From this we observe that the Picard space decomposes as $\Pic(\mc{C}) \times B \Aut(\mb{1}_{\mc{C}})$. 
\end{proof}

\begin{rmk}
\label{passingtoInd}
    In the literature the stable module $\infty$-category is often defined as the $\Ind$-completion of the aforementioned Verdier quotient. The passage to $\Ind$-completion will not alter any of our results --- in particular, the Picard space does not change. We take a moment to verify this.
\end{rmk}

\begin{proof}
    First recall that, for any (small) stable $\infty$-category $\mc{C}$, the $\Ind$-completion may be modeled by the functor category $\mi{Fun}_{\mr{ex}}(\mc{C}^\op,\mi{Sp})$, the natural map $\mc{C} \to \Ind(\mc{C})$ being the Yoneda embedding. If $X$ is an object in $\mc{C}$ then its image is compact in $\Ind(\mc{C})$, effectively for formal reasons, and the map $\mc{C} \to \Ind(\mc{C})^\omega$ identifies with the idempotent completion of $\mc{C}$ \cite[Lem.~5.4.2.4]{lurie2009higher}.
    
    If $\mc{C}$ is symmetric monoidal then the tensor product of $\Ind(\mc{C})$ is given by the Day convolution on the functor category, and the Yoneda embedding $\mc{C} \to \Ind(\mc{C})$ is a symmetric monoidal functor. The $\otimes$-unit $\mb{1}_{\mc{C}}$ gets mapped to a compact object in $\Ind(\mc{C})$; consequently, any $\otimes$-invertible object in $\Ind(\mc{C})$ is compact as well, and so any $\otimes$-invertible object in $\Ind(\mc{C})$ can in fact be found in $\Ind(\mc{C})^\omega$, the idempotent completion of $\mc{C}$. But $\mc{C} = \mi{Mod}_G^\omega(k) / \mi{Perf}(kG)$ is already idempotent complete, so no new $\otimes$-invertible objects can arise upon $\name{Ind}$-completing.
\end{proof}

\subsection{The limit spectral sequence}

Associating a Picard space to a stable symmetric monoidal $\infty$-category i functorial under exact symmetric monoidal functors, and so we have a functor $\mc{P}\mr{ic} \colon \mathsf{Cat}^\otimes \to \mc{S}_*$. This functor commutes with limits, cf. \cite[Proposition~2.2.3]{ms}, which means that, whenever we have a limit decomposition of $\infty$-categories, we have a corresponding limit decomposition of Picard spaces.

In view of this, it is natural to recall the following. Whenever we have a diagram $\mc{F} \colon \mathcal{I}^\op \to \mc{S}_*$ of pointed spaces, there is a spectral sequence 
\[ E_2^{st} = H^s(\mc{I},\pi_t \mc{F}) \Ra \pi_{t - s}\big( {\varprojlim}_{\mc{I}} \mc{F}\big)\]
whose $E_2$-page is given by the cohomology of the $\mi{Ab}$-valued presheaf $\pi_t\mc{F}$ over the diagram $\mc{I}$. The spectral sequence dates back to the work of Bousfield--Kan, cf. \cite{bousfield1972homotopy}. Unfortunately, the Bousfield--Kan spectral sequence suffers from convergence issues and admits fringe effects, which makes it unreliable from a computational perspective.

We may circumvent this convergence issue by passing to spectra. More precisely, whenever we have a diagram $\mc{F} \colon \mc{I}^\op \to \mi{Sp}$ of \textit{spectra}, there is a completely analogous spectral sequence (see \cite[Section~1.2.2]{lurie2012higher}) but which does not exhibit fringe effects.

Now, if $\mc{C}$ is a symmetric monoidal stable $\infty$-category, then the Picard space is a grouplike $\bb{E}_\infty$-space, and can thus be viewed as a connective spectrum, which we call the \textbf{Picard spectrum} of $\mc{C}$, denoted $\mf{pic}(\mc{C})$. The functor $\mf{pic} \colon \mi{Cat}_{\mr{st}}^\otimes \to \mi{Sp}_{\geq 0}$ commutes with limits as well, and so any limit decomposition of categories yields a limit decomposition of connective Picard spectra. 

It is worth pointing out that the limit spectral sequence lives in \textit{nonconnective} spectra, whereas the Picard spectra are \textit{connective} spectra. This is more than just a superficial difference, since the inclusion functor $\mi{Sp}_{\geq 0} \to \mi{Sp}$ does not commute with limits. However, a limit of connective spectra can be computed by taking a limit in the category of nonconnective spectra and then passing to the connective cover. Consequently, the discrepancy between a limit of connective spectra taken in $\mi{Sp}_{\geq 0}$ and taken in $\mi{Sp}$ is concentrated in negative degrees. In view of our main goal, which is to compute $\pi_0$ of Picard spectra, this discrepancy will never pose issues. 

Let's summarise our findings into a lemma.

\begin{lem}
    \label{picardspectralsequence}
    Let $\mc{C}$ be a symmetric monoidal stable $\infty$-category, and suppose that $\mc{C}$ is realised as the limit over a diagram $\mc{F} \colon \mc{I}^\op \to \mi{Cat}^\otimes_{\mr{st}}$. Then there is a spectral sequence
    \[ H^s(\mc{I},\pi_t \,\pic \,\mc{F}(\blank)\big) \Ra \pi_{t - s}\, {\varprojlim}_{\mc{I}}\, \pic(\mc{C}_i)\,\text{,}\]
    and for $* \geq 0$, $\pi_* \varprojlim_{\mc{I}} \pic(\mc{C}_i)$ may be identified with $\pi_* \pic(\mc{C})$. 
\end{lem}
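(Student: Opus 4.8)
The plan is to assemble the lemma from the three ingredients already developed in the preceding discussion: that $\pic$ carries limits of categories to limits of connective spectra, that a diagram of (possibly nonconnective) spectra carries a well-behaved descent spectral sequence, and that a limit of connective spectra differs from the corresponding nonconnective limit only below degree zero. The crucial design choice is that the spectral sequence is extracted not from the connective Picard spectra directly, but from their limit formed in $\mi{Sp}$; the identification in nonnegative degrees is then what ties this nonconnective object back to $\pic(\mc{C})$.

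First I would write $\mc{C} \simeq \varprojlim_{\mc{I}} \mc{F}(\blank)$ and apply the functor $\pic \colon \mi{Cat}^\otimes_{\mr{st}} \to \mi{Sp}_{\geq 0}$. Since $\pic$ commutes with limits (as recalled above, following \cite[Proposition~2.2.3]{ms}), this yields an equivalence $\pic(\mc{C}) \simeq \varprojlim^{\mi{Sp}_{\geq 0}}_{\mc{I}} \pic(\mc{F}(\blank))$, where the superscript records that the limit is taken in connective spectra. Next, let $L \coloneqq \varprojlim^{\mi{Sp}}_{\mc{I}} \pic(\mc{F}(\blank))$ denote the limit of the same diagram formed in all spectra. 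Because the connective cover is right adjoint to the inclusion $\mi{Sp}_{\geq 0} \hra \mi{Sp}$, it commutes with limits, so a limit of connective spectra computed in $\mi{Sp}_{\geq 0}$ is the connective cover of the limit computed in $\mi{Sp}$; hence $\pic(\mc{C}) \simeq \tau_{\geq 0} L$.

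Now I would apply the limit spectral sequence for a diagram of spectra (\cite[Section~1.2.2]{lurie2012higher}) to $\pic \circ \mc{F} \colon \mc{I}^\op \to \mi{Sp}$. Its $E_2$-page is the diagram cohomology $H^s\big(\mc{I}, \pi_t\, \pic\, \mc{F}(\blank)\big)$, that is, the derived functors of $\varprojlim$ applied to the presheaf $\pi_t\, \pic\, \mc{F}(\blank)$, and it abuts to $\pi_{t-s} L$. Because we work in spectra rather than pointed spaces, this spectral sequence exhibits none of the fringe effects that plagued the Bousfield--Kan version, so its convergence is unproblematic. Finally, since $\pic(\mc{C}) \simeq \tau_{\geq 0} L$ and the connective cover alters only homotopy groups in negative degrees, we have $\pi_* L \cong \pi_* \pic(\mc{C})$ for all $* \geq 0$; substituting this into the abutment yields the asserted spectral sequence together with the stated identification.

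The only genuine subtlety --- and the step I would be most careful about --- is the bookkeeping between connective and nonconnective limits. The spectral sequence lives over $L$, a nonconnective spectrum, and may in principle detect classes in negative total degree that have nothing to do with $\pic(\mc{C})$; the content of the lemma is precisely that these classes are confined to $* < 0$ and therefore do not interfere with the computation of $\pi_0$ (the Picard group) or of the higher homotopy. Establishing $\tau_{\geq 0} L \simeq \pic(\mc{C})$ cleanly --- rather than merely observing the levelwise connectivity of each $\pic(\mc{F}(i))$ --- is the crux, and it rests on the commutation of connective covers with limits noted above.
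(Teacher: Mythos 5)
Your proposal is correct and follows essentially the same route as the paper, which establishes this lemma by exactly the discussion you reconstruct: $\pic$ commutes with limits (citing \cite[Proposition~2.2.3]{ms}), the limit spectral sequence for diagrams of spectra from \cite[Section~1.2.2]{lurie2012higher} applied to the nonconnective limit, and the observation that a limit of connective spectra is the connective cover of the limit taken in $\mi{Sp}$, so the discrepancy is confined to negative degrees. Your adjunction argument for $\tau_{\geq 0}\varprojlim^{\mi{Sp}} \simeq \varprojlim^{\mi{Sp}_{\geq 0}}$ is a correct (and slightly more explicit) justification of the step the paper states without proof.
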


\begin{rmk}
    The Picard spectrum admits a delooping by the Brauer spectrum. Brauer spectra ought to admit descent as well --- cf. for instance \cite[Prop.~3.45]{mathew} --- and so the $(-1)$-line for our limit spectral sequence of Picard spectra reveals information about the Brauer groups as well. We will find that the $(-1)$-line is often significantly more complicated than the nonnegative lines.
\end{rmk}

\subsection{Quillen stratification}

In view of \cref{picardspectralsequence}, it would be interesting to exhibit a limit decomposition for stable module categories, which is what we turn to in this section. 

Let $G$ be a finite group, and let $\mc{A}$ be a collection of subgroups of $G$ satisfying the following properties.
\begin{itemize}[noitemsep, topsep = 0pt]
	\item $\mc{A}$ is closed under finite intersections;
	\item $\mc{A}$ is closed under conjugation by elements of $G$;
	\item every elementary abelian $p$-subgroup of $G$ is contained in a member of $\mc{A}$.
\end{itemize}
For any such collection $\mc{A}$, we define $\mc{O}_{\mc{A}}(G)$ to be the full subcategory of the orbit category $\mc{O}(G)$ spanned by those objects $G / H$ for which $H$ is in $\mc{A}$. We have the following result, which can be found in \cite[Corollary 9.16]{mathew}, and which is effectively a higher-categorical elaboration of Quillen's stratification theory. 

\begin{thm}
	\label{quillen}
	If $G$, $k$ and $\mc{A}$ are as above, then the stable module category of $G$ decomposes as
	\[ \mi{StMod}(kG) \simeq \varprojlim_{G / H \in \mc{O}_{A}(G)^\op} \mi{StMod}(kH)\,\text{.}\]
\end{thm}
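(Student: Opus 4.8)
The plan is to prove the equivalence by descent, realising the orbit-category limit as the totalisation of an Amitsur (\v{C}ech) complex attached to a descendable cover of the unit. The key objects are the permutation modules $k[G/H] = \Ind_H^G k$, viewed as commutative algebra objects in $\StMod(kG)$.

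I would first set up the diagram and the comparison functor. For $H \in \mc{A}$, restriction $\Res^G_H \colon \StMod(kG) \to \StMod(kH)$ is exact and symmetric monoidal, and a morphism $G/H \to G/K$ in $\mc{O}_{\mc{A}}(G)$ --- which witnesses $H$ as subconjugate to $K$ --- induces such a restriction (post-composed with a conjugation equivalence), so that $G/H \mapsto \StMod(kH)$ defines a functor $\mc{O}_{\mc{A}}(G)^\op \to \mi{Cat}^\otimes_{\mr{st}}$. Assembling these gives a canonical comparison functor
\[\Phi \colon \StMod(kG) \longrightarrow \varprojlim_{G/H \in \mc{O}_{\mc{A}}(G)^\op} \StMod(kH)\,\text{,}\]
and the theorem is the assertion that $\Phi$ is an equivalence. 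To analyse the target, I would identify each factor with a module category: since induction and coinduction agree for finite groups, $k[G/H] \simeq \CoInd_H^G k$ is a commutative algebra, and a projection-formula argument gives $\Mod_{k[G/H]}(\StMod(kG)) \simeq \StMod(kH)$, with extension of scalars corresponding to $\Res^G_H$. The Mackey double-coset formula then yields $k[G/H] \otimes k[G/K] \simeq \bigoplus_{g} k[G/(H \cap {}^g K)]$, and here the three closure hypotheses on $\mc{A}$ are precisely what ensure that every subgroup occurring remains in $\mc{A}$. Consequently the Amitsur complex of the algebra $A \coloneqq \prod_{H \in \mc{A}} k[G/H]$ is, in each cosimplicial degree, a finite product of module categories over the $k[G/L]$ with $L \in \mc{A}$, and its totalisation reassembles exactly into the limit over $\mc{O}_{\mc{A}}(G)^\op$ in the statement. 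This reduces the theorem to showing that $A$ is descendable in the sense of Mathew, for then the descent machinery --- a form of the Barr--Beck--Lurie theorem --- identifies $\StMod(kG)$ with the totalisation of the Amitsur complex.

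The main obstacle is establishing descendability, i.e. that the unit $k$ lies in the thick $\otimes$-ideal generated by $\{k[G/H] : H \in \mc{A}\}$, built from these generators in boundedly many cofibre sequences. This is the homotopical form of Quillen stratification. The qualitative input is Chouinard's theorem: a $kG$-module is projective, hence trivial in $\StMod(kG)$, if and only if its restriction to every elementary abelian $p$-subgroup is projective; combined with the hypothesis that every such subgroup lies in a member of $\mc{A}$, this makes the family $\{\Res^G_H\}_{H \in \mc{A}}$ jointly conservative. The sharper, \emph{quantitative} statement --- that $k$ is generated from the $k[G/H]$ in a uniformly bounded number of steps --- is where the real work lies, and follows from the finite generation of the cohomology ring together with the nilpotence of the relevant Ext-classes outside the elementary abelian locus. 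Granting descendability, $\Phi$ is conservative and preserves the totalisations computing the limit, so Barr--Beck--Lurie shows $\Phi$ is an equivalence, completing the argument.
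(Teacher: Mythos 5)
Your proposal is correct and follows essentially the same route as the paper: the paper does not prove this theorem itself but cites \cite[Corollary 9.16]{mathew}, whose proof is precisely the descent argument you outline (Balmer's identification $\mathsf{StMod}(kH) \simeq \Mod_{\mathsf{StMod}(kG)}\big(k[G/H]\big)$, descendability of $\prod_{H \in \mc{A}} k[G/H]$, and reassembly of the Amitsur/cobar totalisation into the orbit-category limit), and the paper sketches this very argument in \cref{rmk:abstractdescent}. The two steps you defer --- the cofinality argument identifying the totalisation with the limit over $\mc{O}_{\mc{A}}(G)^\op$, and the quantitative descendability input (Mathew's Proposition 9.13, resting on Quillen stratification) --- are exactly the steps the paper likewise delegates to \cite{mathew}.
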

The functoriality is as follows. One can identify $\mi{StMod}(kH)$ as the category of modules objects over the commutative algebra object $k(G/H)$ in $\mi{StMod}(kG)$, as was proved by Balmer \cite{balmer} (cf. \cite{MNNnilpotence2015-jf} and \cref{rmk:abstractdescent}) --- that is,
\[\mathsf{StMod}(kH) \simeq \Mod_{\mathsf{StMod}(kG)}\big(k(G/H)\big)\,\text{,}
\]
and there is an obvious functor $\mc{O}_{\mc{A}}(G) \to \mi{CAlg}\big(\mi{StMod}(kG)\big)$ given by $G/H \mapsto \mi{StMod}(kH)$. 
This limit decomposition arises from restriction and coinduction. 


As discussed in the previous subsection, any limit decomposition of stable symmetric monoidal $\infty$-categories yields a corresponding decomposition of Picard spectra
\begin{equation} 
	\label{picdecomposition}
	\mf{pic} \,\mi{StMod}(kG) \simeq \varprojlim_{G / H \in \mc{O}_{A}(G)^\op} \mf{pic} \, \mi{StMod}(kH)\,\text{,}
\end{equation}
and hence a corresponding limit spectral sequence
\begin{equation}
    \label{picspectralsequence}
    E_2^{st} = H^s\big(\mc{O}_{\mc{A}}(G),\pi_t \mf{pic}\StMod(kH)\big) \Ra \pi_{t - s}\big(\mf{pic}\,{\StMod}(kG)\big)\,\text{.}
\end{equation}
To evaluate this spectral sequence, it is necessary to study both the objects and the differentials of this spectral sequence, which we shall do for the remainder of this section.

To understand the behaviour of the differentials in the spectral sequence, we make use of the computational tools developed in \cite[Part II]{ms}. We summarise their results here. We start with a symmetric monoidal stable $\infty$-category $\mc{C}$.

\begin{lem}[{\cite[Corollary 5.2.3]{ms}}]
	\label{functorialequivalence}
	One has a functorial equivalence $\tau_{[t, 2t-1]} \Aut(\mb{1}_{\mc{C}}) \simeq \tau_{[t, 2t-1]} \mb{1}_{\mc{C}}$, where $\tau_{[\blank,\blank]}$ denotes the truncation functor with homotopy groups in the specified range. This induces a functorial equivalence $\tau_{[t+1,2t]} \mf{pic}\,\mc{C} \simeq \Sigma \tau_{[t,2t-1]} \Omega \mc{C}$.
\end{lem}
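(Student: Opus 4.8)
The plan is to trade the two assertions for a single comparison between the units spectrum and the underlying spectrum of the endomorphism $\bb{E}_\infty$-ring $R \coloneqq \Omega\mc{C} = \End(\mb{1}_{\mc{C}})$, the symbol $\mb{1}_{\mc{C}}$ on the right-hand side being read as this ring spectrum $R$. By definition the automorphism spectrum $\Aut(\mb{1}_{\mc{C}})$ is the spectrum of units $\gl_1(R)$, whose homotopy is $\pi_0\gl_1(R) = (\pi_0 R)^\times$ and $\pi_n\gl_1(R) = \pi_n R$ for $n \geq 1$. Hence for $t \geq 1$ the truncation $\tau_{[t,2t-1]}\Aut(\mb{1}_{\mc{C}})$ only involves homotopy in degrees $\geq t \geq 1$, where the groups already agree with those of $R$ on the nose; the entire content of the first equivalence is therefore that the $k$-invariants of $\gl_1(R)$ and of $R$ coincide throughout the window $[t,2t-1]$, naturally in $R$ and hence in $\mc{C}$.

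The heart of the argument is that the multiplicative and additive structures differ only by \emph{quadratic} terms. On the connected cover, an element of the identity component of $GL_1(R)$ has the form $1+x$ with $x$ in the $t$-connected ideal $\tau_{\geq t}R$, and $(1+x)(1+y) = 1 + (x+y) + xy$ differs from the sum only by $xy \in \tau_{\geq 2t}R$. More generally, every deviation of the $\bb{E}_\infty$-structure on $\gl_1(R)$ from the additive $\bb{E}_\infty$-structure on $R$ is carried by a genuinely nonlinear operation, which multiplies at least two inputs of degree $\geq t$ and therefore lands in degrees $\geq 2t$; the linear part is the identity. Consequently, after applying $\tau_{\leq 2t-1}$ the two structures become indistinguishable and one obtains $\tau_{[t,2t-1]}\gl_1(R) \simeq \tau_{[t,2t-1]}R$. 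To make this precise I would invoke the structural comparison between $\gl_1$ and the identity functor on spectra from \cite[Part~II]{ms}, producing a natural map (or zig-zag) whose fiber is $(2t-1)$-connected — the connectivity estimate being exactly the quadratic vanishing just described — and read off the equivalence on truncations; naturality of this map delivers the functoriality.

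For the induced statement I would deloop. The Picard spectrum is a delooping of the units spectrum: at the space level $\mc{P}\mr{ic}(\mc{C})$ splits off $B\Aut(\mb{1}_{\mc{C}})$, so $\tau_{\geq 1}\pic(\mc{C}) \simeq \Sigma\,\Aut(\mb{1}_{\mc{C}}) = \Sigma\,\gl_1(R)$ as connective spectra, which is consistent with $\pi_n\pic(\mc{C}) \cong \pi_{n-1}\gl_1(R)$ for $n \geq 1$ recorded in \cref{homotopygroupsofPic}. Since $t \geq 1$ the range $[t+1,2t]$ lies in degrees $\geq 2$, safely above the exceptional $\pi_0,\pi_1$, so $\tau_{[t+1,2t]}\pic(\mc{C}) \simeq \Sigma\,\tau_{[t,2t-1]}\gl_1(R)$. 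Combining this with the first part yields $\tau_{[t+1,2t]}\pic(\mc{C}) \simeq \Sigma\,\tau_{[t,2t-1]}R = \Sigma\,\tau_{[t,2t-1]}\Omega\mc{C}$, the suspension accounting for the single delooping.

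The main obstacle is upgrading the informal slogan ``products are quadratic, hence negligible in the window'' into a genuine functorial equivalence of truncated spectra rather than a mere isomorphism of homotopy groups. Concretely, one must construct the comparison coherently, respecting the full $\bb{E}_\infty$-structure and not only the binary multiplication, and pin down the precise connectivity $2t$ of its fiber; this is where the upper endpoint $2t-1$ originates and where an off-by-one slip is easiest to commit. I expect to lean on the results about $\gl_1$ and the associated natural transformation already established in \cite{ms}, so that the remaining work is bookkeeping: tracking connectivity through the truncation and through the one delooping while keeping everything natural in $\mc{C}$, which is the feature the descent spectral sequence ultimately requires.
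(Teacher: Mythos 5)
The paper does not actually prove this lemma --- it is quoted verbatim from \cite[Cor.~5.2.3]{ms} --- so your attempt has to be measured against Mathew--Stojanoska's own argument. Your second half is correct and is exactly how the Picard statement follows: since $\Omega\,\pic(\mc{C}) \simeq \gl_1\big(\End(\mb{1}_{\mc{C}})\big)$, one has $\tau_{\geq 1}\pic(\mc{C}) \simeq \Sigma\,\gl_1(\Omega\mc{C})$, and the window $[t+1,2t]$ lies in degrees $\geq 2$, so the suspension statement is formal given the first equivalence. Likewise, your identification of the homotopy groups and your ``the deviation is quadratic, hence lands in degrees $\geq 2t$'' heuristic are the right ingredients.

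The genuine gap is in the step you yourself flag as the main obstacle. You propose to close it by invoking ``the structural comparison between $\gl_1$ and the identity functor on spectra from \cite[Part~II]{ms}, producing a natural map (or zig-zag) whose fiber is $(2t-1)$-connected.'' But that structural comparison \emph{is} Corollary 5.2.3, i.e.\ the statement being proven, so the appeal is circular; and the mechanism you posit is not what \cite{ms} supplies: there is no natural map of \emph{spectra} from $\gl_1(R)$ to $R$ (nor between their $t$-connective covers) inducing the comparison --- this is precisely why the equivalence exists only after truncation. The missing idea is the passage through pointed \emph{spaces}. Translation by the unit, $x \mapsto 1+x$, is a natural equivalence of pointed spaces $\Omega^\infty\tau_{\geq t}R \simeq \Omega^\infty\tau_{\geq t}\gl_1(R)$ for $t \geq 1$, and one upgrades it to spectra using that $\Omega^\infty$ is fully faithful on spectra with homotopy concentrated in $[t,2t-1]$; equivalently, one runs the natural zig-zag
\[
\tau_{\geq t}\gl_1(R) \la \Sigma^\infty\Omega^\infty\tau_{\geq t}\gl_1(R) \xra{\ \simeq\ } \Sigma^\infty\Omega^\infty\tau_{\geq t}R \ra \tau_{\geq t}R\,\text{,}
\]
where the middle map is $\Sigma^\infty$ of the translation equivalence and the outer counit maps have $2t$-connective fibers, because the first nontrivial Goodwillie layer of $\Sigma^\infty\Omega^\infty$ is the quadratic construction $(X\wedge X)_{h\Sigma_2}$, which is $(2t-1)$-connected when $X$ is; hence all three maps become equivalences after $\tau_{\leq 2t-1}$. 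Your $(1+x)(1+y) = 1 + x + y + xy$ observation is exactly this connectivity estimate, but an estimate needs a map to be applied to, and the only natural candidate lives at the space level. Without routing the comparison through $\Omega^\infty$ and $\Sigma^\infty$ (or, equivalently, through the stable-range full faithfulness of $\Omega^\infty$), the quadratic vanishing has nothing to act on, and the proposal reduces to citing the result it is meant to prove.
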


Now, $\Omega$, too, commutes with limits, yielding a decomposition of $\Omega \mi{StMod}(kG)$ analogous to \cref{picdecomposition}, and hence a limit spectral sequence analogous to \cref{picspectralsequence}. \cref{functorialequivalence} then allows one to import differentials from the limit spectral sequence for $\Omega$ into the limit spectral sequence for Picard spectra. 

\begin{thm}[{\cite[Comparison Tool 5.2.4]{ms}}]
	\label{comparisontool}
	Let $\mc{F} \colon \mc{I} \to \mi{Cat}_{\mr{st}}^\otimes$ be a diagram of symmetric monoidal stable $\infty$-categories. Consider the limit spectral sequences
	\begin{align*}
	E_2^{st}(\mf{pic}) = H^s\big(\mc{I}, \pi_t \mf{pic}\,\mc{F}(\blank)\big) &\Ra \pi_{t - s} \mf{pic}\big({\varprojlim}_{\mc{I}} \mc{F}\big) \,\text{,} \\ 
	E_2^{st}(\Omega) = H^s\big(\mc{I}, \pi_t \Omega\,\mc{F}(\blank)\big) &\Ra  \pi_{t - s} \Omega\big({\varprojlim}_{\mc{I}} \mc{F}\big)\,\text{.}\end{align*}
	Then we have an equality of differentials $d_r^{st}(\mf{pic}) = d_r^{s,t-1}(\Omega)$ for all $(s,t)$ such that either $t - s > 0$ or $t \geq r + 1$.
\end{thm}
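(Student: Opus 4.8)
The plan is to reduce the comparison of differentials to the functorial equivalence of \cref{functorialequivalence}, using the general principle that a differential in a limit spectral sequence is a functorial invariant of a bounded Postnikov window of the diagram. Concretely, I would first record that the differential $d_r^{st} \colon E_r^{st} \to E_r^{s+r,\,t+r-1}$ of the limit spectral sequence of a diagram $\mc{F} \colon \mc{I} \to \Sp$ depends, naturally in $\mc{F}$, only on the truncated diagram $\tau_{[t,\,t+r-1]} \mc{F}(\blank)$: the $r$ Postnikov layers in internal degrees $t, t+1, \dots, t+r-1$ carry precisely the attaching-map data that computes $d_r$. This is the naturality statement underlying the Bousfield--Kan/descent spectral sequence of \cref{picardspectralsequence}, and it has the immediate consequence that a functorial equivalence between the windowed truncations of two diagrams forces the associated $d_r$'s to agree.

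Next I would feed the functorial equivalence of \cref{functorialequivalence} into this principle. Replacing $t$ by $t-1$ there gives a functorial equivalence $\tau_{[t,\,2t-2]} \pic\,\mc{F}(\blank) \simeq \Sigma\, \tau_{[t-1,\,2t-3]} \Omega\,\mc{F}(\blank)$. Whenever $t \geq r+1$ the window $[t,\,t+r-1]$ of the differential $d_r$ is contained in $[t,\,2t-2]$, so restricting the equivalence yields $\tau_{[t,\,t+r-1]}\pic\,\mc{F} \simeq \Sigma\,\tau_{[t-1,\,t+r-2]}\Omega\,\mc{F}$. Since suspension shifts the entire limit spectral sequence by one in the internal degree --- one has $E_2^{st}(\Sigma X) = H^s(\mc{I},\pi_{t-1}X) = E_2^{s,\,t-1}(X)$, compatibly with differentials --- the $d_r$ extracted from the left-hand side is $d_r^{st}(\pic)$ and that extracted from the right-hand side is $d_r^{s,\,t-1}(\Omega)$. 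This establishes the equality on the range $t \geq r+1$.

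The remaining range $t - s > 0$ is where I expect the real work to lie. Here the window $[t,\,t+r-1]$ can extend past $2t-2$, so the single band equivalence of \cref{functorialequivalence} no longer covers it, and the bands for different shifts need not patch into a global equivalence of connective covers --- this failure is exactly what separates $\pic\,\mc{C}$ from $\Sigma\Omega\mc{C}$. The starting observation is that, by \cref{homotopygroupsofPic}, the two spectra have canonically identified homotopy groups in every degree $\geq 2$, so the $\pic$- and $\Omega$-spectral sequences share identical $E_2$-pages throughout $t \geq 2$; any discrepancy between their differentials must therefore be caused by a long-range $k$-invariant living above internal degree $2t-2$. The main obstacle is to show that such a correction term cannot contribute to a differential whose source lies in a strictly positive stem. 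I would approach this by tracking the obstruction-theoretic definition of $d_r$ against the fiber sequence comparing $\pic\,\mc{C}$ with $\Sigma\Omega\mc{C}$, and arguing that the correction is supported on the region of non-positive total degree --- precisely the place where the genuinely nonlinear differentials appear (cf. \Cref{rmk:q8}). Combining the two ranges then gives the stated hypothesis that either $t - s > 0$ or $t \geq r+1$.
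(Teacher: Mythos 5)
The paper never proves this statement: it is imported verbatim from Mathew--Stojanoska \cite[Comparison Tool 5.2.4]{ms}, so the only benchmark is their proof. For the range $t \geq r+1$ your argument is correct and is essentially theirs: the differential $d_r^{s,t}$ of a limit spectral sequence is controlled by the windowed truncation $\tau_{[t,\,t+r-1]}\mc{F}$, the equivalence of \cref{functorialequivalence} covers that window exactly when $t \geq r+1$, and suspension shifts the internal degree by one. Your bookkeeping checks out. One caveat: ``$d_r^{s,t}$ depends only on $\tau_{[t,\,t+r-1]}\mc{F}$'' needs careful phrasing, since $E_r^{s,t}$ itself is \emph{not} a functor of the window (whether a class survives to $E_r$ is affected by differentials entering from internal degrees below $t$); the precise statement is that the comparison maps induced by $\tau_{\geq t}\mc{F} \to \mc{F}$ and $\tau_{\geq t}\mc{F} \to \tau_{[t,\,t+r-1]}\mc{F}$ are surjective on source bidegrees and injective on target bidegrees, so that $d_r^{s,t}(\mc{F})$ is determined by the windowed diagram. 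This is how Mathew--Stojanoska set it up, and your sketch is compatible with it.

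The genuine gap is the range $t-s>0$, and it is not a peripheral case: it is precisely the clause this paper invokes when it imports $d_2$-differentials on positive-stem classes with $t \leq r$ (for instance $d_2^{0,2}$ and $d_2^{1,2}$ in \cref{E2limitCpn} and \cref{E2limitQ8}). You concede you do not prove it, and the device your sketch leans on --- ``the fiber sequence comparing $\pic\,\mc{C}$ with $\Sigma\Omega\mc{C}$'' --- does not exist. There is no natural map (or zigzag) of spectra between $\pic\,\mc{C}$ and $\Sigma\Omega\mc{C}$ realising the abstract isomorphisms of \cref{homotopygroupsofPic}; the only natural comparisons are the windowed equivalences of \cref{functorialequivalence}, together with $\tau_{\geq 1}\pic\,\mc{C} \simeq \Sigma\,\gl_1(\Omega\mc{C})$, which merely trades $\Omega\mc{C}$ for $\gl_1(\Omega\mc{C})$ and leaves the multiplicative-versus-additive discrepancy intact. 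Indeed, if a global natural comparison existed, the two spectral sequences would have equal differentials in \emph{all} bidegrees, contradicting \cref{unstabledifferential} and the paper's own computations: in \cref{d2limitQ8}, and likewise in the $C_{2^n}$ case, one has $d_2^{22}(\pic)(\lambda) = \lambda + \lambda^2 \neq \lambda = d_2^{21}(\Omega)(\lambda)$ at stem zero. So any argument of the shape you propose would prove too much unless it explains structurally why the correction is confined to stems $\leq 0$, which is exactly the content you assert but do not supply. In \cite{ms} this half is handled by a separate argument rather than by window bookkeeping --- the essential points being that in positive total degrees the spectrum-level limit spectral sequences can be compared through the underlying diagrams of spaces, where the identification of unit components (translation by the unit) is natural with no metastable restriction, and that any natural correction to $d_r$ arising from the discrepancy is at least quadratic, so a bidegree count forces it into differentials whose source lies in stem $\leq 0$, the edge case being the $x^2$ of \cref{unstabledifferential}. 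Without an argument of this kind, the second half of the stated range remains unproven in your proposal.
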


As it turns out, the spectral sequence for $\Omega$ is easier to understand, because the endomorphism spectra are $\bb{E}_\infty$-rings, which imbue the limit spectral sequence with a multiplicative structure. We will make frequent use of this advantage throughout our computations.

Let us now take a closer look at the endomorphism spectrum $\Omega \StMod(kG)$ of the unit. This spectrum can be described explicitly, but before we are able to give the description, we need to recall some relevant definitions.

If $X$ is a spectrum admitting a $G$-action, then we can capture the $G$-action as a functor $BG \to X$. We then associate to $X$ its homotopy orbits $X_{hG}$ and homotopy fixed points $X^{hG}$, defined as $\varinjlim_{BG} X$ and $\varprojlim_{BG} X$, respectively. There is a norm map $X_{hG} \to X^{hG}$ whose cofibre is called the Tate construction, denoted $X^{tG}$.

As we've seen, any limit of spectra has an associated limit spectral sequence. Applied to $X^{hG}$, we obtain what is commonly called the homotopy fixed point spectral sequence (HFPSS). A dual spectral sequence, called the homotopy orbit spectral sequence, exists for $X_{hG}$, as does a four-quadrant spectral sequence for $X^{tG}$, called the Tate spectral sequence, which we'll encounter in \cref{faithfulgaloisextensions}.

If $X$ is the Eilenberg--MacLane spectrum of a $G$-module $M$, then the homotopy groups of $M^{tG}$ carry classical arithmetic information. Specifically, the homotopy groups of $M_{hG}$ are given by
\[ \pi_t (M_{hG}) = \begin{cases}
    H_{t}(G;M) \qquad &\text{if $t \geq 0$,} \\
    0 \qquad &\text{otherwise,} \end{cases}\]
and the homotopy groups of $M^{hG}$  are given by
\[ \pi_{-t} (M^{hG}) = \begin{cases}
    H^{t}(G;M) \qquad &\text{if $t \geq 0$,} \\
    0 \qquad &\text{otherwise.} \end{cases}\]
Notice that $\pi_0(M_{hG})$ is the classical set $M_G$ of $G$-orbits while $\pi_0(M^{hG})$ is the set $M^G$ of fixed points. The norm map $M_{hG} \to M^{hG}$ is necessarily zero on nonzero homotopy groups, while the map on $\pi_0$ is the classical norm map $N \colon M_{hG} \to M^{hG}$ sending an orbit $\{gm\}$ to the sum $\sum_g gm$. Through the long exact sequence of homotopy groups associated to the fibre sequence $M_{hG} \to M^{hG} \to M^{tG}$, one then infers that
\[ \pi_{t}(M^{tG}) \cong \wh{H}^{-t}(G;M)\,\text{,}\]
where $\wh{H}$ denotes Tate cohomology, which is classically defined as
\[\wh{H}^{*}(G;M) = \begin{cases}
		H^*(G;M) \qquad &\text{if $* \geq 1$;} \\
		\Ck N \qquad &\text{if $* = 0$;} \\
		\K N \qquad &\text{if $* = -1$;} \\
		H_{-* - 1}(G;M) \qquad &\text{if $* \geq -2$.}
\end{cases}\]
If $X$ is an $\bb{E}_\infty$-ring, then so is $X^{tG}$. In particular, if $R$ is a classical ring with a $G$-action, then $\pi_* R^{tG}$ admits a cup product, which coincides with the ring structure on Tate cohomology. We refer the reader to \cref{appendixtate} for some explicit computations of Tate cohomology rings.

\begin{lem}
\label{omegaistate}
    There exists an equivalence of $\bb{E}_\infty$-rings
    \[ \Omega \StMod(kG) \simeq k^{tG}\,\text{,}\]
    where the $G$-action on $k$ is taken to be the trivial one.
\end{lem}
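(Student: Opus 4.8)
The plan is to compute the endomorphism $\bb{E}_\infty$-ring $\Omega\StMod(kG) = \End_{\StMod(kG)}(\mb{1})$ directly, using that the $\otimes$-unit of $\StMod(kG)$ is the image of the trivial module $k$. First I would set up the unstable picture: write $\D(kG) \simeq \mi{Fun}(BG,\Mod_k)$ for the derived $\infty$-category of $k$-modules carrying a $G$-action, with the levelwise tensor over $k$ (and diagonal action) as its symmetric monoidal structure and the constant functor $k$ as unit. In this model $\End_{\D(kG)}(\mb{1}) \simeq k^{hG}$, and its homotopy groups $\pi_{-s}k^{hG} \cong H^s(G;k)$ recover the group cohomology ring, with the cup product encoded by the $\bb{E}_\infty$-structure, exactly as predicted by the homotopy fixed point spectral sequence above.

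Next I would pass from $\D(kG)$ to $\StMod(kG)$, obtained (after $\Ind$-completion, cf.\ \cref{passingtoInd}) as the Verdier quotient annihilating the perfect complexes, i.e.\ the projective modules (equivalently the induced modules). Since $k$ is compact, $\End_{\StMod(kG)}(k)$ is already computed in the quotient of compact objects, where morphisms out of $k$ are obtained by inverting maps with perfect cofibre; this is precisely the passage to complete resolutions, so on homotopy one recovers Tate cohomology, $\pi_{-n}\Omega\StMod(kG) \cong \wh{\Ext}^n_{kG}(k,k) \cong \wh{H}^n(G;k)$, matching $\pi_{-n}k^{tG} \cong \wh{H}^n(G;k)$ from the computation above, and multiplicatively matching the Tate cohomology ring. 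This identifies the two sides as graded rings.

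To promote this to an equivalence of $\bb{E}_\infty$-rings I would exploit the norm cofibre sequence $k_{hG} \xra{\mr{Nm}} k^{hG} \to k^{tG}$. The symmetric monoidal quotient functor $\D(kG) \to \StMod(kG)$ induces an $\bb{E}_\infty$-map $k^{hG} = \Omega\D(kG) \to \Omega\StMod(kG)$, and the maps out of $k$ that vanish in $\StMod(kG)$ — those factoring through projectives — are exactly the image of the transfer, namely of $\mr{Nm}$. I would therefore identify $\Omega\StMod(kG)$ with $\mr{cofib}(\mr{Nm}) = k^{tG}$. The requisite multiplicative structure is supplied by the Tate construction $(\blank)^{tG}\colon \mi{Fun}(BG,\Mod_k) \to \mi{Sp}$, which is lax symmetric monoidal, sends the unit $k$ to the $\bb{E}_\infty$-ring $k^{tG}$, and annihilates induced objects; by the universal property of the quotient it factors through $\StMod(kG)$, and comparing the two $\bb{E}_\infty$-$k^{hG}$-algebra structures on the cofibre of the norm yields the desired equivalence.

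The main obstacle I anticipate is this last upgrade from an identification of homotopy rings to a genuine equivalence of $\bb{E}_\infty$-rings. The quotient only manifestly pins down the underlying graded ring as Tate cohomology, and one must verify that the identification of $\Omega\StMod(kG)$ with the cofibre of the norm is compatible with the multiplications — that the maps factoring through projectives assemble, at the level of mapping spectra rather than merely on $\pi_0$, into the image of the transfer. Securing this compatibility is exactly where the lax symmetric monoidality of the Tate construction is indispensable.
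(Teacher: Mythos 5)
The paper never actually proves \cref{omegaistate}: the lemma is asserted as a known fact, supported only by the remark that on homotopy groups it reflects the classical isomorphism $\underline{\Hom}_{kG}(\Omega^t k,k)\cong\wh{H}^{t}(G;k)$. So there is no in-paper argument to compare yours against; your proposal supplies a proof where the authors defer to folklore. What you outline is, in substance, the standard argument from the literature (it is how Nikolaus--Scholze set up the Tate construction, and how Mathew treats $\StMod(kG)$): identify $\D(kG)\simeq\mi{Fun}(BG,\Mod_k)$ so that $\End(\mb{1})\simeq k^{hG}$; observe that the projectives form a thick $\otimes$-ideal, so the Verdier quotient is symmetric monoidal and the quotient functor yields an $\bb{E}_\infty$-map $k^{hG}\to\Omega\StMod(kG)$; and use that $(\blank)^{tG}$ is exact, lax symmetric monoidal, and annihilates induced objects to descend it to the quotient and compare. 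Restricting to compact objects, so that the failure of $(\blank)^{tG}$ to preserve filtered colimits is harmless, is also the right move and is consistent with \cref{passingtoInd}.

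The one step that, as written, is not yet a proof --- and which you flag, though the real issue is slightly different from how you describe it --- is the passage from ``the maps out of $k$ killed by the quotient are exactly the image of the transfer'' to ``$\Omega\StMod(kG)\simeq\mr{cofib}(\mr{Nm})$''. The first statement concerns elements of homotopy groups, and no elementwise bookkeeping can produce a cofibre sequence of spectra from it: the mapping spectrum in a Verdier quotient is not a cokernel but a filtered colimit, $\Hom_{\StMod(kG)}(k,k)\simeq\mr{colim}\,\Hom_{\D(kG)}(k,Y)$, taken over maps $k\to Y$ whose fibre lies in the thick ideal of projectives. (This is visible already additively: in negative cohomological degrees $\wh{H}^{*}(G;k)$ consists of classes that are neither images nor quotients of anything in $H^*(G;k)$, so the identification cannot be of quotient type.) The repair is a cofinality argument: the stages of a complete resolution of $k$ --- equivalently the objects $\wt{EG}^{(n)}\otimes k$ attached to the skeletal filtration of $EG$ --- form a cofinal family of such $Y$; each $\Hom_{\D(kG)}(k,Y_n)\simeq Y_n^{hG}$; and since the successive cofibres are induced with increasing connectivity, the colimit is exactly $k^{tG}$, compatibly with the map from $k^{hG}$. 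Granting that, your concluding move --- invoking the universal property of $(\blank)^{tG}$ as the initial lax symmetric monoidal functor under $(\blank)^{hG}$ annihilating induced objects, so that the two $\bb{E}_\infty$-structures on the cofibre of the norm are identified --- does finish the multiplicative comparison, since an $\bb{E}_\infty$-map that is an underlying equivalence is an equivalence of $\bb{E}_\infty$-rings. So your plan is sound and correctly locates the crux, but closing it requires the colimit description of quotient mapping spectra rather than a finer analysis of which maps factor through projectives.
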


On the level of homotopy groups, this is reflected by the classical fact that, in the triangulated stable module category, 
\[\text{\underline{Hom}}_{kG}(\Omega^t k,k) \cong \wh{H}^{t}(G;k)\,\text{.}\]

\begin{rmk}
    If $G$ is in fact a $p$-group, then $\mi{StMod}(kG)$ is in fact equivalent to $\mi{Mod}(k^{tG})$. We digress for a while to verify that this is the case.
\end{rmk}

\begin{proof}
By the Schwede--Shipley theorem (\cite[Theorem~7.1.2.1]{lurie2012higher}), it suffices to show that the category has a compact generator, which we claim is $k$. Let us first see why $k$ is nontrivial object in $\mi{Mod}_G^\omega(k)/\mi{Perf}(kG)$ to begin with. This is true so long as $k$ isn't a perfect complex. Indeed this is the case: $k$ is not compact on $\mi{Mod}^\omega_G(k)$, effectively because $H^{-*}(G;k) = \pi_* \name{Map}_{\mi{Mod}^\omega_G(k)}(k,k)$ fails to be bounded in modular characteristic.

In general, if $\mc{C}$ is an $\infty$-category, then any object in $\mc{C}$ is compact in $\name{Ind}(\mc{C})$ for formal reasons, so $k$ defines a compact object. So then why does $k$ generate $\mi{StMod}\big(kG\big)$? By definition, one must verify that, for any object~$M$, if $\Hom(k,\Sigma^i M) = 0$ for all $i \in \bb{Z}$, then $M \simeq 0$. We may represent the $\Sigma^i M$ as $kG$-representations, so that the existence of a nonzero map $k \to \Sigma^i M$ may be identified with the existence of a fixed vector in the $kG$-representation $\Sigma^i M$. However, such nonzero maps always exist because $kG$-representations have a nonzero fixed vector. To see this, reduce to the case $k = \bb{F}_p$ by inspecting the underlying $\bb{F}_p$-vector space, and then apply a counting argument. This concludes the claim.
\end{proof}

Let us go back to \cref{picspectralsequence} again. In all examples of interest, we will take $\mc{A}$ to be a family of elementary abelian subgroups. In view of \cref{homotopygroupsofPic} and \cref{omegaistate}, the higher homotopy groups of $\pic \,\StMod(kH)$ are well-understood: they are given by Tate cohomology groups of elementary abeian groups. But what about the $0$-th homotopy groups of $\mf{pic}\big({\StMod}(kH)\big)$, i.e. the Picard groups? The endotrivial modules of elementary abelian groups are understood via a result by a theorem of Dade, which states that the Picard group is necessarily generated by the suspension of the unit. We will take the computation of the Picard group for elementary abelians as a starting point, working our way up from there. Let's capture it as a lemma.

\begin{lem}[\cite{dade}]
	\label{reference}
	The Picard group of the stable module category of elementary abelian groups is described as follows:
	\[ \pi_0 \,\mf{pic} \, \mi{StMod}\big(kC_p^n\big) \cong \begin{cases}
		0\qquad &\text{if $p = 2$ and $n = 1$;} \\
		C_2 \qquad &\text{if $p$ is odd and $n = 1$;} \\
		\bb{Z} \qquad &\text{if $n \geq 2$.}\end{cases}\]
\end{lem}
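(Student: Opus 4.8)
The plan is to reduce the statement to classical facts about endotrivial modules and then analyse a single homomorphism in each case. By \cref{homotopygroupsofPic} together with the identification $\Pic(\StMod(kG))\cong T(G)$ proved above, we have $\pi_0\,\pic\,\StMod(kC_p^n)\cong T(C_p^n)$, so it suffices to compute the group of endotrivial modules. In every regime I would study the canonical map
\[
\bb{Z}\longrightarrow T(C_p^n)\,\text{,}\qquad m\longmapsto[\Omega^m k]\,\text{,}
\]
which is a well-defined group homomorphism, since $\Omega^m k$ is endotrivial with inverse $\Omega^{-m}k$ and $\Omega^m k\otimes\Omega^{m'}k\cong\Omega^{m+m'}k\oplus\proj$. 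The whole problem is to determine its kernel and cokernel: the kernel records the $\Omega$-periodicity of $k$, while surjectivity is the assertion that every endotrivial module is stably a suspension of the trivial module.

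For the rank-one cases I would argue by hand using the explicit structure of the group algebra. Since $kC_p\cong k[t]/(t^p)$ is a local Frobenius algebra, its finitely generated indecomposable modules are exactly the Jordan blocks $J_i\coloneqq k[t]/(t^i)$ for $1\le i\le p$, with $J_p=kC_p$ the unique indecomposable projective; this classification is valid over any field. A short computation identifies $\Omega k$ with the augmentation ideal $J_{p-1}$ and shows $\Omega^2 k\cong k$ stably. When $p=2$ one has $\Omega k=J_1=k$, so the homomorphism above is zero and, as the only nonprojective indecomposable is $k$ itself, $T(C_2)=0$. When $p$ is odd, $[\Omega k]$ has order exactly two; to establish surjectivity I would use the Clebsch--Gordan decomposition of the tensor products $J_i\otimes J_j$ to check that $\End_k(J_i)$ contains $k$ as a nonprojective summand only for $i\in\{1,p-1\}$, so that the only endotrivial indecomposables are $k$ and $J_{p-1}=\Omega k$, giving $T(C_p)\cong C_2$.

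For rank $n\ge 2$ I would treat injectivity and surjectivity separately. Injectivity of the homomorphism is the statement that $\Omega^m k\not\cong k$ stably for $m\ne 0$; this follows from the fact that $k$ has complexity $n\ge 2$ over $C_p^n$ and is therefore not $\Omega$-periodic, so the $k$-dimensions of the modules $\Omega^m k$ grow without bound as $|m|\to\infty$ and $[\Omega k]$ has infinite order. Surjectivity is Dade's theorem proper and is where the real work lies. The approach I would take, after reducing to the algebraically closed case by a descent argument identifying $T(C_p^n)$ with its base change to $\ol{k}$, is via rank varieties and Dade's lemma: an endotrivial module $M$ restricts to an endotrivial module on every cyclic shifted subgroup $\langle u\rangle$, hence by the rank-one computation is stably some suspension $\Omega^{m(u)}k$; one then argues that the integer $m(u)$ is locally constant, and therefore constant, on the irreducible variety of shifted subgroups, and finally that a module which is endotrivial and restricts to $\Omega^{m}k$ on every shifted subgroup must itself be stably isomorphic to $\Omega^m k$. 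This forces the map to be onto, giving $T(C_p^n)\cong\bb{Z}$.

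The main obstacle is precisely this last surjectivity step in the higher-rank case: it is the substance of Dade's classification and genuinely requires the machinery of rank varieties and the detection of projectivity on shifted subgroups, whereas every other ingredient is either a formal property of the suspension homomorphism or an explicit manipulation of modules over $k[t]/(t^p)$. For this reason I would, as the authors do, ultimately invoke Dade's theorem \cite{dade} for the rank $\ge 2$ surjectivity rather than reprove it from scratch.
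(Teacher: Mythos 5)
Your proposal is correct and ultimately takes the same route as the paper: the paper states this lemma purely as a citation to Dade \cite{dade} with no proof of its own, and your argument likewise defers to Dade's theorem for the one genuinely hard step, surjectivity of $m \mapsto [\Omega^m k]$ when $n \geq 2$. The portions you supply by hand --- the Jordan-block classification over $kC_p \cong k[t]/(t^p)$ yielding $T(C_2) = 0$ and $T(C_p) \cong C_2$, and the infinite order of $[\Omega k]$ for $n \geq 2$ via complexity --- are correct and consistent with what the citation covers.
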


Let us revisit what we know so far. We have our spectral sequence \cref{picspectralsequence}, and we can compare this spectral sequence functorially with an analogous spectral sequence for $\Omega \StMod(kG)$. The latter has a multiplicative structure, and the $E_2$-page can be described in terms of Tate cohomology groups. In fact, the spectral sequence may be recognised as a rather classical one. To see this, let's suppose we may take $G$ to be a finite $p$-group with a single normal elementary abelian $p$-subgroup $H$. Then $\mc{O}_{\mc{A}}(G) \simeq B(G/H)$, and the limit spectral sequence for $\Omega \StMod(kG)$ reads
\[ E_2^{st} = H^s\big(G / H; \wh{H}^{-t}(H;k)\big) \Ra \wh{H}^{s - t}(G;k)\,\text{.}\]
For nonpositive $t$, the spectral sequence is indeed isomorphic to the Hochschild--Serre spectral sequence associated to the extension $H \to G \to G / H$. This spectral sequence is sufficiently well-studied that the differentials are known in all examples of interest. Via the Tate duality pairing (cf. \cref{tatereference}) this allows us to deduce the differentials for positive $t$ as well.

From a homotopical viewpoint, the comparison with the Hochschild--Serre spectral sequence can be seen by considering the natural map $k^{hH} \to k^{tH}$. The map is $G / H$-equivariant and induces an isomorphism on nonnegative homotopy groups, so that their limit spectral sequence may be compared. Applied to $k^{hH}$, this is the homotopy fixed points spectral sequence, and it converges to the homotopy groups of $(k^{hH})^{hG / H}$, which is naturally isomorphic to $k^{hG}$. 

Although a large swathe of differentials can now be understood using \cref{comparisontool} and the comparison with the Hochschild--Serre spectral sequence, we will often find that there's a particular differential which strongly influences the development of the $0$-line but which just barely falls outside the range of \cref{comparisontool}. For these differentials, we use an elegant formula of Mathew--Stojanoska. To match their statement with ours, let's assume that the diagram~$\mc{I}$ consists of a single object so that the limit spectral sequences becomes an HFPSS.

\begin{thm}[{\cite[Theorem 6.1.1]{ms}}]
	\label{unstabledifferential}
	Let the notation be as in \cref{comparisontool}. Assume that~$\mc{I}$ has a single object so that we may identify the limit spectral sequences with homotopy fixed point spectral sequences. Then we have the formula
	\[ d_r^{rr}(\mf{pic})(x) = d_r^{r,r-1}(\Omega)(x) + x^2\,\text{,}\]
	where the square refers to the multiplicative structure in the limit spectral sequence for $\Omega$.
\end{thm}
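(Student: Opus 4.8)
The plan is to reinterpret both differentials $k$-invariant-theoretically and then to localize the entire discrepancy to a single Postnikov layer, namely the one lying exactly one step beyond the band where \cref{functorialequivalence} applies. First I would recall the standard obstruction-theoretic description of differentials in a descent (homotopy fixed point) spectral sequence: a class surviving to the $E_r$-page is represented by a lift of the relevant filtration cell into the appropriate truncation of the corresponding slice of the spectrum, and $d_r$ of that class is computed as the composite of this lift with the next $k$-invariant in the Postnikov tower. In these terms, \cref{comparisontool} is precisely the assertion that whenever both the source and the target Postnikov layers relevant to $d_r^{st}$ lie inside the band $[t+1,2t]$ in which \cref{functorialequivalence} identifies $\tau_{[t+1,2t]}\mf{pic}\,\mc{C}$ with $\Sigma\,\tau_{[t,2t-1]}\Omega\mc{C}$, the governing $k$-invariants — and hence the differentials — agree. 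I would make this comparison explicit so that the problem collapses to comparing a single $k$-invariant.

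Next I would pinpoint why $d_r^{r,r}$ is exactly the boundary case. Taking $t = r-1$ in \cref{functorialequivalence} yields $\tau_{[r,2r-2]}\mf{pic}\,\mc{C}\simeq \Sigma\,\tau_{[r-1,2r-3]}\Omega\mc{C}$, which identifies the source layer $\pi_r\mf{pic}\,\mc{C}\cong \pi_{r-1}\Omega\mc{C}$ together with every intermediate layer additively with the shifted $\Omega$-spectrum. The target of $d_r^{r,r}(\mf{pic})$, however, lives in $\pi_{2r-1}\mf{pic}\,\mc{C}\cong \pi_{2r-2}\Omega\mc{C}$, whose degree $2r-1$ sits one step above the band $[r,2r-2]$. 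Thus the $k$-invariant controlling $d_r^{r,r}(\mf{pic})$ is the first one not forced by \cref{functorialequivalence} to coincide with its additive counterpart: it is the first place where the multiplicative group structure carried by $\mf{pic}\,\mc{C}$ (on positive-degree layers the suspension of the units spectrum $\gl_1\Omega\mc{C}$) deviates from the additive structure of $\Sigma\,\Omega\mc{C}$.

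Finally I would compute this deviation. Heuristically the group law on units reads $(1+a)(1+b) = 1 + (a+b) + ab$, and the quadratic term $ab$ is invisible throughout the metastable band but becomes the leading correction exactly one degree beyond it; restricting to the diagonal produces the cup-square $x^2$, formed via the multiplicative structure of the $\Omega$-spectral sequence, which is available precisely because $\Omega\mc{C}$ is an $\bb{E}_\infty$-ring. To make this rigorous I would reduce to a universal situation — analyzing the two-stage truncation $\tau_{[r,2r-1]}\mf{pic}\,\mc{C}$ and identifying the nonlinear part of its top $k$-invariant with the quadratic operation induced by the ring multiplication — and then transport this identification through the descent spectral sequence, so that the additive part reproduces $d_r^{r,r-1}(\Omega)(x)$ and the quadratic part contributes $x^2$.

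The main obstacle is precisely this last identification: proving that the correction is exactly $+\,x^2$, with the correct coefficient and sign, rather than merely \emph{some} quadratic natural operation. This demands a careful analysis of the first nonadditive $k$-invariant of the units spectrum in the metastable range — equivalently, of the way $\gl_1$ fails to be the shift of the identity functor one Postnikov stage beyond where \cref{functorialequivalence} guarantees agreement. I expect this quadratic $k$-invariant computation, and not any of the indexing or obstruction-theoretic bookkeeping above, to be the genuine content of the theorem.
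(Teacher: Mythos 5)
First, a point of order: the paper itself does not prove \cref{unstabledifferential} at all --- the statement is imported verbatim from Mathew--Stojanoska \cite[Theorem 6.1.1]{ms} and used as a black box, so there is no in-paper argument to compare yours against. The relevant benchmark is therefore the original proof in \cite{ms}.

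Measured against that benchmark, your outline reproduces the formal skeleton of their argument correctly: the obstruction-theoretic reading of $d_r$ as composition with a $k$-invariant of the filtered (cosimplicial) spectrum, the observation that \cref{functorialequivalence} with $t = r-1$ identifies the layers of $\mf{pic}\,\mc{C}$ with those of $\Sigma\,\Omega\mc{C}$ through degree $2r-2$, so that $d_r^{r,r}$ is governed by the first $k$-invariant not forced to be additive, and the heuristic $(1+a)(1+b) = 1 + a + b + ab$ locating the deviation in a quadratic term. All of your indexing checks out. But your final paragraph concedes the actual theorem: you never establish that the deviation of the top $k$-invariant of $\tau_{\geq 1}\mf{pic}\,\mc{C} \simeq \Sigma\,\gl_1(\Omega\mc{C})$ from that of $\Sigma\,\Omega\mc{C}$ is \emph{precisely} the cup square, with no other quadratic operation and with the stated coefficient --- you only observe that it must be \emph{some} natural quadratic operation, and you defer its identification as ``the genuine content of the theorem.'' That identification is exactly what Section~6 of \cite{ms} supplies: after the same bookkeeping you describe, they reduce by naturality to a universal situation and explicitly compute the nonadditive $k$-invariant there, which is the step that produces the $x^2$ term in the formula. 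Since everything else in your plan is, by your own admission, indexing and obstruction-theoretic bookkeeping, what you have is an accurate road map to the proof rather than a proof: the one step you leave open is the theorem.
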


\begin{rmk}
    The classes of $p$-groups that are considered in this paper (cyclic $p$-groups and generalized quaternion groups) have a single elementary abelian $p$-subgroup. As a result, we can identify the limit spectral sequence with the homotopy fixed point spectral sequence and use \cref{unstabledifferential}. However, analogous methods apply to more complicated groups as well. This is currently work in progress.
\end{rmk} 

\section{Galois descent}
\label{sectiongaloisdescent}

\subsection{Stratification as Galois descent}

Mathew's \cref{quillen} becomes especially simple when we may take the family $\mc{A}$ to consist of a single (necessarily normal) subgroup $H$. In such examples, the decomposition reduces to the much simpler
\begin{equation}
\label{simpledecomposition}
    \mi{StMod}(kG) \simeq \mi{StMod}(kH)^{hG/H}\,\text{.}
\end{equation}
Throughout this paper, we will consider the two families of $p$-groups where this phenomenon occurs:
\begin{itemize}[noitemsep, topsep = 0pt]
    \item The cyclic $p$-groups $C_{p^n}$, and
    \item the generalized quaternion groups $Q_{2^n}$.
\end{itemize}
The cyclic $p$-groups obviously have a single elementary abelian subgroup $H$, which is isomorphic to~$C_p$. As for the generalized quaternion groups, we recall that these may defined e.g. algebraically as the groups
\begin{align*}
    Q_8 &\cong \langle \theta, \tau \ | \ \theta^4 = 1, \theta^2 = \tau^2, \tau\theta\tau^{-1} = \theta^{-1}\rangle\,\text{.} \\
    Q_{2^n} &\cong \langle \theta, \tau \ | \ \theta^{2^{n-1}} =\tau^4 = 1, \theta^{2^{n-2}} = \tau^2, \tau\theta\tau^{-1} = \theta^{-1}\rangle\,\text{.}
\end{align*}
The centre $H = Z(Q_{2^n})$ is the only nontrivial elementary abelian subgroup, being isomorphic to~$C_2$, and the quotient is isomorphic to the Klein four-group $(C_2)^2$ or the dihedral group $D_{2^{n-1}}$.

As it happens, we may re-interpret the decomposition as an instance of what is known as faithful Galois descent. We begin with the relevant definitions. Let $f \colon R \to S$ be a map of $\bb{E}_\infty$-ring spectra. Then we call $f$ a \textbf{$G$-Galois extension} if there is a $G$-action on $S$ such that the natural maps $R \to S^{hG}$ and $S \otimes_R S \to F(G_+,S)$ are weak equivalences. We say that the Galois extension is \textbf{faithful} if $S$ is moreover faithful as an $R$-module.

Whenever we have a faithful Galois extension, we have a good theory of descent called \textbf{Galois descent}, which has been studied by Gepner--Lawson \cite{GepnerLawson2016-ps} and Mathew--Stojanoska \cite{ms}, among others:

\begin{thm}
\label{thm:galoisdescent}
    If $f \colon R \to S$ is a faithful $G$-Galois extension of $\bb{E}_\infty$-rings, then we have a natural equivalence of $\infty$-categories
    \[ \mi{Mod}(R) \simeq \mi{Mod}(S)^{hG} \,\text{.} \]
\end{thm}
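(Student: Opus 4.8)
The plan is to prove this by Amitsur descent: I would realise $\mi{Mod}(R)$ as the totalisation of the cosimplicial diagram of module categories attached to $f$, and then use the Galois condition to reinterpret that totalisation as homotopy fixed points. First I would form the base-change adjunction $L = S \otimes_R (\blank) \colon \mi{Mod}(R) \rightleftarrows \mi{Mod}(S) \colon U$, where $U$ is the forgetful functor, together with the Amitsur/\v{C}ech cosimplicial $\bb{E}_\infty$-ring $S^{\otimes_R \bullet + 1}$, i.e.\ $S \rightrightarrows S \otimes_R S \Rrightarrow \cdots$. Tensoring up along each coface produces an augmented cosimplicial $\infty$-category $\mi{Mod}(R) \to \mi{Mod}(S^{\otimes_R \bullet + 1})$, and the goal of the first half of the argument is the descent equivalence $\mi{Mod}(R) \simeq \varprojlim_{\Delta} \mi{Mod}(S^{\otimes_R \bullet + 1})$.

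To establish this descent equivalence I would invoke the $\infty$-categorical (comonadic) Barr--Beck--Lurie theorem (the dual of \cite[Theorem~4.7.3.5]{lurie2012higher}) applied to $L$. Its two hypotheses are that $L$ be conservative and that $L$ preserve totalisations of $L$-split cosimplicial objects. The second hypothesis is automatic in this stable setting, since $L$-split cosimplicial objects have absolute (split) totalisations, which every functor preserves. The first hypothesis --- conservativity of $S \otimes_R (\blank)$ --- is \emph{exactly} the faithfulness assumption on the Galois extension: a map of $R$-modules is an equivalence iff its cofibre vanishes, and $S \otimes_R (\blank)$, being exact, reflects equivalences iff it reflects zero objects, which is precisely the statement that $S$ is faithful. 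Barr--Beck--Lurie then identifies $\mi{Mod}(R)$ with $\mathrm{coMod}$ over the descent comonad $LU$ on $\mi{Mod}(S)$, and unwinding this comonad gives the cosimplicial totalisation above.

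The second half of the argument converts the $\Delta$-indexed limit into a $BG$-indexed one, and this is where the word \emph{Galois} (as opposed to merely \emph{descendable}) enters. Here I would use the defining equivalence $S \otimes_R S \xrightarrow{\sim} F(G_+, S)$. Tensoring it up shows $S^{\otimes_R n+1} \simeq F(G^{\times n}_+, S) \simeq \prod_{G^{\times n}} S$, hence $\mi{Mod}(S^{\otimes_R n+1}) \simeq \mi{Fun}(G^{\times n}, \mi{Mod}(S))$; one then checks that the coface and codegeneracy maps of the Amitsur diagram correspond under this identification to the structure maps of the bar resolution of the $G$-action on $\mi{Mod}(S)$. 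Consequently $\varprojlim_{\Delta} \mi{Mod}(S^{\otimes_R \bullet+1})$ computes the cobar presentation of $\varprojlim_{BG} \mi{Mod}(S) = \mi{Mod}(S)^{hG}$, yielding the asserted equivalence, with the augmentation $R \simeq S^{hG}$ recovering the base case. Naturality is inherited from the naturality of base change and of the Amitsur construction.

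The hard part, I expect, is the bookkeeping of this second half: matching the combinatorics of the \v{C}ech/Amitsur cosimplicial object with those of the bar construction on $BG$, i.e.\ verifying that the levelwise equivalences $\mi{Mod}(S^{\otimes_R n+1}) \simeq \mi{Fun}(G^{\times n}, \mi{Mod}(S))$ assemble into an equivalence of \emph{cosimplicial} $\infty$-categories rather than merely agreeing degreewise. This amounts to tracking how the comultiplication $S \to S \otimes_R S \simeq F(G_+,S)$ encodes the $G$-action on $S$. The conservativity/faithfulness input, by contrast, is the purely formal engine that makes descent go through at all, and presents no real difficulty once the definition of faithfulness is unwound.
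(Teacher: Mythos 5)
The paper does not actually prove \cref{thm:galoisdescent}; it quotes it from Gepner--Lawson \cite{GepnerLawson2016-ps} and Mathew--Stojanoska \cite{ms}, and your outline follows the same strategy as those sources: comonadic descent along $L = S\otimes_R(\blank)$, followed by identification of the Amitsur cosimplicial object with the cobar construction for the $G$-action. Your second half is fine modulo the bookkeeping you acknowledge (note that the levelwise equivalence $S^{\otimes_R n+1}\simeq F(G^{\times n}_+,S)$ itself uses finiteness of $G$ to commute $F(G_+,\blank)$ past base change). But there is a genuine gap in the first half, precisely at the step you dismiss as automatic.

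The second Barr--Beck--Lurie hypothesis is that $L$ preserve totalisations of $L$-split cosimplicial objects, and your justification conflates ``$L$-split'' with ``split.'' An $L$-split cosimplicial object $X^\bullet$ of $\mi{Mod}(R)$ is one whose \emph{image} $LX^\bullet$ is split in $\mi{Mod}(S)$; $X^\bullet$ itself need not be split, its totalisation need not be absolute, and the comparison map $L(\Tot\, X^\bullet)\to\Tot(LX^\bullet)$ has no formal reason to be an equivalence, since base change does not commute with limits in general. Faithfulness does not repair this: conservativity of $L$ is indeed equivalent to faithfulness of $S$, as you say, but faithfulness is strictly weaker than descendability (cf.\ \cref{rmk:abstractdescent} and \cite{mathew}), and it is descendability, not faithfulness, that controls convergence of the Amitsur tower; a conservative exact left adjoint can fail to be comonadic. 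What saves the theorem is an input special to \emph{finite} Galois extensions that your proof never invokes: by a theorem of Rognes \cite{rognes}, $S$ is a dualizable $R$-module. Dualizability gives $S\otimes_R M\simeq \Hom_R(S^{\vee},M)$, so $L$ is also a right adjoint and preserves \emph{all} limits, making your hypothesis on $L$-split totalisations genuinely automatic; equivalently, faithful plus dualizable implies descendable, which yields the equivalence $\mi{Mod}(R)\simeq\varprojlim_{\Delta}\mi{Mod}\big(S^{\otimes_R\bullet+1}\big)$ directly. With that ingredient inserted, the rest of your argument goes through.
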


In view of \cref{omegaistate} and \cref{simpledecomposition}, it is clear that we expect a certain faithful $G/H$-Galois extension $k^{tG} \to k^{tH}$ whenever $G$ is any of the aforementioned groups. In the next subsection, we go about to prove this assertion, thus yielding a different proof of \cref{quillen} for these specific groups. This proof, we will find, is purely homotopical.

\begin{rmk}
    \cref{reference} is also known to have a proof using Galois descent, or rather, reverse Galois descent; see \cite{Mathewtorus2015-yg}. Briefly, if $A$ is an abelian $p$-group of $p$-rank $n$, then one can construct a fiber sequence of classifying spaces
    \[ B\bb{Z}^n \to BA \to B^2 \bb{Z}^n \simeq B \bb{T}^n \,\text{,}\]
    where $\bb{T}^n$ denotes the $n$-torus, which Mathew uses to prove that there exist faithful $\mathbb{T}^n$-Galois extensions of ring spectra 
    \[k^{h\T^n} \to k^{hA} \qquad \text{and} \qquad k^{t\T^n} \to k^{tA}\,\text{.}\]
    In this case, it is the source rather than the target which is understood well, and Mathew proves conditions for an element in the Picard group of $k^{t\mathbb{T}^n}$ to descend to the Picard group of $k^{tA}$. This, along with a computation of $\Pic(k^{t\mathbb{T}^n})$, shows that $\Pic(k^{tA})$ is cyclic.
\end{rmk}

\subsection{Galois extensions for cochain algebras}

In this section, we prove that for a $p$-group $G$ of $p$-rank $1$ and with unique nontrivial elementary abelian subgroup~$H$, we have natural maps $k^{tG} \to k^{tH}$ that are $G/H$-Galois extensions. We first show that we have Galois extension $k^{hG} \to k^{hH}$ on homotopy fixed points, and then invoke a base change argument to deduce the desired result. Our main tool is the following result of Rognes.

\begin{thm}[{\cite[Prop. 5.6.3]{rognes}}]
    \label{rognesgalois}
    Let $\Gamma$ be a finite discrete group, and $P \to X$ a principal $\Gamma$-bundle.  Suppose that $X$ is path-connected and $\pi_1(X)$ acts nilpotently on $H_*(\Gamma;k)$. Then the map of cochain $k$-algebras $F(X_+,k) \to F(P_+,k)$ is a $\Gamma$-Galois extension.
\end{thm}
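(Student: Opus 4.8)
The plan is to verify the two defining properties of a $\Gamma$-Galois extension directly, taking $R = F(X_+,k)$, $S = F(P_+,k)$, and letting $\Gamma$ act on $S$ through its (free) action on the total space $P$; since $F(-,k)$ is contravariant this yields a genuine $\Gamma$-action on the $\mathbb{E}_\infty$-ring $S$, and $f$ is the map induced by the projection $P \to X$. It then remains to produce equivalences (i) $R \xrightarrow{\sim} S^{h\Gamma}$ and (ii) $S \otimes_R S \xrightarrow{\sim} F(\Gamma_+, S)$.

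Condition (i) is the formal half, and I would dispatch it first. Because $\Gamma$ is discrete and acts freely on $P$ with quotient $X$, the Borel construction collapses: the projection $P_{h\Gamma} = E\Gamma \times_\Gamma P \to P/\Gamma = X$ is a weak equivalence. Since $F(-,k)$ is a mapping spectrum out of suspension spectra, it carries homotopy colimits to homotopy limits, whence
\[ S^{h\Gamma} = \varprojlim_{B\Gamma} F(P_+,k) \simeq F\big((P_{h\Gamma})_+, k\big) \simeq F(X_+,k) = R, \]
and one checks this coincides with the natural comparison map. Notably, no hypothesis on $\pi_1(X)$ is needed here.

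For condition (ii) I would exploit the torsor structure of the bundle. The shearing map $P \times \Gamma \to P \times_X P$, $(p,\gamma) \mapsto (p, p\gamma)$, is a homeomorphism --- this is exactly what it means for $P$ to be a principal $\Gamma$-bundle --- and since $\Gamma$ is finite and discrete, $P \times \Gamma \cong \coprod_\Gamma P$. Applying cochains gives
\[ F\big((P \times_X P)_+, k\big) \simeq \prod_\Gamma F(P_+,k) \simeq F(\Gamma_+, S). \]
Moreover $P \to X$ is a fibration, so the strict pullback $P \times_X P$ already models the homotopy pullback $P \times_X^h P$. Thus it suffices to produce a natural equivalence $S \otimes_R S \simeq F\big((P\times_X^h P)_+, k\big)$ compatible with the Galois comparison map.

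This last equivalence is the Eilenberg--Moore base-change statement, and its convergence is the only genuine obstacle. Applying cochains to the square with legs $P \to X \leftarrow P$ produces an assembly map $S \otimes_R S \to F\big((P\times_X^h P)_+, k\big)$ whose associated spectral sequence has $E_2 = \mathrm{Tor}^{H^*(X;k)}\big(H^*(P;k), H^*(P;k)\big)$. I would show this map is an equivalence by invoking Dwyer's strong convergence theorem for the Eilenberg--Moore spectral sequence, which applies precisely when $\pi_1(X)$ acts nilpotently on the homology of the fiber of $P \to X$ --- that is, on $H_*(\Gamma;k)$, with $\Gamma$ the discrete fiber. This is exactly the nilpotence hypothesis of the theorem, and it is the point on which the whole argument hinges; the path-connectedness of $X$ ensures that the monodromy action and the Eilenberg--Moore setup are well-defined. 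Combining this equivalence with the shearing computation yields (ii) and completes the verification.
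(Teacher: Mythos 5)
Your proposal is correct and follows essentially the same route as the paper: condition (i) is dispatched formally via the identification $X \simeq P_{h\Gamma}$ and the fact that cochains carry homotopy colimits to homotopy limits, while condition (ii) reduces, via the shearing homeomorphism $P \times \Gamma \cong P \times_X P$, to strong convergence of the Eilenberg--Moore spectral sequence under the nilpotence hypothesis. The only differences are presentational: the paper runs the K\"unneth spectral sequence for $S \otimes_R S$ against the Eilenberg--Moore spectral sequence, identifying $E_2$-pages and cobar filtrations and citing Shipley for convergence, whereas you package the comparison as a single assembly map and cite Dwyer's convergence theorem (the input Rognes himself uses) --- just note that to conclude your assembly map is an equivalence you still implicitly need the strongly convergent K\"unneth/bar filtration on the source to match the Eilenberg--Moore filtration on the target, which is exactly the identification the paper makes explicit.
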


We sketch the idea of the proof. To see that $F(X_+,k) \simeq F(P_+,k)^{h\Gamma} $ follows from properties of principal $\Gamma$-bundles. Namely, we have that the (right) action of $\Gamma$ on $P$ induces a left $\Gamma$ action on $F(P_+,k)$, which is compatible with the identification $X \simeq P_{h\Gamma}$. That is, of $X$ as the homotopy orbits of the action of $\Gamma$ on $P$.

The interesting part is showing that 
\[F(P_+,k) \otimes_{F(X_+,k)} F(P_+,k) \simeq F\big(\Gamma_+, F(P_+,k)\big)\,\text{.}\] 
As a tensor product of ring spectra, the homotopy groups of the left-hand side may be computed using the K{\"u}nneth spectral sequence
\[E^2_{s,t} \cong \Tor_{s,t}^{\pi_*(A)}(\pi_*(B), \pi_*(B)) \Rightarrow \pi_{s+t}(B \otimes_A B)\,\text{.}\]
Meanwhile, via the identification $F\big(\Gamma_+, F(P_+,k)\big) \simeq F\big((\Gamma \times P)_+,k\big)$, the homotopy groups of the right-hand side can be computed using the Eilenberg--Moore spectral sequence with $k$-coefficients, 
\[E^2_{s,t} \cong \Tor_{s,t}^{H^*(X;k)}\big(H^*(P;k), H^*(P;k)\big) \Rightarrow H^{-(s+t)}(P \times_X P;k)\,\text{.}\]

Since $\pi_*(F(X_+,k)) \cong H^*(X;k)$, we see that the $E_2$-pages of these spectral sequences agree. The filtrations are identified as well, as both can be viewed as being derived from the cobar construction. Therefore, if both spectral sequences converge strongly, then we obtain an equivalence between their targets, which is what we desired. Luckily, the K{\"u}nneth spectral sequence is always strongly convergent, and a theorem of Shipley \cite{shipleysseq} guarantees convergence of the Eilenberg--Moore spectral sequence if the hypotheses of \cref{rognesgalois} are satisfied.


We apply \cref{rognesgalois} to the fibre sequence $G / H \to BH \to BG$, where $G$ is a cyclic $p$-group or a generalised quaternion group, and $H$ is its elementary abelian $p$-subgroup. Notice that $BG$ is path-connected, and $G$, as a $p$-group, acts nilpotently on $H_*(G/H;k)$. We deduce that we have a $G/H$-Galois extension $F(BG_+,k) \to F(BH_+,k)$. Now since $G$ acts trivially on~$k$, these function spectra are naturally identified with homotopy fixed points, so we have proved the following results:

\begin{thm}
	\label{thm:khCp}
	The natural ring map $k^{hC_{p^n}} \to k^{hC_p}$ is a $C_{p^{n-1}}$-Galois extension of $\mathbb{E}_\infty$-rings.
\end{thm}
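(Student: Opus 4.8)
The plan is to obtain this theorem as a direct specialisation of \cref{rognesgalois}. The key input is group-theoretic: the cyclic group $G = C_{p^n}$ possesses a unique subgroup of order $p$, namely its elementary abelian $p$-subgroup $H \cong C_p$, and this subgroup is automatically normal, with quotient $G/H = C_{p^n}/C_p \cong C_{p^{n-1}}$. Thus the Galois group appearing in the statement is $\Gamma = C_{p^{n-1}}$. I would then feed the short exact sequence $1 \to C_p \to C_{p^n} \to C_{p^{n-1}} \to 1$ into the associated fibre sequence of classifying spaces $G/H \to BH \to BG$. Because $H$ is normal, the quotient $\Gamma = G/H$ acts on $BH$ and realises $BG$ as $(BH)_{h\Gamma}$, so that $BH \to BG$ is a principal $\Gamma$-bundle in precisely the sense required by \cref{rognesgalois}.

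Next I would check the two hypotheses of \cref{rognesgalois} for this bundle. The base $BG = BC_{p^n}$ is visibly path-connected. For the nilpotence condition, $\pi_1(BG) = C_{p^n}$ acts on $H_*(\Gamma;k) = H_*(C_{p^{n-1}};k)$, and since $C_{p^n}$ is a $p$-group and $k$ has characteristic $p$, the augmentation ideal of $k[C_{p^n}]$ is nilpotent; hence any action through $C_{p^n}$ on a $k$-module is nilpotent. With both hypotheses in hand, \cref{rognesgalois} yields that the map of cochain algebras $F(BG_+,k) \to F(BH_+,k)$ is a $C_{p^{n-1}}$-Galois extension of $\bb{E}_\infty$-rings. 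Finally, since $G$ acts trivially on $k$, the function spectra are identified with homotopy fixed points, $F(BG_+,k) \simeq k^{hG}$ and $F(BH_+,k) \simeq k^{hH}$, which rewrites the map as $k^{hC_{p^n}} \to k^{hC_p}$ and delivers the claim.

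I do not anticipate a genuine obstacle, as the analytic content is entirely absorbed into \cref{rognesgalois}; the only points demanding care are the verification that $BH \to BG$ truly carries the structure of a principal $C_{p^{n-1}}$-bundle, which rests on the normality of $H$ in $G$, and the nilpotence of the $\pi_1$-action, which is a standard fact for $p$-groups acting on modules in characteristic $p$. Both are routine, so the theorem follows.
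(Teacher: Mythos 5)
Your proposal is correct and follows essentially the same route as the paper: both apply \cref{rognesgalois} to the principal $C_{p^{n-1}}$-bundle $BC_p \to BC_{p^n}$ arising from the extension $1 \to C_p \to C_{p^n} \to C_{p^{n-1}} \to 1$, verify path-connectedness of the base and nilpotence of the $\pi_1$-action (automatic for a $p$-group acting on a $k$-module in characteristic $p$), and then identify the cochain algebras $F(BC_{p^n+},k) \to F(BC_{p+},k)$ with $k^{hC_{p^n}} \to k^{hC_p}$ using the triviality of the action on $k$. The only difference is cosmetic: the paper runs the argument simultaneously for cyclic and generalized quaternion groups, whereas you specialize to the cyclic case and spell out the normality and nilpotence points in slightly more detail.
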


\begin{thm}
\label{thm:khq}
	The natural ring map $k^{hQ_{2^n}} \to k^{hC_2}$ is a $Q_{2^n}/C_2$-Galois extension of $\mathbb{E}_\infty$-rings.
\end{thm}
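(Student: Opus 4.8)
The plan is to apply \cref{rognesgalois} directly to the principal bundle arising from the central extension $1 \to C_2 \to Q_{2^n} \to Q_{2^n}/C_2 \to 1$. Writing $H = Z(Q_{2^n}) \cong C_2$, the normality of $H$ in $G = Q_{2^n}$ produces a fibre sequence of classifying spaces
\[
Q_{2^n}/C_2 \longrightarrow BC_2 \longrightarrow BQ_{2^n}\,\text{,}
\]
in which the covering map $BC_2 \to BQ_{2^n}$ is a principal $(Q_{2^n}/C_2)$-bundle with discrete fibre $Q_{2^n}/C_2$. In the notation of \cref{rognesgalois} this is the bundle $P \to X$ with $P = BC_2$, $X = BQ_{2^n}$, and $\Gamma = Q_{2^n}/C_2$. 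I would remark that it is immaterial whether $Q_{2^n}/C_2$ is the Klein four group (the case $n = 3$) or the dihedral group $D_{2^{n-1}}$ (the case $n \geq 4$): the argument is entirely uniform in $n$, which is precisely why our treatment of $T(Q_{2^n})$ need not single out $Q_8$.

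Next I would verify the two hypotheses of \cref{rognesgalois}. Path-connectedness of $X = BQ_{2^n}$ is immediate. The substantive point is that $\pi_1(BQ_{2^n}) \cong Q_{2^n}$ must act nilpotently on $H_*(Q_{2^n}/C_2; k)$. Here the characteristic is decisive: each homology group $H_i(Q_{2^n}/C_2; k)$ is a finite-dimensional module over the group algebra $k[Q_{2^n}]$, and since $Q_{2^n}$ is a $2$-group and $\mathrm{char}\,k = 2$, the trivial module is the unique simple $k[Q_{2^n}]$-module. Hence a composition series exhibits the $Q_{2^n}$-action on each $H_i$ as unipotent, so the action on $H_*(Q_{2^n}/C_2; k)$ is nilpotent and the hypothesis holds.

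With both hypotheses in place, \cref{rognesgalois} yields that the map of cochain algebras
\[
F\big((BQ_{2^n})_+, k\big) \longrightarrow F\big((BC_2)_+, k\big)
\]
is a $(Q_{2^n}/C_2)$-Galois extension. To finish, I would identify these cochain algebras with homotopy fixed point spectra: since $G$ acts trivially on $k$, the homotopy fixed points $k^{hG} = \varprojlim_{BG} k$ are exactly the cotensor $F(BG_+, k)$. Applying this to $G = Q_{2^n}$ and $G = C_2$ identifies the displayed map with the natural restriction map $k^{hQ_{2^n}} \to k^{hC_2}$, which is therefore a $(Q_{2^n}/C_2)$-Galois extension of $\mathbb{E}_\infty$-rings. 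This is the precise analogue of \cref{thm:khCp}, and both theorems fall out of the single application of \cref{rognesgalois}.

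I anticipate that the only genuine obstacle is bookkeeping rather than mathematics: one must check that the equivalence $F(BG_+, k) \simeq k^{hG}$ is natural and $(Q_{2^n}/C_2)$-equivariant, so that the Galois structure supplied by Rognes transports faithfully to the homotopy fixed point description (and that the induced map is the expected restriction map rather than some twist). The nilpotence verification, although essential to invoking the theorem, is standard once one appeals to the modular representation theory of $p$-groups. Faithfulness is not claimed here and is deferred to the subsequent Tate-construction argument.
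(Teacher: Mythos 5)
Your proposal is correct and follows essentially the same route as the paper: apply Rognes's theorem (\cref{rognesgalois}) to the principal $Q_{2^n}/C_2$-bundle $BC_2 \to BQ_{2^n}$ arising from the central extension, verify path-connectedness and the nilpotence hypothesis (which the paper dispatches with the same observation that $p$-groups act nilpotently on $k$-modules in characteristic $p$, for which you supply the standard unique-simple-module justification), and then identify the cochain algebras $F(BG_+,k)$ with the homotopy fixed points $k^{hG}$ using triviality of the action on $k$. The only difference is that you spell out the nilpotence and equivariance bookkeeping in more detail than the paper does.
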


We proceed to use this to show that we have Galois extensions $k^{tG} \to k^{tH}$ for all aforementioned $G$ and $H$. First, observe that given a Galois extension $R \to S$ and a map of ring spectra $R \to Q$, we can take the pushout along these maps to form the base-change $Q \to S \otimes_R Q$.  The following result of Rognes provides conditions for this map to be a Galois extension.
    
\begin{lem}[{\cite[Section 7.1]{rognes}}]
    \label{thm:basechange}
    $G$-Galois extensions are stable under dualizable base-change. Faithful $G$-Galois extensions are stable under arbitrary base-change.  
\end{lem}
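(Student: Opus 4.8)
The plan is to check, for the base-changed map $Q \to S' \coloneqq S \otimes_R Q$ equipped with its induced $G$-action, the two defining conditions of a $G$-Galois extension --- namely $Q \xrightarrow{\sim} (S')^{hG}$ and $S' \otimes_Q S' \xrightarrow{\sim} F(G_+, S')$ --- and, in the faithful case, to verify separately that $S'$ remains faithful over $Q$. I would first dispatch the second condition, which holds under \emph{any} base-change. Since $G$ is finite, $F(G_+, S) \simeq \bigoplus_{g \in G} S$ is a finite coproduct, and both the relative tensor product and finite coproducts are preserved by $- \otimes_R Q$; associativity of the relative tensor product then gives
\[ S' \otimes_Q S' \simeq S \otimes_R S \otimes_R Q \simeq F(G_+, S) \otimes_R Q \simeq F(G_+, S')\,\text{.}\]
Thus the entire content of the statement is concentrated in the fixed-point condition $Q \simeq (S \otimes_R Q)^{hG}$, together with (in the faithful case) faithfulness.

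For the fixed-point condition there is always a canonical comparison map $Q \simeq S^{hG} \otimes_R Q \to (S \otimes_R Q)^{hG}$, coming from the fact that $- \otimes_R Q$ is merely colax with respect to the limit $(-)^{hG} = \varprojlim_{BG}(-)$; the two halves of the lemma correspond to two different ways of forcing this map to be an equivalence. In the dualizable case, if $Q$ is a dualizable $R$-module with dual $Q^\vee$, then $- \otimes_R Q \simeq F_R(Q^\vee, -)$ is a right adjoint and hence preserves all limits, in particular the homotopy fixed points $\varprojlim_{BG} S$; so the comparison map is an equivalence and $Q \to S'$ is $G$-Galois.

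In the faithful case I would instead invoke Galois descent directly. By \cref{thm:galoisdescent}, a faithful $G$-Galois extension $R \to S$ induces an equivalence $\Mod(R) \simeq \Mod(S)^{hG}$ whose inverse sends a module-with-descent-datum to its homotopy fixed points; unwinding this on the $R$-module $Q$ (regarded as such via $R \to Q$) yields precisely the natural identification $Q \simeq (S \otimes_R Q)^{hG}$, so the fixed-point condition holds for arbitrary $Q$. Faithfulness of $S'$ over $Q$ is then immediate: if $N$ is a $Q$-module with $S' \otimes_Q N \simeq 0$, then by associativity $S \otimes_R N \simeq (S \otimes_R Q) \otimes_Q N = S' \otimes_Q N \simeq 0$, whence $N \simeq 0$ by faithfulness of $S$ over $R$. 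Thus $Q \to S'$ is again a faithful $G$-Galois extension.

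The only real obstacle is the non-commutation of the limit $(-)^{hG}$ with the colimit-flavoured operation $- \otimes_R Q$; the point of the lemma is that this is rescued in two complementary ways, either by dualizability (which upgrades $- \otimes_R Q$ to a limit-preserving functor) or by faithfulness (which supplies the identification through the descent equivalence of \cref{thm:galoisdescent}). I would take care that the descent-based argument is genuinely available here, i.e. that the proof of \cref{thm:galoisdescent} does not itself rely on this base-change lemma; since the descent theorem is cited independently, there is no circularity.
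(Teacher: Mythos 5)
Your proof is correct, so let me compare routes. The paper itself offers no proof of \cref{thm:basechange} --- it is quoted verbatim from Rognes --- so the honest comparison is with Rognes's argument in Section~7.1 of his memoir. On the unramified condition and on the dualizable half you do exactly what Rognes does: base change commutes with finite coproducts and relative tensor products, and dualizability of $Q$ turns $- \otimes_R Q \simeq F_R(Q^\vee,-)$ into a limit-preserving functor, hence one commuting with $(-)^{hG}$. Where you genuinely diverge is the faithful half. Rognes never invokes the module-category equivalence of \cref{thm:galoisdescent}; instead he uses that a $G$-Galois extension $S$ is \emph{automatically} dualizable as an $R$-module (his Proposition~6.2.1), applies $- \otimes_R S$ to the comparison map $Q \to (S \otimes_R Q)^{hG}$, commutes the tensor past the fixed points by that dualizability, identifies the result using $S \otimes_R S \simeq F(G_+,S)$ together with $F(G_+,X)^{hG} \simeq X$, and then descends the resulting equivalence along the faithful map $R \to S$. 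That argument is elementary and stays inside the basic theory of Galois extensions; yours is shorter but leans on the much heavier descent theorem. Within this paper your route is legitimate: \cref{thm:galoisdescent} is stated before \cref{thm:basechange}, and the literature proofs of it (descendability plus the cobar identification, which needs only the unramified condition and finiteness of $G$) do not pass through base change --- you were right to flag and discharge this circularity worry, since it is the one extra verification your approach requires and Rognes's does not. Your closing argument that faithfulness itself is preserved, via associativity of the relative tensor product, coincides with Rognes's.
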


From material that we discuss in the appendix, it turns out that in the cases we consider, one can identify the Tate constructions $k^{tG} \to k^{tH}$ as a (dualizable) base-change of $k^{hG} \to k^{hH}$. Indeed, work of Greenlees (\cref{thm:greenleestate}) allows us to view $k^{tG}$ as a localization of $k^{hG}$ away from the augmentation ideal $I$. This construction depends only on the radical of the ideal $I$. Now, since the groups $C_p$, $C_{p^n}$, and $Q_{2^n}$ are Cohen--Macualay, $\pi_{-*}(k^{hG})$ is free over a polynomial subalgebra $A \cong k[x]$.  The radical of the ideal $(x)$ is the same as the radical of the augmentation ideal $I$, and so we obtain the following pushout diagrams of ring spectra.
\begin{center}
    \begin{tikzcd}[column sep = huge, row sep = huge]
k^{hC_{p^n}} \arrow[d] \arrow[r]  \arrow[dr, phantom, "\ulcorner", very near end] & k^{hC_p} \arrow[d] \\
k^{tC_{p^n}} \arrow[r]           & k^{tC_p}          
\end{tikzcd}
\qquad\text{and} \qquad
\begin{tikzcd}[column sep = huge, row sep = huge]
k^{hQ_{2^n}} \arrow[d] \arrow[r] \arrow[dr, phantom, "\ulcorner", very near end] & k^{hC_2} \arrow[d] \\
k^{tQ_{2^n}} \arrow[r] & k^{tC_2}          
\end{tikzcd}
\end{center}
In both cases the Tate constructions $k^{tG}$ and $k^{tH}$ are both formed by the same finite localizations (away from the ideal $(x)$).  Since $k^{tG}$ can be identified as a finite localization of $k^{hG}$, it is therefore dualizable.  The discussion above therefore proves the following theorems:

\begin{thm}
	\label{thm:ktCp}
	The natural ring map $k^{tC_{p^n}} \to k^{tC_p}$ is a $C_{p^{n-1}}$-Galois extension of $\mathbb{E}_\infty$-rings.
\end{thm}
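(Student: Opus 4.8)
The plan is to bootstrap from the homotopy fixed point version, \cref{thm:khCp}, which already supplies a $C_{p^{n-1}}$-Galois extension $k^{hC_{p^n}} \to k^{hC_p}$, and to transport the Galois property along a base-change via \cref{thm:basechange}. Concretely, I would realize $k^{tC_{p^n}} \to k^{tC_p}$ as the base-change of $k^{hC_{p^n}} \to k^{hC_p}$ along the natural localization map $k^{hC_{p^n}} \to k^{tC_{p^n}}$, check that this base-change is dualizable, and then apply the first assertion of \cref{thm:basechange} to conclude that the resulting bottom map is again a $C_{p^{n-1}}$-Galois extension of $\mathbb{E}_\infty$-rings.

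The heart of the argument is identifying the left-hand pushout square displayed above as a genuine pushout of $\mathbb{E}_\infty$-rings. For this I would invoke Greenlees' theorem (\cref{thm:greenleestate}) to present $k^{tG}$ as the finite localization of $k^{hG}$ away from the augmentation ideal $I \subseteq \pi_{-*}k^{hG}$. Since this construction depends only on the radical $\sqrt{I}$, and since the Cohen--Macaulay property of $C_{p^n}$ and $C_p$ makes $\pi_{-*}k^{hG}$ free over a polynomial subalgebra $A \cong k[x]$ with $\sqrt{(x)} = \sqrt{I}$, the Tate construction is obtained by simply inverting the single class $x$. The crucial point is that the \emph{same} class $x$ governs the localization at both the $C_{p^n}$- and the $C_p$-levels, so that applying $(\blank) \otimes_{k^{hC_{p^n}}} k^{tC_{p^n}}$ to the top row of the square inverts $x$ on $\pi_{-*}k^{hC_p}$; on homotopy this returns precisely $\pi_* k^{tC_p}$. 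This confirms $k^{hC_p} \otimes_{k^{hC_{p^n}}} k^{tC_{p^n}} \simeq k^{tC_p}$ and hence that the square is a pushout.

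The main obstacle is the dualizability claim, which is what licenses the use of \cref{thm:basechange}: one must verify that $k^{tC_{p^n}}$ is dualizable as a $k^{hC_{p^n}}$-module, so that base-change along it commutes with the homotopy fixed points computing Galois descent. I would argue this directly from the fact that the presentation of $k^{tC_{p^n}}$ as a finite (smashing) localization of $k^{hC_{p^n}}$ exhibits it as a dualizable $k^{hC_{p^n}}$-algebra; this is the step that most deserves care, and it is where I expect the real work to lie. Once dualizability is in hand, \cref{thm:basechange} applies verbatim: the $C_{p^{n-1}}$-action on $k^{hC_p}$ base-changes to a $C_{p^{n-1}}$-action on $k^{tC_p}$, and the two defining equivalences $k^{tC_{p^n}} \simeq \big(k^{tC_p}\big)^{hC_{p^{n-1}}}$ and $k^{tC_p} \otimes_{k^{tC_{p^n}}} k^{tC_p} \simeq F\big((C_{p^{n-1}})_+, k^{tC_p}\big)$ follow by base-changing the corresponding equivalences for the homotopy fixed point extension of \cref{thm:khCp}. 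This yields the asserted $C_{p^{n-1}}$-Galois extension $k^{tC_{p^n}} \to k^{tC_p}$.
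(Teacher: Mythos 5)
Your proposal follows the paper's own proof essentially verbatim: both bootstrap from \cref{thm:khCp}, use Greenlees' identification (\cref{thm:greenleestate}) of $k^{tG}$ as a localization of $k^{hG}$ away from the augmentation ideal, together with Cohen--Macaulay freeness of $\pi_{-*}(k^{hG})$ over a polynomial subalgebra $k[x]$ whose radical agrees with that of the augmentation ideal, to exhibit the pushout square, and then conclude via the dualizable base-change clause of \cref{thm:basechange}. Even the step you single out as requiring the most care --- dualizability of $k^{tC_{p^n}}$ as a $k^{hC_{p^n}}$-module --- is justified in the paper by exactly the same appeal to its presentation as a finite localization of $k^{hC_{p^n}}$.
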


\begin{thm}
\label{thm:ktq}
	The natural ring map $k^{tQ_{2^n}} \to k^{tC_2}$ is a $Q_{2^n}/C_2$-Galois extension of $\mathbb{E}_\infty$-rings.
\end{thm}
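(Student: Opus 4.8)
The plan is to obtain \cref{thm:ktq} from its homotopy-fixed-point counterpart \cref{thm:khq} by recognising $k^{tQ_{2^n}} \to k^{tC_2}$ as a dualizable base-change of the $Q_{2^n}/C_2$-Galois extension $k^{hQ_{2^n}} \to k^{hC_2}$, and then applying the stability statement in \cref{thm:basechange}. Concretely, with $R = k^{hQ_{2^n}}$ and the Galois extension $R \to S = k^{hC_2}$, I would base-change along the map $R \to k^{tQ_{2^n}}$ and exhibit the resulting pushout square
\[
k^{tC_2} \simeq k^{hC_2} \otimes_{k^{hQ_{2^n}}} k^{tQ_{2^n}}\,\text{.}
\]
Since the argument is formally identical to the cyclic case of \cref{thm:ktCp}, the only ingredient that must be verified anew is the group-cohomological input specific to $Q_{2^n}$.

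First I would invoke Greenlees's theorem (\cref{thm:greenleestate}) to identify $k^{tQ_{2^n}}$ with the finite localization of $k^{hQ_{2^n}}$ away from the augmentation ideal $I \subseteq \pi_{-*}(k^{hQ_{2^n}}) = H^*(Q_{2^n};k)$; as this localization depends only on $\sqrt{I}$, it suffices to invert any ideal with the same radical. Because $Q_{2^n}$ has periodic cohomology it is Cohen--Macaulay, so $H^*(Q_{2^n};k)$ is finite and free over a polynomial subalgebra $A \cong k[x]$ with $\sqrt{(x)} = \sqrt{I}$; hence $k^{tQ_{2^n}} \simeq k^{hQ_{2^n}}[x^{-1}]$. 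The same description applies to the centre $C_2$, whose cohomology $H^*(C_2;k) = k[t]$ is already polynomial, so that $k^{tC_2} \simeq k^{hC_2}[t^{-1}]$.

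The decisive point is then that the restriction map $H^*(Q_{2^n};k) \to H^*(C_2;k)$ carries the polynomial generator $x$ to a power of $t$ up to radical, so that inverting $x$ upstairs and base-changing along $k^{hQ_{2^n}} \to k^{hC_2}$ produces exactly the same finite localization as forming $k^{tC_2}$ directly. This yields the pushout square above. Since $k^{hQ_{2^n}} \to k^{tQ_{2^n}}$ is a finite localization, the target $k^{tQ_{2^n}}$ is dualizable over $k^{hQ_{2^n}}$, so this is a dualizable base-change; \cref{thm:basechange} then upgrades the Galois extension $k^{hQ_{2^n}} \to k^{hC_2}$ to the desired $Q_{2^n}/C_2$-Galois extension $k^{tQ_{2^n}} \to k^{tC_2}$.

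I expect the main obstacle to be the radical bookkeeping in the middle step: one must be certain that the single polynomial generator inverted to form $k^{tQ_{2^n}}$ restricts, up to radical, to the generator inverted to form $k^{tC_2}$, so that the two localizations are genuinely compatible along restriction to the centre. This is precisely where the Cohen--Macaulay property and the explicit periodic structure of $H^*(Q_{2^n};k)$ do the real work; once the radicals are matched, the pushout square and the dualizability of the base-change are formal, and no further computation is required.
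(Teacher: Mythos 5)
Your proposal is correct and follows essentially the same route as the paper: the paper likewise deduces \cref{thm:ktq} from \cref{thm:khq} by using Greenlees's identification (\cref{thm:greenleestate}) of $k^{tG}$ as a finite localization of $k^{hG}$ away from the augmentation ideal, the Cohen--Macaulay/radical argument to match the localizations upstairs and downstairs, and the dualizable base-change statement of \cref{thm:basechange}. The radical bookkeeping you flag as the main obstacle is exactly the point the paper singles out as the key idea (and notes is a shadow of Quillen's $\mathcal{F}$-isomorphism theorem).
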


\begin{rmk}
This approach holds more generally than for the groups we consider in this paper.  For example, this argument in fact shows that if $A$ is an abelian $p$-group with maximal elementary subgroup $(C_p)^n$, then the natural ring map $k^{tA} \to k^{t(C_p)^n}$ is a $A/(C_p)^n$-Galois extension of $\mathbb{E}_\infty$-rings.

The key idea is to show that the image under the restriction map of the radical of the augmentation ideal $I$ in $k^{hG}$ is the radical of the augmentation ideal $J$ in $k^{hH}$.  

This is essentially the idea of Quillen's $\mathcal{F}$-isomorphism theorem \cite[Theorem 7.1]{Quillen71}, which discusses how for a certain family of elementary abelian subgroups $\mathcal{E}_p(G)$, the restriction map $$H^*(G;k) \to \prod_{E \in \mathcal{E}_p(G)} H^*(E;k)$$ is an isomorphism up to radicals.  Determining to which extent (e.g. the precise groups for which this method holds) is current work in progress.
\end{rmk}

\begin{rmk}
    It is interesting to compare these results to Mathew's \cite[Thm.~9.17]{mathew}, where it's proved that the {\'e}tale fundamental group of $\StMod(kG)$ identifies with the profinite completion of $\ho_1 |\mc{O}_{\mc{A}}(G)|$, which of course simplifies to $G/H$ if $\mc{A} = \langle H\rangle$. In particular, $k^{tG} \to k^{tH}$ can be viewed as the universal cover of~$k^{tG}$.
\end{rmk}

\subsection{Faithful Galois extensions}
\label{faithfulgaloisextensions}

Our next goal is to prove that the Galois extensions $k^{tC_{p^n}} \to k^{tC_p}$ and $k^{tQ_{2^n}}\to k^{tC_n}$ are faithful.  We will repeatedly invoke the following criterion of Rognes to show that our Galois extensions are faithful.

\begin{thm}[{\cite[Prop. 6.3.3]{rognes}}]
\label{thm:rognesfaithful}
	A $G$-Galois extension $f \colon R \to S$ is faithful if and only if the Tate construction $S^{tG}$ is contractible.
\end{thm}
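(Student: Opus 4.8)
The plan is to prove both implications by tracking how the base-change functor $S \otimes_R (\blank)$ interacts with the homotopy-orbit and homotopy-fixed-point functors, using only the two defining equivalences of a (finite) $G$-Galois extension: $R \xrightarrow{\sim} S^{hG}$ and $S \otimes_R S \xrightarrow{\sim} F(G_+,S)$. Recall that $S^{tG}$ is the cofibre of the norm map $S_{hG} \to S^{hG}$, so $S^{tG} \simeq 0$ says precisely that the norm $S_{hG} \to S^{hG} \simeq R$ is an equivalence. Note also that $S^{tG}$ receives a map from $R \simeq S^{hG}$, so it is an $R$-module and the faithfulness hypothesis may legitimately be applied to it.

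For the direction ``$S^{tG} \simeq 0 \Rightarrow$ faithful'', suppose the norm gives $S_{hG} \simeq R$, and let $M$ be an $R$-module with $S \otimes_R M \simeq 0$; I want $M \simeq 0$. Since the $G$-action on $S$ fixes $R$, the object $S \otimes_R M$ carries a $G$-action concentrated on the $S$-factor, and because $(\blank)_{hG} = \operatorname{colim}_{BG}$ is a colimit while $(\blank) \otimes_R M$ preserves colimits, I can interchange them:
\[(S \otimes_R M)_{hG} \simeq S_{hG} \otimes_R M \simeq R \otimes_R M \simeq M\,\text{.}\]
Hence $S \otimes_R M \simeq 0$ forces $M \simeq 0$, so $S$ is faithful. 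This direction is purely formal and needs no dualizability.

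For ``faithful $\Rightarrow S^{tG} \simeq 0$'', the idea is to show that $S \otimes_R S^{tG}$ is \emph{already} contractible, whereupon faithfulness applied to the $R$-module $S^{tG}$ yields $S^{tG} \simeq 0$. Here I invoke the standard fact that a finite $G$-Galois extension makes $S$ dualizable over $R$; this is what licenses commuting $S \otimes_R (\blank)$ past the homotopy fixed points hidden inside $(\blank)^{tG}$. Combining this with the Galois equivalence $S \otimes_R S \simeq F(G_+,S)$ gives
\[S \otimes_R S^{tG} \simeq (S \otimes_R S)^{tG} \simeq F(G_+,S)^{tG} \simeq 0\,\text{,}\]
where the final vanishing is the Tate-construction-of-an-induced-module computation: $F(G_+,S) \simeq \CoInd_e^G S$ is cofree, so its norm map is an equivalence.

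The main obstacle is the bookkeeping in this second chain: one must verify that under the Galois isomorphism $s \otimes s' \mapsto \big(g \mapsto s\cdot g(s')\big)$ the $G$-action on the \emph{right-hand} $S$ (which is the action defining $S^{tG}$) becomes exactly the right-regular action permuting the factors of $F(G_+,S)$, so that the coinduced-module identification, and hence Tate vanishing, applies. The other point needing care is the dualizability of $S$ over $R$; the easy direction above is arranged to avoid it entirely by working only with orbits, which isolates dualizability as the sole nonformal input of the argument.
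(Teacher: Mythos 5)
Your proof is correct, and it is essentially the argument of the source the paper cites for this statement (the paper itself does not reprove it, deferring to Rognes, Prop.~6.3.3): the easy direction via the norm equivalence $S_{hG} \simeq S^{hG} \simeq R$ and the fact that $(\blank) \otimes_R M$ commutes with homotopy orbits, and the converse via dualizability of $S$ over $R$ to commute $S \otimes_R (\blank)$ past the Tate construction, the Galois equivalence $S \otimes_R S \simeq F(G_+,S)$ with the right-translation action, and the vanishing of the Tate construction on coinduced modules. The two points you flag as needing care (the equivariance bookkeeping for the Galois equivalence and the dualizability input) are exactly the inputs Rognes supplies, so nothing is missing.
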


This is especially useful because of the existence of the multiplicative \textbf{Tate spectral sequence}: if~$X$ is a spectrum with a $G$-action for some group $G$, then there is a spectral sequence
\[E_2^{st} = \widehat{H}^s\big(G; \pi_t(X)\big) \Rightarrow \pi_{t-s}(X^{tG})\,\text{,}\]
with differentials $d_r\colon E_r^{s,t} \to E_r^{s+r,t+r-1}$, which lets us compute the homotopy groups of $X^{tG}$ in terms of more readily accessible Tate cohomology groups. Moreover, we can leverage naturality and the cofibre sequence $X_{hG} \to X^{hG} \to X^{tG}$ to compare and import many differentials between the homotopy orbit spectral sequence, the homotopy fixed point spectral sequence, and the Tate spectral sequence.

\begin{rmk}
We remark that it suffices to show that $k^{hG} \to k^{hH}$ is a faithful $G/H$-Galois extension.  By \cref{thm:basechange}, this implies that Galois descent holds for $k^{tC_{p^n}} \to k^{tC_p}$ and $k^{tQ_{2^n}} \to k^{tC_2}$. However, the proof would proceed in exactly the same way (i.e. computing the Tate spectral sequence).  Furthermore, we require the HFPSS calculations involving  $k^{tC_{p^n}} \to k^{tC_p}$ and $k^{tQ_{2^n}} \to k^{tC_2}$ in our Picard spectral sequence calculations.  
\end{rmk}


\subsection{The case of cyclic $p$-groups}
\label{section:cyclicdescent}
\label{ss:Cpfaithful}

\begin{thm}
	\label{thm:ktCpfaithful}
	The $C_{p^{n-1}}$-Galois extension $k^{tC_{p^n}} \to k^{tC_p}$ of $\mathbb{E}_\infty$-rings is faithful.
\end{thm}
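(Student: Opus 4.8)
The plan is to apply Rognes's faithfulness criterion (\cref{thm:rognesfaithful}): the $C_{p^{n-1}}$-Galois extension $k^{tC_{p^n}} \to k^{tC_p}$ is faithful if and only if the Tate construction $(k^{tC_p})^{tC_{p^{n-1}}}$ is contractible. The entire task therefore reduces to showing this iterated Tate construction vanishes, which I would do by running its multiplicative Tate spectral sequence
\[ E_2^{s,t} = \wh{H}^s\big(C_{p^{n-1}}; \pi_t(k^{tC_p})\big) \Ra \pi_{t-s}\big((k^{tC_p})^{tC_{p^{n-1}}}\big) \]
and proving that its $E_\infty$-page is zero.

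For the input, I would identify $\pi_t(k^{tC_p}) \cong \wh{H}^{-t}(C_p;k)$ using \cref{omegaistate} and the computation of Tate homotopy recalled above. Since $C_{p^n}$ is abelian, the residual conjugation action of $C_{p^{n-1}} = C_{p^n}/C_p$ on $H^*(C_p;k)$ is trivial, as is the action on the coefficients $k$; hence the $E_2$-page is a bigraded tensor product of Tate cohomology rings
\[ E_2^{*,*} \cong \wh{H}^*(C_{p^{n-1}};k) \otimes_k \wh{H}^*(C_p;k)\,\text{.} \]
The structural feature I want to exploit is that the base factor $\wh{H}^*(C_{p^{n-1}};k)$ is Laurent, so its degree-$2$ polynomial generator (the class $v^2$ in the case $n=2$, $p=2$) is invertible.

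The crux of the argument is a single transgression differential. The bottom generator $a$ of the fiber factor, generating $\wh{H}^1(C_p;k)$ and sitting in $E_2^{0,-1}$, transgresses under $d_2$ to a class in $E_2^{2,0} = \wh{H}^2(C_{p^{n-1}};k)$. I claim this transgression is precisely the nonzero extension class of the central extension $1 \to C_p \to C_{p^n} \to C_{p^{n-1}} \to 1$; this I would justify by comparing with the ordinary Hochschild--Serre spectral sequence of the extension, which maps into our Tate spectral sequence and in which the transgression of the fiber's degree-one class is the $k$-invariant of the (non-split) extension. As the extension is non-split, $d_2(a)$ is the generator of $\wh{H}^2(C_{p^{n-1}};k)$, hence invertible. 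Multiplying by its inverse shows the unit $1 \in E_2^{0,0}$ is itself a $d_2$-boundary; since the spectral sequence is multiplicative, $1 = 0$ on $E_3$ forces $E_3 = E_\infty = 0$, and a ring spectrum whose unit vanishes is contractible.

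The main obstacle will be pinning down this transgression rigorously in the Tate (rather than homotopy fixed point) spectral sequence: I must ensure the comparison map from Hochschild--Serre is defined and genuinely identifies $d_2(a)$ with the group-extension class, and I must treat uniformly the low cases ($p=2$, $n=2$, where the target is $v^2$) alongside the generic cases (where it is the polynomial or Bockstein generator), together with the odd-primary exterior generators. Once this one differential is established, the collapse to zero is formal from multiplicativity, so all the real content sits in this single transgression.
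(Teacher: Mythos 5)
Your proposal is correct and takes essentially the same route as the paper: Rognes's criterion (\cref{thm:rognesfaithful}), the multiplicative Tate spectral sequence $\wh{H}^s\big(C_{p^{n-1}};\pi_t k^{tC_p}\big) \Ra \pi_{t-s}\big((k^{tC_p})^{tC_{p^{n-1}}}\big)$, and the key $d_2$ on the degree-one fiber class imported from the Hochschild--Serre spectral sequence of the non-split central extension $C_p \to C_{p^n} \to C_{p^{n-1}}$. The only difference is cosmetic and lies in the closing step: the paper extends the differentials to all of the page by the Leibniz rule and checks (separately for $p=2$ and $p$ odd) that every summand is killed, whereas you package the same multiplicativity into the observation that $d_2(a)$ is invertible in the periodic Tate cohomology of the cyclic base, so the unit is a $d_2$-boundary and $E_3 = E_\infty = 0$ formally --- a slightly slicker but equivalent conclusion.
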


\begin{proof}
    Our goal is to compute the Tate spectral sequence 
	\[ E_2^{st} \cong \wh{H}^{s}\big(C_{p^{n-1}};\pi_t k^{tC_p}\big) \Ra \pi_{t - s} (k^{tC_p})^{tC_{p^{n-1}}}\]
	and show that the Tate spectrum $(k^{tC_p})^{tC_{p^{n-1}}}$ is contractible. To do so, we recall that the natural map $k^{hG} \to k^{tG}$ from homotopy fixed points to Tate fixed points allows us to import differentials from the HFPSS
	\[E_2^{st} = H^s\big(C_{p^{n-1}};\pi_t k^{hC_p}\big) \Ra \pi_{t - s}(k^{hC_{p}})^{hC_{p^{n-1}}}\,\text{.}\]
	Note that $(k^{hC_{p}})^{hC_{p^{n-1}}} \simeq k^{hC_{p^n}}$. In fact, this spectral sequence can be identified with the Lyndon--Hochschild--Serre spectral sequence associated to the (central) extension $C_p \to C_{p^{n}} \to C_{p^{n-1}}$, which is well-understood. We review this spectral sequence, distinguishing between the cases where $p = 2$ and $p$ is odd.
	
	If $p = 2$, then the $E_2$-page of the Hochschild--Serre spectral sequence, depicted in \cref{E2lhsC2}, is of the form
		\[E_2^{st} \cong H^s(C_{2^{n-1}};k) \otimes H^{-t}(C_2;k) \cong \left\{
		\begin{array}{ll}
		k[x_1] \otimes k[t_1] & n = 2 \\
		k[x_2] \otimes \Lambda(x_1) \otimes k[t_1] & n \geq 3
		\end{array}
		\right.\]
	Here, $x_1$ is of Adams degree $(-1,1)$, $x_2$ is of degree $(-2,2)$, and $t_1$ is of Adams degree $(-1,0)$. A standard argument shows that $d_2(t_1)$ is nontrivial, whereas $d_2$ vanishes on the remaining generators for degree reasons. By multiplicativity, this determines the remaining differentials. The $E_3$-page has been illustrated in \cref{E3lhsC2}, and the spectral sequence collapses.
	
	\begin{figure}
        \centering
        \includegraphics{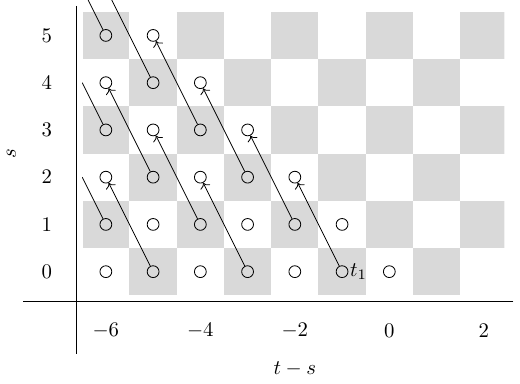}
            \caption{The Adams-graded $E_2$-page of the Hochschild--Serre spectral sequence associated to the extension $C_2 \to C_{2^n} \to C_{2^{n-1}}$.  The circles denote a $k$-summand, and the nonzero differentials have been drawn.}
        \label{E2lhsC2}
    \end{figure}
    \begin{figure}
        \centering
        \includegraphics{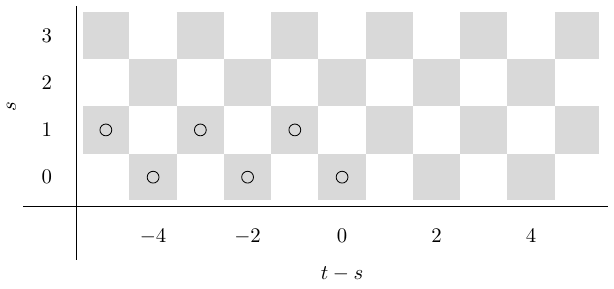}
            \caption{The $E_3$-page of the Hochschild--Serre spectral sequence associated to the extension $C_2 \to C_{2^n} \to C_{2^{n-1}}$. There are no remaining differentials, and the spectral sequence collapses.}
        \label{E3lhsC2}
        \end{figure}
    
    We can now leverage this information to the HFPSS computing $\pi_{*} (k^{tC_{2^n}})$. Recall from \cref{tatereference}
    that the ring $\pi_*(k^{tC_2})$ is isomorphic to $k[t_1^{\pm1}]$, so the $E_2$-page, illustrated in \cref{E2ktCpn}, is now given by
    \[E_2^{s,t} \cong H^s\big(C_{2^{n-1}}; \pi_t(k^{tC_2})\big) \cong \left\{
		\begin{array}{ll}
		k[x_1] \otimes k[t_1^{\pm1}] & n = 2 \\
		k[x_2] \otimes \Lambda(x_1) \otimes k[t_1^{\pm1}] & n \geq 3
		\end{array}
		\right.\]
	By multiplicativity, we can simply extend the differentials of \cref{E2lhsC2} to negative powers of $t_1$ using the Leibniz rule. The $E_3$-page has been drawn out in \cref{E3tateC2}, where it collapses.
    
    \begin{figure}
    \centering
    \includegraphics{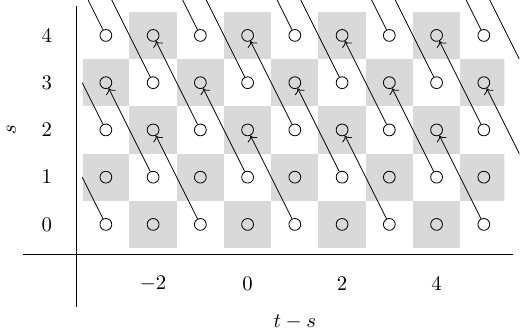}
    \caption{The Adams-graded $E_2$-page of the HFPSS computing the homotopy groups $\pi_*\big(k^{tC_{2^n}}\big)$ for $n \geq 2$. It is effectively just \cref{E2lhsC2} extended to another quadrant.}
    \label{E2ktCpn}
\end{figure}

\begin{figure}
    \centering
    \includegraphics{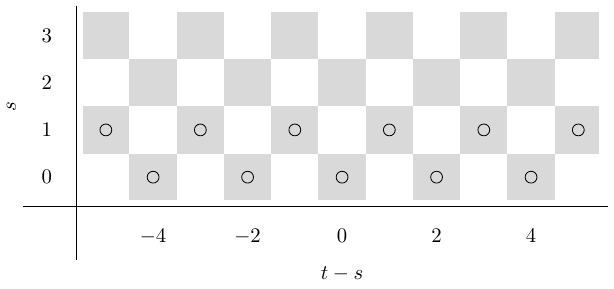}
    \caption{The $E_3$-page of the HFPSS computing $\pi_*(k^{tC_{2^n}})$. There are no remaining differentials, and the spectral sequence collapses.}
    \label{E3tateC2}
\end{figure}
    
    We now use this information to compute the Tate spectral sequence. From \cref{tatereference} we see that passing to Tate cohomology again amounts to inverting the relevant generators on cohomology (namely, $x_1$), and so we simply take the differentials of the HPFSS, and extend to negative $s$-degree by multiplicativity. The $E_2$-page has been drawn in \cref{tcp-E2tatess}. We see that every summand is now killed by a nontrivial differential. The $E_3$-page is therefore empty, and the Tate construction is contractible. By \cref{thm:rognesfaithful}, we have therefore shown that $k^{tC_{2^n}} \to k^{tC_2}$ is a faithful Galois extension.
    
    \begin{figure}
    \centering
    \includegraphics{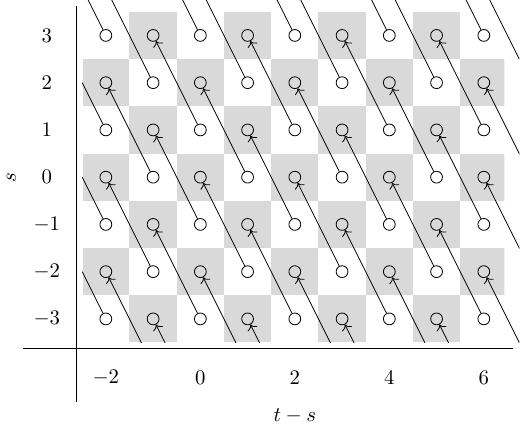}
    \caption{\label{tcp-E2tatess}The Adams-graded $E_2$-page of the Tate spectral sequence computing the homotopy groups $\pi_*\big((k^{tC_p})^{tC_{p^{n-1}}}\big)$ for $n \geq 2$.}
    \end{figure}
    
    If $p$ is odd, the proof technique is the same, though the multiplicative structure of the Hochschild--Serre spectral sequence changes. One now has
    \[E_2^{st} \cong H^s(C_{p^{n-1}};k) \otimes H^{-t}(C_p;k) \cong k[x_2] \otimes \Lambda(x_1) \otimes k[t_2] \otimes \Lambda(t_1)\,\text{,}\]
    with nontrivial differential $d_2(t_1) = x_2$. The $E_2$-page looks identical to \cref{E2lhsC2}, and the $E_3$-page to \cref{E3lhsC2}.
    
    As before, by multiplicativity we can extend this to positive $(t-s)$-degree into the HFPSS for Tate spectra.  Similarly, we can then further extend to negative $s$-degree into the Tate spectral sequence. The $E_2$-page of the latter looks identical to \cref{tcp-E2tatess}, and we conclude that the Tate construction is again contractible. Therefore, $k^{tC_{p^n}} \to k^{tC_p}$ is a faithful Galois extension.
\end{proof}    

\subsection{The case of the quaternion group}
\label{section:galoisquaternion}

\begin{thm}
\label{thm:ktq8faithful}
	The natural ring map $k^{tQ_8} \to k^{tC_2}$ is a \textit{faithful} $(C_2)^2$-Galois extension of $\mathbb{E}_\infty$-rings.
\end{thm}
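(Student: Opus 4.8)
The extension is already known to be $(C_2)^2$-Galois by \cref{thm:ktq} (using $Q_8/C_2 \cong (C_2)^2$), so by \cref{thm:rognesfaithful} the only thing left to establish is that the iterated Tate construction $(k^{tC_2})^{t(C_2)^2}$ is contractible. The plan is to follow the template of \cref{thm:ktCpfaithful}: first record the differentials in the Lyndon--Hochschild--Serre spectral sequence for the central extension $C_2 \to Q_8 \to (C_2)^2$ computing $k^{hQ_8} \simeq (k^{hC_2})^{h(C_2)^2}$, then import them into the HFPSS computing $\pi_* k^{tQ_8} \simeq \pi_*(k^{tC_2})^{h(C_2)^2}$, and finally extend them by the Leibniz rule into the four-quadrant Tate spectral sequence
\[E_2^{st} \cong \wh{H}^s\big((C_2)^2; \pi_t k^{tC_2}\big) \Ra \pi_{t-s}\big((k^{tC_2})^{t(C_2)^2}\big)\,\text{,}\]
whose $E_2$-page is $\wh{H}^*((C_2)^2;k)[t_1^{\pm 1}]$ since $\pi_* k^{tC_2} \cong k[t_1^{\pm 1}]$.

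The essential new input, relative to the cyclic case, is that the base $(C_2)^2$ is not cyclic: its cohomology is the two-variable polynomial ring $H^*((C_2)^2;k) \cong k[y_1,y_2]$ with $|y_1|=|y_2|=1$, and the transgression is governed by the $k$-invariant of the quaternion extension. I expect $d_2(t_1) = q$, where $q = y_1^2 + y_1 y_2 + y_2^2$ is the anisotropic quadratic form classifying $Q_8$ among central extensions of $(C_2)^2$ by $C_2$. Because $d_2(t_1^2) = 2 t_1 \, d_2(t_1) = 0$ in characteristic two, the class $t_1^2$ survives to $E_3$, where Kudo's transgression theorem forces the \emph{nonmultiplicative} differential $d_3(t_1^2) = \mr{Sq}^1 q = y_1 y_2(y_1+y_2)$. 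Every remaining differential is then determined by multiplicativity, and comparison with the known $4$-periodic cohomology ring $H^*(Q_8;k)$ should confirm both that these are the only two families of differentials and that the LHS and homotopy fixed point spectral sequences converge correctly.

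The decisive step is then purely algebraic. Passing from $H^*$ to $\wh{H}^*$ of $(C_2)^2$ amounts to inverting the positive-degree classes, and the two transgression targets $q$ and $\mr{Sq}^1 q = y_1 y_2(y_1+y_2)$ have no common zero over $\bar k$ except the origin: the anisotropic conic $\{q = 0\}$ meets none of the three rational lines $y_1 = 0$, $y_2 = 0$, $y_1+y_2 = 0$ away from the origin. Equivalently, $\sqrt{(q,\, \mr{Sq}^1 q)} = (y_1,y_2)$, the irrelevant ideal, which is exactly the reflection of the fact that the center is the unique elementary abelian subgroup of $Q_8$. Consequently, after inverting to form Tate cohomology the images of $q$ and $\mr{Sq}^1 q$ generate the unit ideal, so the combined effect of the $d_2$ and $d_3$ differentials annihilates the entire $E_\infty$-page. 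Hence $(k^{tC_2})^{t(C_2)^2}$ is contractible, and \cref{thm:rognesfaithful} yields the claim.

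The hard part will be twofold, and both difficulties are precisely the ways in which this computation departs from the cyclic one: the base group is no longer cyclic, so its Tate cohomology is genuinely two-dimensional rather than a single invertible periodicity class, and the governing differential $d_3(t_1^2) = \mr{Sq}^1 q$ is nonmultiplicative, arising from a Steenrod operation via Kudo transgression rather than from the ring structure. The care required lies in tracking how these two inhomogeneous differentials interact across all four quadrants of the Tate spectral sequence and in verifying rigorously that nothing survives to $E_\infty$; the conceptual anchor that makes this work is the radical computation $\sqrt{(q,\, \mr{Sq}^1 q)} = (y_1,y_2)$, which I expect to be the true content of the statement.
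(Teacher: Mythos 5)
Your reduction to the contractibility of $(k^{tC_2})^{t(C_2)^2}$ via \cref{thm:ktq} and \cref{thm:rognesfaithful} is correct, and your two transgressions $d_2(t_1) = q = y_1^2+y_1y_2+y_2^2$ and $d_3(t_1^2) = \mr{Sq}^1 q = y_1y_2(y_1+y_2)$ agree exactly with the paper's. But your decisive step rests on a false premise: for the rank-two group $(C_2)^2$, passing from $H^*$ to $\wh{H}^*$ is \emph{not} ``inverting the positive-degree classes.'' That description is special to cyclic groups (rank one), which is precisely why the cyclic case runs on pure multiplicativity. By Benson--Carlson (\cref{thm:prodnegcohomology} in the appendix), all products of negative-degree classes in $\wh{H}^*((C_2)^2;k)$ vanish; the negative part is governed only by a perfect pairing onto $\wh{H}^{-1}((C_2)^2;k)$. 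So the Tate cohomology is not a localization of $k[y_1,y_2]$, your unit-ideal argument ``$q$ and $\mr{Sq}^1 q$ generate $(1)$ after inverting'' has no meaning there, and the Leibniz rule cannot by itself propagate $d_2$ and $d_3$ into the negative-$s$ region (the paper instead extends them using the duality pairing). Your radical computation $\sqrt{(q,\mr{Sq}^1 q)}=(y_1,y_2)$ is true and is indeed the algebraic shadow of periodicity of $Q_8$, but it does not deliver the conclusion you want.

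Concretely, the claim that ``the combined effect of the $d_2$ and $d_3$ differentials annihilates the entire $E_\infty$-page'' is false: in the paper's computation, after $d_2$ and $d_3$ the Tate spectral sequence still has surviving classes on its $E_4$-page, and these are killed by nontrivial $d_4$-differentials which do \emph{not} come from the HFPSS and are not forced by multiplicativity. Establishing those $d_4$'s is the genuinely new input of the quaternion case relative to the cyclic one. The paper does this with a $3\times 3$ square of cofibre sequences built from $k_{hC_2} \to k^{hC_2} \to k^{tC_2}$ and the norm sequences for $(C_2)^2$: since the bottom row becomes $k_{hQ_8} \to k_{hQ_8} \to 0$, the map $(k^{hC_2})^{t(C_2)^2} \to (k^{tC_2})^{t(C_2)^2}$ is an equivalence, and comparing the two Tate spectral sequences (the source being the truncation of the target to $t\le 0$) forces the $d_4$'s to be nonzero; multiplicativity then propagates them, emptying the page. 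Without this step, or some substitute for it, your argument does not establish contractibility.
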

The reason we treat the $Q_8$ case separately from the generalized quaternion case $Q_{2^n}$ is because the group cohomology and Tate cohomology rings differ between these two cases, as do the resulting differentials.

\begin{proof}
    Our method is the same as in the cyclic $p$-group case: we first study the Hochschild--Serre spectral sequence associated to the extension $C_2 \to Q_{8} \to (C_2)^2$, which can be identified with the HFPSS computing $\pi_*(k^{hQ_8})$.  We then leverage multiplicativity twice to compute the Tate spectral sequence
	\[ E_2^{st} \cong \wh{H}^{s}\big((C_2)^2;\pi_t k^{tC_2}\big) \Ra \pi_{t - s} (k^{tC_2})^{t(C_2)^2}\,\text{.}\]

    \begin{figure}
    \centering
    \includegraphics{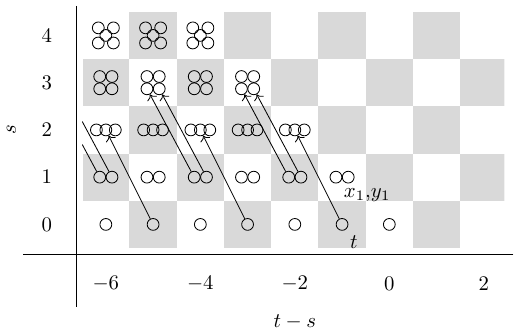}
        \caption{$E_2$-page of the Hochschild--Serre spectral sequence, or equivalently, the $(C_2)^2$-HFPSS, associated to the extension $C_2 \to Q_8 \to (C_2)^2$. To prevent cluttering, we have only illustrated the nonzero differentials for small~$s$. Each circle represents a $k$-summand.}
        \label{hq-E2hfpss}
    \end{figure}
    
    The Hochschild--Serre spectral sequence, regraded to match with the grading conventions of the HFPSS, has $E_2$-page of the form
	\[E_2^{s,t} \cong H^{s}\big((C_2)^2; k\big) \otimes \pi_t(k^{hC_2})\cong k[x_1, y_1] \otimes k[t_1]\,\text{.}\]
	where $x_1$ and $y_1$ are in Adams degree $(-1,1)$ and $t_1$ is in Adams degree $(-1,0)$. To understand the differentials, one can restrict to appropriate subgroups of $Q_8$, which yield natural maps of extensions.  For example, one has \vspace{0mm}
	\[ \begin{tikzcd}[column sep = huge, row sep = huge]
		1 \ar[r] & C_2 \ar[r] \ar[d] & C_4 \ar[r] \ar[d] & C_2 \ar[d] \ar[r] & 1 \\
		1 \ar[r] & C_2 \ar[r] & Q_8 \ar[r] & C_2 \times C_2 \ar[r] & 1 \end{tikzcd}\]
    These extentions induce comparison maps of Hochschild--Serre spectral sequences for $Q_8$ and for $C_4$. The spectral sequence for $C_4$ had been outlined in the previous section, and we infer that $d_2(t_1) = x_1^2 + x_1y_1 + y_1^2$. The $E_2$-page has been drawn out in \cref{hq-E2hfpss}. By e.g. Kudo transgression one then finds that $d_3(t_1^2) = \name{Sq}^1\big(d_2(t_1)\big) = x_1^2 y_1 + x_1 y_1^2$. The $E_3$-page and $E_4$-page have been drawn in \cref{E3lhsQ8} and \cref{hq-E4hfpss}, respectively. Observe that the spectral sequence collapses on the $E_4$-page.
    
\begin{figure}
\centering
    \includegraphics{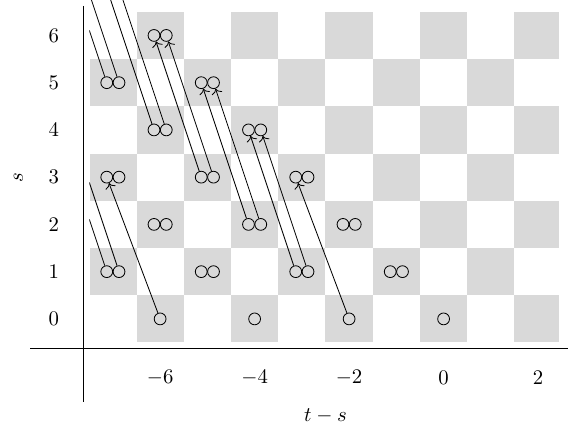}
    \caption{$E_3$-page of the Hochschild--Serre spectral sequence for $C_2 \to Q_8 \to (C_2)^2$.}
\label{E3lhsQ8}
\end{figure}

\begin{figure}
    \centering
    \includegraphics{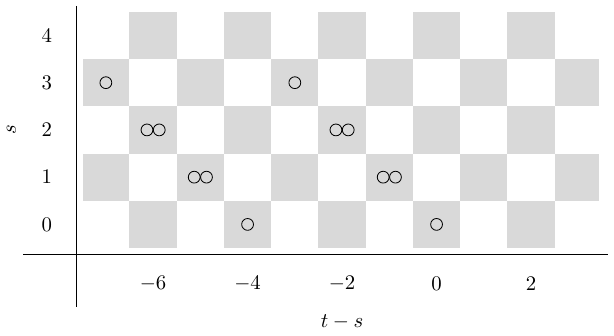}
    \caption{\label{hq-E4hfpss} $E_4$-page of the Hochschild--Serre spectral sequence for $C_2 \to Q_8 \to (C_2)^2$. There are no remaining differentials, and the spectral sequence collapses.}
\end{figure}
	
We can now again compute $\pi_{*} (k^{tQ_8})$ via the HFPSS. Since the ring structure of $\pi_*(k^{tC_2})$ is given simply by $k[t_1^{\pm 1}]$, we may again import all differentials from the Hochschild--Serre spectral sequence and extend using multiplicativity. The $E_2$-page is illustrated in \cref{tq-E2hfpss}. It develops in the expected way: the $E_3$-page is outlined in \cref{E3tateQ8}, and the $E_4$-page in \cref{tq-E4hfpss}.

\begin{figure}
\centering
\includegraphics{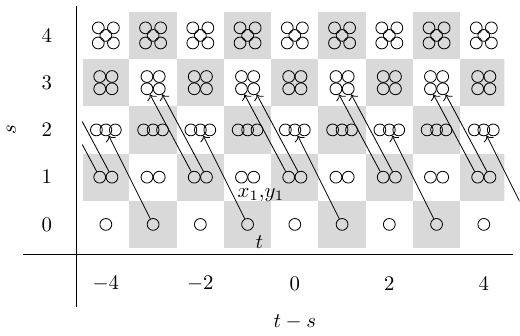}
    \caption{$E_2$-page of the $(C_2)^2$-HFPSS computing $\pi_*(k^{tQ_8})$. It is obtained from \cref{hq-E2hfpss} by inverting $t$.}
    \label{tq-E2hfpss}
\end{figure}

\begin{figure}
\centering
\includegraphics{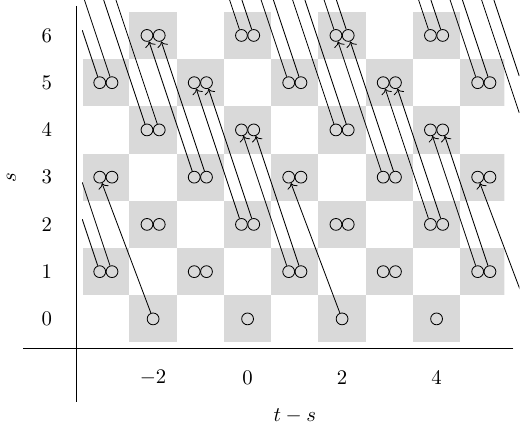}
    \caption{$E_3$-page of the $(C_2)^2$-HFPSS computing $\pi_*(k^{tQ_8})$.}
\label{E3tateQ8}
\end{figure}

\begin{figure}
\centering
\includegraphics{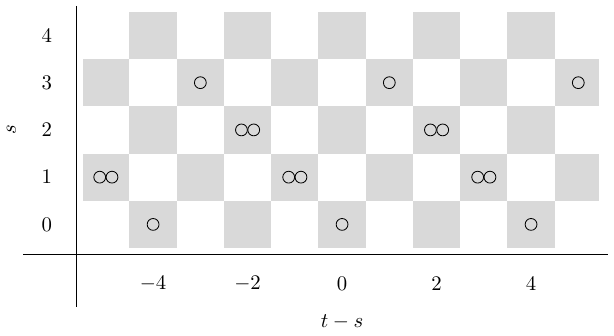}
    \caption{\label{tq-E4hfpss}
    $E_4$-page of the $(C_2)^2$-HFPSS computing $\pi_*(k^{tQ_8})$. Compare with \cref{hq-E4hfpss}.}
\end{figure}

We further extend to negative $s$-degree so as to obtain the Tate spectral sequence
\[ E_2^{st} \cong \wh{H}^{-s}\big((C_2)^2,\pi_{t-s}(k^{tC_2})\big) \Ra \pi_{t-s}\big((k^{tC_2})^{t(C_2)^2}\big)\,\text{.}\]
Here, some care must be taken in extending to the Tate spectral sequence, as the Tate cohomology ring of $(C_2)^2$ isn't just given by a naive Laurent polynomial ring. As computed in \cref{appendixtate}, the multiplicative structure in \textit{nonnegative} degree is identified with that of the cohomology ring. But in negative degrees, we have the following. There is a distinguished element $\alpha$ in $\wh{H}^{-1}\big((C_2)^2;k\big)$, and the cup product yields a perfect pairing $\wh{H}^{r}\big((C_2)^2;k\big) \otimes \wh{H}^{-r-1}\big((C_2)^2;k\big) \to \wh{H}^{-1}\big((C_2)^2;k) \cong \langle \alpha\rangle$. The remaining cup products, in particular all products of negative-degree elements, are zero. In view of the perfect pairing, we denote the negative-degree classes by $\alpha x_1^{-a} y_1^{-b}$, though it is not a cup product of $\alpha$ by some element $x_1^{-a} y_1^{-b}$. 

It is thanks to the pairing that we can extend the differentials to negative $s$-degree. For instance, we have
\[\begin{split}
    d_2(\alpha x_1^{-a} y_1^{-b} \otimes t_1^{-1}) &= d_2(t_1^{-1}) \cdot \alpha x_1^{-a} y_1^{-b} \\
    &= \alpha x_1^{2-a} x_2^{-b} + \alpha x_1^{1-a} x_2^{1-b} + \alpha x_1^{-a} x_2^{2-b}
\end{split} \]
We've drawn the $E_2$-page on \cref{tq-E2tatess}. The $E_3$-page and $E_4$-page of the Tate spectral sequence have been drawn in \cref{tq-E3tatess} and \cref{tq-E4tatess}.

\begin{figure}
\centering
\includegraphics{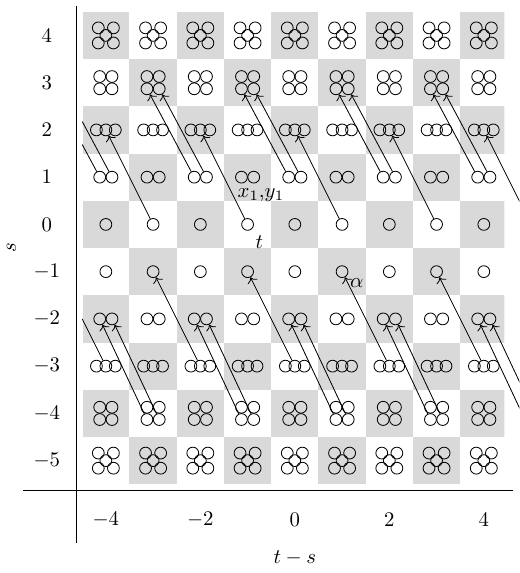}
\caption{$E_2$-page of the Tate spectral sequence computing the homotopy groups $\pi_*\big((k^{tC_2})^{t(C_2)^2}\big)$. To prevent cluttering, only the nonzero differentials for small $s$ are drawn.}
\label{tq-E2tatess}
\end{figure}

\begin{figure}
\centering
\includegraphics{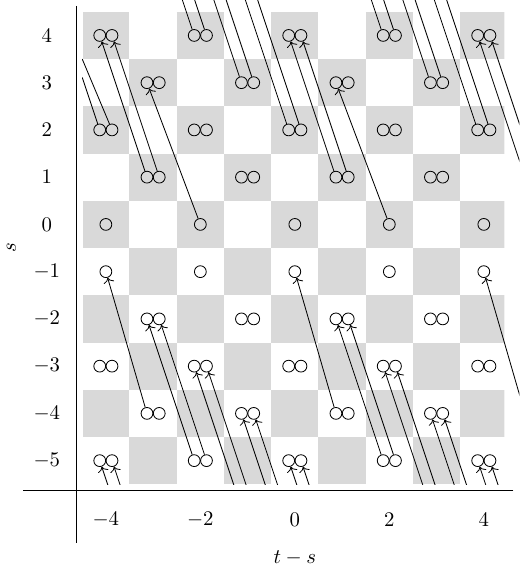}
\caption{$E_3$-page of the Tate spectral sequence computing the homotopy groups $\pi_*\big((k^{tC_2})^{t(C_2)^2}\big)$. All nonzero differentials have been illustrated.}
\label{tq-E3tatess}
\end{figure}

\begin{figure}
\centering
\includegraphics{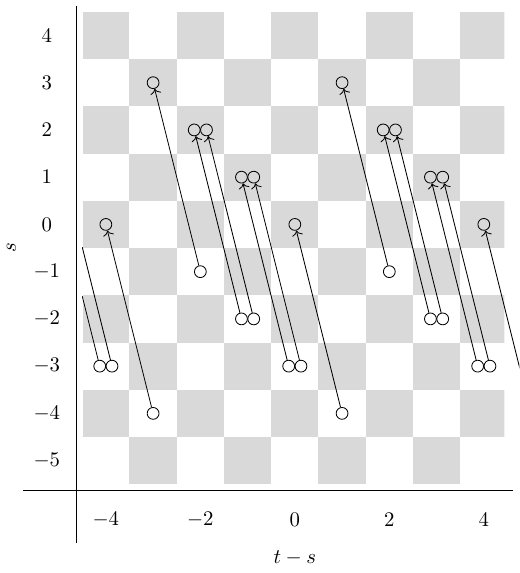}
\caption{$E_4$-page of the Tate spectral sequence computing the homotopy groups $\pi_*\big((k^{tC_2})^{t(C_2)^2}\big)$. These nontrivial differentials do not come from the HFPSS.}
\label{tq-E4tatess}
\end{figure}

In the HFPSS, the spectral sequence collapses at the $E_4$-page for degree reasons, but in the Tate spectral sequence, there's room for nontrivial $d_4$-differentials. We claim that these differentials are indeed nontrivial.  We begin with the following square of cofiber sequences.
\[
\begin{tikzcd}[column sep = large, row sep = large]
(k^{tC_2})_{h(C_2)^2} \arrow[r]           & (k^{tC_2})^{h(C_2)^2} \arrow[r]           & (k^{tC_2})^{t(C_2)^2}           \\
(k^{hC_2})_{h(C_2)^2} \arrow[r] \arrow[u] & (k^{hC_2})^{h(C_2)^2} \arrow[r] \arrow[u] & (k^{hC_2})^{t(C_2)^2} \arrow[u] \\
(k_{hC_2})_{h(C_2)^2} \arrow[r] \arrow[u] & (k_{hC_2})^{h(C_2)^2} \arrow[r] \arrow[u] & (k_{hC_2})^{t(C_2)^2} \arrow[u]
\end{tikzcd}
\]
We can identify the middle term as $k^{hQ_8}$ and the bottom left term as $k_{hQ_8}$. Moreover, thanks to \cref{thm:ktq} we can identify the top middle term with $k^{tQ_8}$. This simplifies the diagram to
\[
\begin{tikzcd}[column sep = large, row sep = large]
(k^{tC_2})_{h(C_2)^2} \arrow[r]           & k^{tQ_8} \arrow[r]           & (k^{tC_2})^{t(C_2)^2}           \\
(k^{hC_2})_{h(C_2)^2} \arrow[r] \arrow[u] & k^{hQ_8} \arrow[r] \arrow[u] & (k^{hC_2})^{t(C_2)^2} \arrow[u] \\
k_{hQ_8} \arrow[r] \arrow[u]     & k_{hQ_8} \arrow[r] \arrow[u] & 0 \arrow[u]
\end{tikzcd}
\]
This forces the map $(k^{hC_2})^{t(C_2)^2} \to (k^{tC_2})^{t(C_2)^2}$ to be an isomorphism. Now, to both we may functorially asssociate a Tate spectral sequence. The $E_4$-page of $(k^{tC_2})^{t(C_2)^2}$ has been illustrated in \cref{tq-E4tatess}, and that of $(k^{hC_2})^{t(C_2)^2}$ is the same but truncated so as to live in $t$-degree $\leq 0$. The comparison map of spectral sequences is the obvious one. This comparison forces the $d_4$-differentials in \cref{tq-E4tatess} for $t - s > 1$ to be nontrivial, and by multiplicativity, this nontriviality propagates to negative $(t-s)$-degree.

Therefore, the $E_\infty$-page empty and the Tate construction $(k^{tC_2})^{t(C_2)^2}$ is contractible. Therefore, $k^{tQ_8} \to k^{tC_2}$ is a faithful Galois extension.
\end{proof}

\begin{rmk}
This argument also implies that $k^{hQ_8} \to k^{hC_2}$ is a faithful Galois extension, and by the commutative square above, we also have equivalences $(k^{hC_2})_{h(C_2)^2} \simeq (k^{hC_2})^{h(C_2)^2}$ and $(k^{tC_2})_{h(C_2)^2} \simeq (k^{tC_2})^{h(C_2)^2}$.
\end{rmk}

\subsection{The case of generalized quaternion groups}
\label{generalizedquaternionfaithful}

\begin{thm}
\label{thm:ktq2nfaithful}
	The natural ring map $k^{tQ_{2^n}} \to k^{tC_2}$ is a \textit{faithful} $Q_{2^n}/C_2$-Galois extension of $\mathbb{E}_\infty$-rings.
\end{thm}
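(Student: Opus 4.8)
The strategy mirrors the cyclic and $Q_8$ cases. Write $D \coloneqq Q_{2^n}/C_2$; for $n \geq 4$ this is the dihedral group of order $2^{n-1}$. By \cref{thm:rognesfaithful}, it suffices to prove that the Tate construction $(k^{tC_2})^{tD}$ is contractible, and the plan is to compute the Tate spectral sequence
\[ E_2^{st} \cong \wh{H}^s\big(D; \pi_t k^{tC_2}\big) \Ra \pi_{t-s}\big((k^{tC_2})^{tD}\big) \]
and show that its $E_\infty$-page vanishes. Here $\pi_* k^{tC_2} \cong k[t_1^{\pm 1}]$ and, as recorded in \cref{appendixtate}, $\wh{H}^*(D;k)$ refines the cohomology ring $H^*(D;k) \cong k[x_1,y_1,w_2]/(x_1 y_1)$ (with $|x_1| = |y_1| = 1$ and $|w_2| = 2$) by a Tate-duality pairing in negative degrees, just as the $(C_2)^2$ computation did for $Q_8$.

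To determine the differentials I would first study the Hochschild--Serre spectral sequence of the central extension $C_2 \to Q_{2^n} \to D$, identified with the HFPSS converging to $\pi_* k^{hQ_{2^n}}$, whose $E_2$-page is $\big(k[x_1,y_1,w_2]/(x_1 y_1)\big) \otimes k[t_1]$. The transgression $d_2(t_1) \in H^2(D;k)$ is determined by naturality along maps of extensions: restricting to the cyclic subgroup $\langle\theta\rangle \cong C_{2^{n-1}}$ (whose image in $D$ is the rotation subgroup $C_{2^{n-2}}$) detects the $w_2$-component via the cyclic computation underlying \cref{thm:ktCpfaithful}, while restricting to a quaternion subgroup $Q_8 \leq Q_{2^n}$ detects the degree-one contributions. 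Kudo transgression then yields $d_3(t_1^2) = \name{Sq}^1\big(d_2(t_1)\big)$, and the period-$4$ structure of $\wh{H}^*(Q_{2^n};k)$ forces $t_1^4$ to be a permanent cycle. Inverting $t_1$ imports these differentials into the HFPSS for $k^{tQ_{2^n}}$, and the duality pairing propagates them into negative $s$-degree in the Tate spectral sequence above, exactly as in \cref{thm:ktq8faithful}.

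As in the $Q_8$ case, the decisive input is a family of differentials in the Tate spectral sequence that are invisible to the HFPSS for $k^{tQ_{2^n}}$. To produce them I would apply the $C_2$- and $D$-Tate sequences to $k$ and assemble the resulting square of cofibre sequences
\[
\begin{tikzcd}[column sep = large, row sep = large]
(k^{tC_2})_{hD} \arrow[r] & k^{tQ_{2^n}} \arrow[r] & (k^{tC_2})^{tD} \\
(k^{hC_2})_{hD} \arrow[r] \arrow[u] & k^{hQ_{2^n}} \arrow[r] \arrow[u] & (k^{hC_2})^{tD} \arrow[u] \\
k_{hQ_{2^n}} \arrow[r] \arrow[u] & k_{hQ_{2^n}} \arrow[r] \arrow[u] & 0 \arrow[u]
\end{tikzcd}
\]
using the identifications $k^{tQ_{2^n}} \simeq (k^{tC_2})^{hD}$ from \cref{thm:ktq}, $k^{hQ_{2^n}} \simeq (k^{hC_2})^{hD}$ from \cref{thm:khq}, and the iterated orbit description $k_{hQ_{2^n}} \simeq (k_{hC_2})_{hD}$. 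The diagram chase forces the bottom-right corner $(k_{hC_2})^{tD}$ to vanish, and hence the top-right vertical map $(k^{hC_2})^{tD} \to (k^{tC_2})^{tD}$ to be an equivalence. Since $t_1$ is invertible, the Tate spectral sequence for $(k^{tC_2})^{tD}$ is $t_1$-periodic, whereas that for $(k^{hC_2})^{tD}$ is merely its truncation to $t \leq 0$; the equivalence of the two targets then forces the extra differentials to be nonzero, and $t_1$-periodicity spreads them across the whole plane, so that $E_\infty$ is empty and $(k^{tC_2})^{tD}$ is contractible. Faithfulness follows from \cref{thm:rognesfaithful}.

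The main obstacle I anticipate is twofold. First, pinning down the transgression $d_2(t_1)$ in the dihedral setting is more delicate than for the Klein-four quotient of $Q_8$: the relation $x_1 y_1 = 0$, the extra polynomial generator $w_2$, and the fact that the two degree-one classes are no longer interchangeable mean that the restriction computations must be carried out with care. Second, and relatedly, one must control the Tate cohomology $\wh{H}^*(D;k)$ of the dihedral group in negative degrees, whose module structure and duality pairing are considerably more intricate than those of $(C_2)^2$; this bookkeeping in the Tate spectral sequence is the technical heart of the argument. Once the $E_2$-page and the transgressions are correctly identified, the remaining steps run parallel to \cref{thm:ktq8faithful}.
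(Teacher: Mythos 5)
Your proposal follows the paper's proof essentially step for step: both reduce faithfulness to contractibility of $(k^{tC_2})^{tD_{2^{n-1}}}$ via \cref{thm:rognesfaithful}, determine the Hochschild--Serre/HFPSS differentials by restriction to subgroups and Kudo transgression, extend them to the Tate spectral sequence through the Tate-duality pairing on $\wh{H}^*(D_{2^{n-1}};k)$, and invoke the identical square of cofibre sequences to show the map $(k^{hC_2})^{tD_{2^{n-1}}} \to (k^{tC_2})^{tD_{2^{n-1}}}$ is an equivalence, forcing the hidden differentials that empty the $E_\infty$-page. The only differences are cosmetic: your $w_2$ is the paper's $z_2$ (rewritten there via $u_1 = x_1 + y_1$ to streamline the multiplicative structure), and you spell out which subgroups to restrict to where the paper leaves them implicit.
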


\begin{proof}
    Our method is the same as in the previous cases. In fact, the associated spectral sequence diagrams look exactly the same as in the $Q_8$ case; the only difference is that the multiplicative structure changes.
    
    We first study the Hochschild--Serre spectral sequence associated to the extension $C_2 \to Q_{2^n} \to D_{2^{n-1}}$, and then we extend this spectral sequence to produce the four-quadrant Tate spectral sequence
	\[ E_2^{st} \cong \wh{H}^{s}\big(D_{2^{n-1}};\pi_t k^{tC_2}\big) \Ra \pi_{t - s} (k^{tC_2})^{tD_{2^{n-1}}}\,\text{.}\]
	For all $n \geq 4$, the cohomology ring $H^*(D_{2^{n-1}};k)$ is given by $k[x_1,y_1,z_2] / (x_1y_1)$, where $|x_1| = |y_1| = 1$ and $|z_2| = 2$. 
	Moreover, $\name{Sq}^1(z_2) = (x_1+y_1)z_2$.  It is convenient to set $u_1 = x_1 + y_1$ and write the cohomology ring as $k[x_1,u_1,z_2] / (u_1 x_1 + x_1^2)$. 
	
	Since $C_2$ is central in $Q_{2^n}$, the $E_2$-page of the Hochschild--Serre spectral sequence has the form
	\[ E_2^{st} \cong H^s(D_{2^{n-1}};k) \otimes \pi_t(k^{hC_2}) \cong k[x_1,u_1,z_2] / (u_1x_1 + x_1^2) \otimes k[t_1]\,\text{.}\]
	We have a nontrivial $d_2$-differential $d_2(t_1) = u_1^2 + z_2$, as can be computed by restricting to appropriate subgroups of $Q_{2^n}$, and by Kudo transgression, we have $d_3(t_1^2) = u_1 z_2$. These again spawn all the other differentials via the Leibniz rule. Although the multiplicative generators are different, the $E_2$-, $E_3$-, and $E_4$-page look exactly the same as those for $Q_8$ --- cf. \cref{hq-E2hfpss}, \cref{E3lhsQ8}, and \cref{hq-E4hfpss}. 

    We extend the spectral sequence using multiplicativity to the HFPSS computing $\pi_*(k^{tQ_{2^n}})$. Again, since $\pi_*(k^{tC_2})$ is simply $k[t_1^{\pm 1}]$, we can extend without much issue. The pages are again identical to $Q_8$, and are illustrated in \cref{tq-E2hfpss}, \cref{E3tateQ8}, and \cref{tq-E4hfpss}. We then further extend to the Tate spectral sequence. As in the $Q_8$ case, some care must be taken when extending, because the multiplicative structure of $\wh{H}^{*}(D_{2^{n-1}};k)$ is nontrivial. As shown in \cref{tatereference}, the Tate cohomology ring is the usual cohomology ring in positive degrees, and there's again a perfect pairing onto $\wh{H}^{-1}(D_{2^{n-1}};k) \cong \langle \alpha\rangle$, and we use the perfect pairing to extend the differentials to negative $s$-degree. The $E_2$- and $E_3$-page look the same as in \cref{tq-E2tatess} and \cref{tq-E3tatess}.
    
    For the same reason as in $Q_8$, there is room for nontrivial differentials on the $E_4$-page of the Tate spectral sequence. The proof that they are indeed nontrivial is exactly the same: one has the square of cofibre sequences
    \[
    \begin{tikzcd}[column sep = large, row sep = large]
    (k^{tC_2})_{hD_{2^{n-1}}} \arrow[r]           & k^{tQ_{2^n}} \arrow[r]           & (k^{tC_2})^{tD_{2^{n-1}}}           \\
    (k^{hC_2})_{hD_{2^{n-1}}} \arrow[r] \arrow[u] & k^{hQ_{2^n}} \arrow[r] \arrow[u] & (k^{hC_2})^{tD_{2^{n-1}}} \arrow[u] \\
    k_{hQ_{2^n}} \arrow[r] \arrow[u]     & k_{hQ_{2^n}} \arrow[r] \arrow[u] & 0 \arrow[u]           
    \end{tikzcd}
    \]
    which implies that the map $(k^{hC_2})^{tD_{2^{n-1}}} \to (k^{tC_2})^{tD_{2^{n-1}}}$ is an equivalence. This forces the nontriviality of some $d_4$-differentials, and the nontriviality of all other differentials then follows by multiplicativity. Thus the Tate construction is again contractible. Therefore, $k^{tQ_{2^n}} \to k^{tC_2}$ is a faithful Galois extension.
\end{proof}

\begin{rmk}
\label{rmk:abstractdescent}
Though we have provided explicit proofs that these Galois extensions are faithful, this is abstractly true through descent theory, which we now sketch.  We are grateful to Akhil Mathew for pointing out this argument.

Recall that we say that a commutative algebra object $A \in \mathcal{C}$ admits descent if the thick $\otimes$-ideal generated by $A$ is all of $\mathcal{C}$.  The following proposition follows from a standard thick tensor ideal argument.

\begin{prop}{\cite[Prop. 3.19]{mathew}}]
If $A \in \mathsf{CAlg}(\mathcal{C})$ admits descent, then $A$ is faithful.
\end{prop}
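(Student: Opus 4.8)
The plan is to run the standard thick tensor-ideal argument that the surrounding text alludes to. Recall that, in the present setting, $A$ being \emph{faithful} means that whenever $M \in \mc{C}$ satisfies $A \otimes M \simeq 0$, we must already have $M \simeq 0$ (this is the module-theoretic notion of faithfulness from the definition above, applied to $A$ regarded as a module over the unit of $\mc{C}$). So I would fix such an object $M$, assume $A \otimes M \simeq 0$, and aim to deduce $M \simeq 0$. The key device is to introduce the full subcategory
\[\mc{D} \coloneqq \big\{X \in \mc{C} \ | \ X \otimes M \simeq 0\big\}\,\text{,}\]
and to show that $\mc{D}$ is a thick $\otimes$-ideal of $\mc{C}$ which contains $A$.

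First I would verify that $\mc{D}$ is thick. Since the endofunctor $\blank \otimes M$ is exact, it preserves cofibre sequences, suspensions and desuspensions, and retracts; consequently $\mc{D}$ is closed under all of these operations and is therefore thick. Next I would check that $\mc{D}$ is a $\otimes$-ideal: for any $Y \in \mc{C}$ and $X \in \mc{D}$, associativity of the tensor product gives $(Y \otimes X) \otimes M \simeq Y \otimes (X \otimes M) \simeq Y \otimes 0 \simeq 0$, so $Y \otimes X \in \mc{D}$. Finally, the hypothesis $A \otimes M \simeq 0$ says precisely that $A \in \mc{D}$.

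With these facts in hand, I would invoke the descent hypothesis. By definition, $A$ admitting descent means that the thick $\otimes$-ideal generated by $A$ is all of $\mc{C}$. Since $\mc{D}$ is itself a thick $\otimes$-ideal containing $A$, it must contain the thick $\otimes$-ideal generated by $A$, and hence $\mc{D} = \mc{C}$. In particular the unit lies in $\mc{D}$, which yields $M \simeq \mb{1}_{\mc{C}} \otimes M \simeq 0$, as desired.

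The argument is essentially formal, so I do not expect a genuine obstacle; the only point requiring a little care is the verification that $\mc{D}$ is closed under the operations defining a thick $\otimes$-ideal, which reduces entirely to the exactness of $\blank \otimes M$ together with the associativity and unitality of the symmetric monoidal structure. The other thing worth pinning down at the outset is that the notion of \textbf{faithful} being proved here is the same one used in \Cref{thm:rognesfaithful} and in the definition of faithful Galois extensions, so that the proposition indeed feeds into the abstract descent discussion as claimed.
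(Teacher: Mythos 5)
Your proof is correct and is precisely the ``standard thick tensor ideal argument'' that the paper invokes without spelling out: you fix $M$ with $A \otimes M \simeq 0$, observe that $\mc{D} = \{X \in \mc{C} \mid X \otimes M \simeq 0\}$ is a thick $\otimes$-ideal containing $A$ (by exactness of $\blank \otimes M$ and associativity), and conclude from the descent hypothesis that $\mc{D} = \mc{C}$, so $M \simeq \mb{1}_{\mc{C}} \otimes M \simeq 0$. This fills in exactly the details the paper leaves implicit, so there is nothing to correct.
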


Let $\mathcal{E}_p$ be the family of elementary abelian $p$-subgroups of $G$.  For each subgroup $H \in \mathcal{E}_p$, we can form the commutative algebra object $A_H := \prod_{G/H}k \in \mathsf{CAlg}(\mathsf{Fun}(BG, \Mod(k)))$.

\begin{prop}[{\cite[Prop. 9.13]{mathew}}]
The commutative algebra object $$A := \prod_{H \in \mathcal{E}_p}\big(\prod_{G/H}k\big) \in \mathsf{CAlg}(\mathsf{Fun}(BG, \Mod(k)))$$
admits descent.
\end{prop}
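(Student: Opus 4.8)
The plan is to unravel the definition of descent given above. Admitting descent means that the thick $\otimes$-ideal $\operatorname{thick}^{\otimes}(A)$ generated by $A$ is all of $\mc{C} = \mathsf{Fun}(BG,\Mod(k))$. Since a thick $\otimes$-ideal is by definition closed under tensoring with arbitrary objects, and the monoidal unit of $\mc{C}$ is the trivial representation $k$, the containment $k \in \operatorname{thick}^{\otimes}(A)$ already forces $X \simeq X \otimes k \in \operatorname{thick}^{\otimes}(A)$ for every $X$. Thus the entire content is the single statement $k \in \operatorname{thick}^{\otimes}(A)$. Following Mathew--Naumann--Noel \cite{MNNnilpotence2015-jf}, this is precisely the assertion that $A$ is \emph{descendable}: writing $I = \operatorname{fib}(k \to A)$ for the augmentation ideal, it is equivalent to the $\otimes$-nilpotence of the map $I \to k$, i.e. to the existence of an $N$ with $I^{\otimes N} \to k$ nullhomotopic.

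First I would check descent locally, after restricting along each elementary abelian $p$-subgroup $E \in \mc{E}_p$. The restriction functor $\Res^G_E \colon \mathsf{Fun}(BG,\Mod(k)) \to \mathsf{Fun}(BE,\Mod(k))$ is symmetric monoidal, and since $E$ itself belongs to $\mc{E}_p$, the factor $A_E = \CoInd_E^G k$ is a retract of the finite product $A$. The Mackey double-coset formula computes $\Res^G_E \CoInd_E^G k$ as a sum over $E \backslash G / E$ of modules (co)induced from the intersections $E \cap {}^gE$; the double coset of the identity contributes a copy of the unit $k$ of $\mathsf{Fun}(BE,\Mod(k))$ as a direct summand. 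Hence $k$ is a retract of $\Res^G_E A$, and in particular $k \in \operatorname{thick}^{\otimes}(\Res^G_E A)$; equivalently, $\Res^G_E A$ is descendable over $E$, so $\Res^G_E(I \to k)$ is $\otimes$-nilpotent.

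It remains to globalize, and this is the main obstacle. The point is that the family of restriction functors $\{\Res^G_E\}_{E \in \mc{E}_p}$ jointly \emph{detects} $\otimes$-nilpotence: a map $f$ in $\mathsf{Fun}(BG,\Mod(k))$ is $\otimes$-nilpotent as soon as each $\Res^G_E f$ is. This is the homotopical incarnation of Quillen's $\mc{F}$-isomorphism theorem \cite{Quillen71} --- the statement that the restriction map $H^*(G;k) \to \prod_{E} H^*(E;k)$ is an isomorphism up to nilpotents --- upgraded from the endomorphisms of the unit to arbitrary maps; in this generality it is due to Mathew--Naumann--Noel \cite{MNNnilpotence2015-jf}. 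Applying it to $f = (I \to k)$, whose restrictions are $\otimes$-nilpotent by the previous paragraph, we conclude that $I \to k$ is $\otimes$-nilpotent over $G$, i.e. $k \in \operatorname{thick}^{\otimes}(A)$, so $A$ admits descent. I expect the formal reduction and the Mackey computation to be routine; the genuine input, and the only step that is not soft, is the joint nilpotence-detection of restriction to elementary abelians, which is exactly where Quillen's stratification enters.
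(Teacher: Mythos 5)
Your two preparatory steps are correct: the reduction of ``admits descent'' to $\otimes$-nilpotence of $I \to \mathbf{1}$ is the standard equivalence, and the double-coset computation does exhibit the unit of $\mathsf{Fun}(BE,\Mod(k))$ as a retract of $\Res^G_E A$ for each $E \in \mathcal{E}_p$ (so that $\Res^G_E I \to k$ is in fact nullhomotopic, not merely $\otimes$-nilpotent). Note, for calibration, that the paper does not prove this proposition at all --- it quotes it verbatim as \cite[Prop.~9.13]{mathew} --- so the relevant comparison is with Mathew's proof.

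The problem is the globalization step, and it is a circularity rather than a false claim. The ``joint detection of $\otimes$-nilpotence by restriction to elementary abelians'' that you invoke from \cite{MNNnilpotence2015-jf} is not a softer or more primitive input: it is \emph{equivalent} to the proposition you are proving. Your own argument gives one direction (detection plus the local retracts implies descendability), and the converse is the standard consequence of descendability, using the projection formula $X \otimes \CoInd_E^G k \simeq \CoInd_E^G \Res^G_E X$. In Mathew--Naumann--Noel, the detection statement is obtained precisely as a corollary of the theorem that Borel-equivariant $H\mathbb{F}_p$ is $\mathcal{E}_p$-nilpotent --- which \emph{is} this proposition --- and that theorem is in turn deduced from Quillen's work by a genuine argument (an induction on $|G|$ using the Quillen--Venkov lemma, or Serre's theorem on products of Bocksteins). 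So if one unwinds your citation, the proof passes through the statement being proved; as written, it is a citation of an equivalent statement, which is no more a proof than the paper's own citation of the exact statement. What an actual proof must supply --- and what your attempt black-boxes --- is the passage from Quillen's theorem, a statement about cohomology \emph{rings}, to a thick-$\otimes$-ideal statement. Mathew does this by applying his Noetherian descendability criterion (a map of $\mathbb{E}_\infty$-rings whose homotopy is finitely generated over a Noetherian source and surjective on Zariski spectra admits descent) to $k \to A$: on endomorphisms of the unit this map is $H^*(G;k) \to \prod_{E \in \mathcal{E}_p} H^*(E;k)$, where Evens--Venkov supplies Noetherianity and finite generation, and Quillen's stratification theorem \cite{Quillen71} supplies surjectivity on $\operatorname{Spec}$. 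That step, in some form, cannot be avoided.
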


Recall that we have a localization functor $\mathsf{Fun}(BG, \Mod(k)) \to \mathsf{StMod}(kG)$.  We denote the image of $A_H$ and $A$ under this functor by $\mathcal{A}_H$ and $\mathcal{A}$ respectively.  It follows that $\mathcal{A}$ is descendable in $\mathsf{StMod}(kG)$. In fact, by work of Balmer \cite{balmer} (cf. \cite{mathew} 9.12), we have an equivalence  $\Mod_{\mathsf{StMod}(kG)}(\mathcal{A}_H) \simeq \mathsf{StMod}(kH)$.  Moreover, we can identity the adjunction $\mathsf{StMod}(kG) \rightleftarrows \Mod_{\mathsf{StMod}(kG)}(\mathcal{A}_H)$ with the restriction-coinduction adjunction $\mathsf{StMod}(kG) \rightleftarrows \mathsf{StMod}(kH)$.

For the groups we consider (cyclic $p$-groups and generalized quaternion groups), there is a single elementary abelian $p$-subgroup.  Thus $\mathcal{A} = \mathcal{A}_H$, and the cobar construction for $\mathcal{A}$ exhibits the equivalence $\mathsf{StMod}(kG) \simeq  \mathsf{StMod}(kH)^{hG/H}$. In particular, this implies that the morphism $k^{tG} \to k^{tH}$ admits descent, and hence $k^{tH}$ is faithful.
\end{rmk}

\section{Computation of endotrivial modules}
\label{sectionendotrivial}

In this chapter, we will evaluate the limit spectral sequence to compute the group of endotrivial modules for the cyclic $p$-groups and generalized quaternion groups. In these cases, we saw that the limit decomposition can be re-interpreted as an instance of Galois descent. 

Accordingly, the limit spectral sequence for $\Omega$ is a familiar object. Indeed, by \cref{omegaistate}, $\Omega \StMod(kG)$ is simply $k^{tG}$, and the limit spectral sequence is simply an extension of the Hochschild--Serre spectral sequence to two quadrants. We have already evaluated this spectral sequence in \cref{sectiongaloisdescent}, under the guise of an HFPSS computing $\pi_*(k^{tG})$.

We can thus compute the homotopy groups of the Picard spectrum via the homotopy fixed point spectral sequence (see \cref{picspectralsequence}), which takes the form 
\[E_2^{st} = H^s\big(G/H;\pi_t \mf{pic}\StMod(kH)\big) \Ra \pi_{t - s}\big(\mf{pic}\,{\StMod}(kG)\big)\,\text{.}\]

Recall that we understand the $E_2$ page of the spectral sequence well by \cref{homotopygroupsofPic}. In \cref{section:cyclicdescent}, we have already computed $\pi_n(k^{tC_p}) \cong k$ for all $p$ and $n$. It remains to compute $\pi_0(\pic(k^{tC_p})) \cong \Pic(k^{tC_p})$, but this is the content of Dade's theorem (\cref{reference}). 

\begin{prop}
The homotopy groups of $\pic(k^{tC_p})$ are given by:
$$\pi_*(\pic(k^{tC_p})) \cong 
\left\{
		\begin{array}{ll}
			\Pic(k^{tC_p}) \cong 1 & * = 0, \ p = 2 \\
			\Pic(k^{tC_p}) \cong C_2 & * = 0, \ p > 2 \\
			k^\times & * = 1\\
			 k & * \geq 2
		\end{array}
		\right.
$$    
\end{prop}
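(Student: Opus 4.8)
The plan is to apply \cref{homotopygroupsofPic} to the symmetric monoidal $\infty$-category $\Mod(k^{tC_p})$, whose Picard spectrum is by definition $\pic(k^{tC_p})$. The key observation enabling this is that the endomorphism $\bb{E}_\infty$-ring of the unit object is $\Omega\,\Mod(k^{tC_p}) \simeq k^{tC_p}$, since $\End_{k^{tC_p}}(k^{tC_p}) \simeq k^{tC_p}$. Once this identification is in place, the computation cleanly splits into two independent tasks: determining the higher homotopy groups, which are controlled directly by $\pi_* k^{tC_p}$, and determining $\pi_0$, which is the Picard group itself.

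For the positive-degree entries, I would invoke the computation of $\pi_*(k^{tC_p})$ already carried out in \cref{section:cyclicdescent}, where we found $\pi_n(k^{tC_p}) \cong k$ for every $n$. By \cref{homotopygroupsofPic}, for $t \geq 2$ one has $\pi_t\,\pic(k^{tC_p}) \cong \pi_{t-1}(k^{tC_p}) \cong k$, while for $t = 1$ one has $\pi_1\,\pic(k^{tC_p}) \cong \pi_0(k^{tC_p})^\times \cong k^\times$. This accounts for every entry of the proposition in degree $\geq 1$.

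The only remaining entry is $\pi_0\,\pic(k^{tC_p}) \cong \Pic(k^{tC_p})$. Here I would use that $C_p$ is a $p$-group, so by the remark following \cref{omegaistate} there is a symmetric monoidal equivalence $\StMod(kC_p) \simeq \Mod(k^{tC_p})$. Hence $\Pic(k^{tC_p}) \cong \Pic\big(\StMod(kC_p)\big) = T(C_p)$, and Dade's theorem (recorded as \cref{reference}, with $n = 1$) evaluates this group: it is trivial when $p = 2$ and isomorphic to $C_2$ when $p$ is odd, exactly as asserted.

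Since every step is a direct appeal to a result established earlier in the paper, there is no genuine obstacle. The only point meriting a moment of care is the identification of $\pi_0(k^{tC_p})$ with the field $k$ \emph{as a ring} (not merely as a $k$-vector space), which is what makes the degree-one homotopy group the unit group $k^\times$ rather than something larger. This is immediate from the multiplicative structure of the Tate construction, under which $\pi_0(k^{tC_p}) \cong \wh{H}^{0}(C_p;k)$ is the unit summand $k \cdot 1$ of the Tate cohomology ring.
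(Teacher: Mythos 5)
Your proposal is correct and takes essentially the same route as the paper: both apply \cref{homotopygroupsofPic} to $\Mod(k^{tC_p})$, feed in the computation $\pi_*(k^{tC_p}) \cong k$ from \cref{section:cyclicdescent} (ultimately \cref{tatereference}), and settle $\pi_0$ by Dade's theorem (\cref{reference}) via the identification of $\Mod(k^{tC_p})$ with $\StMod(kC_p)$ for the $p$-group $C_p$. Your explicit attention to the ring-level identification $\pi_0(k^{tC_p}) \cong \wh{H}^0(C_p;k) \cong k$, and to the Schwede--Shipley equivalence behind $\Pic(k^{tC_p}) \cong T(C_p)$, merely spells out steps the paper leaves implicit.
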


Moreover, the identification of the limit spectral sequence directly brings at our disposal the comparison tools of Mathew--Stojanoska, \cref{comparisontool} and \cref{unstabledifferential}, which allows us to import differentials from the analogous spectral sequence computing the homotopy groups of $\Omega \mc{C} \coloneqq \End(\mb{1}_{\mc{C}})$. In view of this, we will see that most of the work which remains will be to compute some unstable differentials.

\subsection{The case of cyclic $p$-groups}
\label{sectioncyclicpgroups}

In this section, our aim is to compute the Picard group of $\mi{StMod}\big(kC_{p^n}\big)$. The limit spectral sequence of \cref{picspectralsequence} reads
\[E_2^{st} = H^s(C_{p^{n-1}};\pi_t \mf{pic}\,\mi{StMod}(kC_p)\big) \Ra \pi_{t-s} \mf{pic}\,\mi{StMod}\big(kC_{p^n}\big)\,\text{.}\]
Because the groups involved are abelian, all conjugation actions are trivial, hence so is the action of $C_{p^{n-1}}$ on the $\pi_t$. The $E_2$-page has been sketched in \cref{E2limitCpn}. Let's take a look at the differentials, distinguishing between the cases $p = 2$ and odd $p$.

\begin{figure}
\centering
\includegraphics{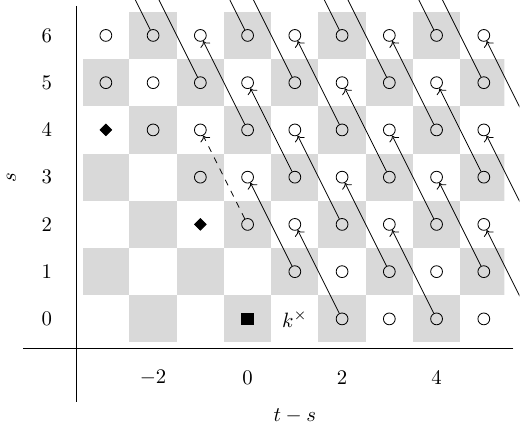}
    \caption{$E_2$-page of the limit spectral sequence for the Picard spectrum of $\mi{StMod}\big(kC_{p^n}\big)$. The circles denote a $k$-summand again. The black square is $0$ if $p = 2$ and $C_2$ if $p$ is odd. The black diamond is the group $k^\times / (k^\times)^{p^{n-1}}$. The known nonzero differentials have been illustrated. The dashed differential is of special interest, as it falls within the range of \cref{unstabledifferential}.}
\label{E2limitCpn}
\end{figure}

Let's start with the case $p = 2$. Differentials in the stable range may be compared with the differentials of the multiplicative spectral sequence
\[ E_2^{st} = H^s(C_{p^{n-1}};\pi_t \Omega \mi{StMod}(kC_{p})\big) \Ra \pi_{t-s}\Omega \mi{StMod}\big(kC_{p^n}\big)\]
using \cref{comparisontool}. But this spectral sequence we have evaluated in \cref{ss:Cpfaithful}: the $E_2$-page and $E_3$-page are sketched in \cref{E2ktCpn} and \cref{E3lhsC2}.

The only relevant differentials which remain are $d_2^{01}$ and $d_2^{22}$. The former is zero, because $k^\times$ has no $2$-torsion. (In addition, we know that the $1$-line should have a surviving $k^\times$ anyhow.) The latter may be understood via \cref{unstabledifferential}. The corresponding differential $d_2^{21}(\Omega)$ of the spectral sequence for $\Omega \mi{StMod}\big(kC_{2^n}\big)$ was the linear map $\langle t_1^{-1} x_1^2\rangle \to \langle t_1^{-2} x_1^4\rangle$ sending $t_1^{-1} x_1^2$ to $t_1^{-2} x_1^4$. Consequently, \cref{unstabledifferential} tells us $d_2^{22}$ in the limit spectral sequence for Picard spectra is the map sending a scalar $\alpha$ in $k$ to $\alpha + \alpha^2$. The kernel of this map is given by the elements $\alpha$ such that $\alpha + \alpha^2 = \alpha(\alpha + 1) = 0$, of which there's only two, namely $0$ and $1$. Therefore the kernel is $C_2$, irrespective of the underlying field $k$. 

The $E_3$-page is now summarised in \cref{E3limitC2n}. It's easily seen that, from the $0$-line upward, no nontrivial differentials can exist, and we deduce the following.

\begin{figure}
\centering
\includegraphics{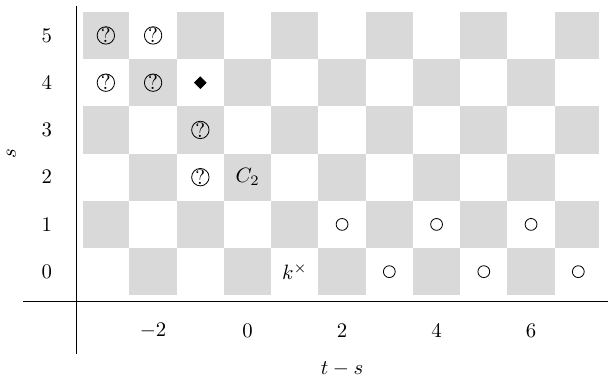}
    \caption{$E_3$-page of the limit spectral sequence for $\mi{StMod}\big(kC_{2^{n}}\big)$. Notice that the $0$-line has only one nonzero group remaining. The black diamond is the group $E_3^{43}$, which is the quotient of $k$ by the subgroup of those $c$ for which the equation $x^2 + x + c$ has a root in $k$. Classes indicated by a question mark have unknown value, as they fall outside the range where we can understand the differentials.}
\label{E3limitC2n}
\end{figure}

\begin{thm}
	\label{outcomeC2n}
	For all fields $k$ of characteristic $2$, and all $n \geq 2$, the Picard group of $\mi{StMod}\big(kC_{2^n}\big)$ is isomorphic to~$C_2$.
\end{thm}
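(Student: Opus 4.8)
The plan is to read off $\Pic\,\StMod(kC_{2^n}) = \pi_0\,\mf{pic}\,\StMod(kC_{2^n})$ from the limit (equivalently homotopy fixed point) spectral sequence of \cref{picspectralsequence}, whose $E_2$-page is displayed in \cref{E2limitCpn}, by importing essentially all of its differentials from the already-understood spectral sequence for $\Omega\StMod(kC_{2^n}) \simeq k^{tC_{2^n}}$ computed in \cref{ss:Cpfaithful}. First I would record the entries on the antidiagonal $t = s$, which is the one contributing to $\pi_0$. Dade's theorem (\cref{reference}) gives $E_2^{0,0} = \Pic(k^{tC_2}) = 0$. At $(1,1)$ we have $E_2^{1,1} = H^1(C_{2^{n-1}};k^\times)$, and since the Frobenius is injective on $k^\times$ in characteristic $2$ this group of $2^{n-1}$-torsion units is trivial. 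For $s \geq 2$ we have $E_2^{s,s} = H^s(C_{2^{n-1}};k) \cong k$. Thus the only entries that could contribute to $\pi_0$ sit at $(s,s)$ for $s \geq 2$, and the task reduces to showing that all of them die except for a single surviving $C_2$.

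The second step is to dispose of the entries at $(s,s)$ for $s \geq 3$ using \cref{comparisontool}. For these the inequality $t = s \geq r+1 = 3$ holds, so the $\mf{pic}$-differential $d_2^{s,s}$ agrees with $d_2^{s,s-1}(\Omega)$. Under the shift $t \mapsto t-1$ these are exactly the differentials along the $\pi_{-1}$-line of the HFPSS for $k^{tC_{2^n}}$, which we have already computed: its $d_2$'s are the isomorphisms $k \to k$ obtained from $d_2(t_1)$ by the Leibniz rule, and they pair off and annihilate every such $k$ (cf. \cref{fig:E2ktCpn} and \cref{E3tateC2}). Consequently $E_3^{s,s} = 0$ for all $s \geq 3$, so the $0$-line of $E_3$ is concentrated at $(2,2)$.

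The heart of the argument is the boundary differential $d_2^{2,2}$, which just fails the hypotheses of \cref{comparisontool} (here $t = s = 2 < 3$ and $t - s = 0$) and must instead be computed with the unstable formula of \cref{unstabledifferential}. The relevant $\Omega$-differential $d_2^{2,1}(\Omega)$ is the \emph{linear} isomorphism sending $t_1^{-1}x_1^2$ to $t_1^{-2}x_1^4$, so the formula yields
\[ d_2^{2,2}(\mf{pic})(\alpha) = d_2^{2,1}(\Omega)(\alpha) + \alpha^2 = \alpha + \alpha^2\,\text{.}\]
Because squaring is additive in characteristic $2$, this is a group homomorphism $k \to k$ whose kernel is $\{\alpha : \alpha(\alpha+1) = 0\} = \{0,1\} \cong C_2$, and crucially this kernel is insensitive to the arithmetic of $k$. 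I would then check that the only incoming differential, $d_2^{0,1}\colon E_2^{0,1} = k^\times \to E_2^{2,2}$, vanishes — this is forced because $k^\times$ has no $2$-torsion (equivalently, the unit is a permanent cycle in the $\Omega$-sequence), so it does not shrink the kernel. Hence $E_3^{2,2} \cong C_2$, as recorded in \cref{E3limitC2n}.

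Finally I would argue that this $C_2$ survives to $E_\infty$. The nonzero class is the reduction of the genuine invertible object $\Omega k \in \Pic\,\StMod(kC_{2^n})$; being the image of an actual element of the Picard group it is a permanent cycle, so no later differential can emanate from it into the (partially unknown) $\pi_{-1}$-line. Since $(2,2)$ is the unique nonzero entry on the $0$-line of $E_3$, there are no extension problems, and we conclude $\Pic\,\StMod(kC_{2^n}) \cong C_2$ for every field $k$ of characteristic $2$ and every $n \geq 2$. The main obstacle is precisely the nonlinear differential $d_2^{2,2}$: everything else is bookkeeping imported from the Hochschild--Serre/$\Omega$ side, whereas the $+\alpha^2$ correction term — and the observation that it turns an isomorphism into an Artin--Schreier map with two-element kernel independent of $k$ — is what pins down the answer.
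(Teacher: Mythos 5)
Up to the computation of the $E_3$-page, your proposal is the paper's proof: the same spectral sequence \cref{picspectralsequence} with $E_2$-entries given by Dade's theorem (\cref{reference}) and the Tate cohomology of $C_2$, the same importation of stable-range differentials from the $\Omega$-spectral sequence of \cref{ss:Cpfaithful} via \cref{comparisontool}, the vanishing of $d_2^{0,1}$ for the same reason the paper gives, and the same unstable computation $d_2^{2,2}(\mf{pic})(\alpha)=\alpha+\alpha^2$ via \cref{unstabledifferential}, with Artin--Schreier kernel $\{0,1\}\cong C_2$ independent of $k$. (One small imprecision: the imported $d_2$'s are not all isomorphisms \emph{out of} the diagonal entries $(s,s)$; they are isomorphisms out of $(s,s)$ for $s$ even, while for $s$ odd the entry is killed by an \emph{incoming} $d_2$ from $(s-2,s-1)$, which lies in the stable range $t-s>0$. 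The conclusion $E_3^{s,s}=0$ for all $s\geq 3$ is correct either way.)

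Where you genuinely depart from the paper is the survival of $E_3^{2,2}$. The paper settles this inside the spectral sequence: the potential targets $E_r^{2+r,\,1+r}$, $r\geq 3$, of differentials leaving $(2,2)$ all have $t=r+1\geq 3$, so their own incoming and outgoing $d_2$'s are still governed by \cref{comparisontool}, and these kill every such entry at $E_3$ (this is what \cref{E3limitC2n} records); hence $(2,2)$ is a permanent cycle with no appeal to any explicit module. You instead argue by detection: the nonzero class of $E_3^{2,2}$ ``is'' the image of $\Omega k$, and detected classes are permanent cycles. As stated this is circular unless you supply the missing input that $\Omega k$ is nontrivial in $\Pic\,\StMod\big(kC_{2^n}\big)$ --- by your own computation the filtration-$\geq 3$ part of the Picard group vanishes, so $\Omega k$ is detected in $E_\infty^{2,2}$ if and only if it is nonzero in the Picard group, which is exactly the kind of statement you are trying to prove. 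That input is classical and easy ($\Omega k$ is the radical of $kC_{2^n}$, an indecomposable nonprojective module of dimension $2^n-1>1$, hence not stably isomorphic to $k$), but it must be stated. Alternatively, the paper's targets-vanish argument is already available to you for free: the same imported $d_2$'s you used on the diagonal show that the $(-1)$-line vanishes at $E_3$ in every position that $(2,2)$ could map to, so that line is not as ``unknown'' as you fear in the relevant range.
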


We now turn to the case where $p$ is odd. Several minor differences arise.
\begin{itemize}[noitemsep, topsep = 0pt]
	\item The Picard group of $\mi{StMod}\big(kC_p\big)$ is $C_2$ rather than $0$ if the prime $p$ is odd.
	\item As observed in \cref{ss:Cpfaithful}, the cup product structure on $H^*(C_{p^{n-1}};k)$ is different.
	\item The squaring operation in \cref{unstabledifferential} dies in the context of odd characteristic, which alters the outcome of the Adams-graded $(0,2)$-position of the spectral sequence.
\end{itemize}
The second point causes the odd-prime analogue of the $E_2$-page of the Hochschild--Serre spectral sequence to have different multiplicative generators, but as we found in \cref{ss:Cpfaithful}, both the $E_2$-page and $E_3$-pages of the Hochschild--Serre spectral sequence look exactly the same as the $p = 2$ case --- see \cref{E2ktCpn} and \cref{E3lhsC2}.

To compute the Picard spectral sequence for $p$ odd in \cref{E2limitCpn}, we can again import differentials in the stable range.  It remains to study the unstable differentials. As before, $d_2^{01}$ is necessarily trivial. $d_2^{00}$ is trivial as well, because $k^\times / (k^\times)^{p^{n-1}}$ has no $2$-torsion, and so the $C_2$ in $E_2^{00}$ survives. The differential $d_2^{22}$ is again governed by \cref{unstabledifferential}. Since we're in odd characteristic, the squaring operation vanishes, and the differential $d_{2}^{22}$ is identified with the corresponding differential $d_{2}^{21}(\Omega)$ of \cref{E2ktCpn}, which is seen to be an isomorphism $k \to k$, and hence $E_{3}^{22}$ is $0$ rather than $C_2$. 

The $E_3$-page is summarised in \cref{E3limitCpn}. As before, there are no more nontrivial differentials that can alter the outcome, and we deduce the following result.

\begin{figure}
\centering
\includegraphics{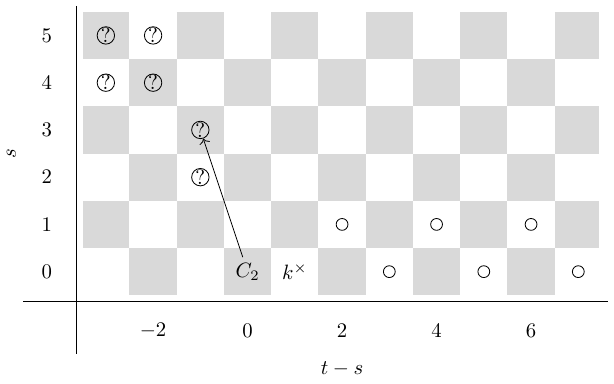}
    \caption{$E_3$-page of the limit spectral sequence for $\mi{StMod}\big(kC_{p^n}\big)$ for $p$ odd. Compare with \cref{E3limitC2n}. The only differential with possibly nontrivial domain and codomain is $d_{3}^{0,0}$, but this differential must be $0$, as $E_3^{3,2}$, arising as a subgroup of $E_2^{2,2} \cong k$, has no $2$-torsion.}
\label{E3limitCpn}
\end{figure}

\begin{thm}
	\label{outcomeCpn}
	For all fields $k$ of odd characteristic $p$ and all $n \geq 2$, the Picard group of $\mi{StMod}\big(kC_{p^n}\big)$ is isomorphic to~$C_2$.
\end{thm}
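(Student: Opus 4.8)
The plan is to run the limit (homotopy fixed point) spectral sequence of \cref{picspectralsequence},
\[E_2^{st} = H^s\big(C_{p^{n-1}};\pi_t \mf{pic}\,\mi{StMod}(kC_p)\big) \Ra \pi_{t-s} \mf{pic}\,\mi{StMod}\big(kC_{p^n}\big)\,\text{,}\]
and to read off $\pi_0$ of the abutment, which is $\Pic\big(\mi{StMod}(kC_{p^n})\big) \cong T(C_{p^n})$. First I would assemble the $E_2$-page. Since $C_{p^n}$ is abelian, the conjugation action of $C_{p^{n-1}}$ on the coefficients is trivial, so the input is the cohomology of the cyclic group $C_{p^{n-1}}$ with coefficients in $\pi_* \mf{pic}(k^{tC_p})$. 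By \cref{reference} the bottom coefficient is $\Pic(k^{tC_p}) \cong C_2$ (this is where odd $p$ first differs from $p=2$); by \cref{homotopygroupsofPic} and \cref{omegaistate} the next coefficient is $\pi_1 = k^\times$ and all higher ones are $k$. The $0$-line is controlled by $E_2^{0,0} \cong C_2$, and its neighbours $E_2^{s,1} = H^s(C_{p^{n-1}};k^\times)$ and $E_2^{s,t} = H^s(C_{p^{n-1}};k)$ are governed by the $p^{n-1}$-periodicity of cyclic cohomology; this is the content of \cref{E2limitCpn}.

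Next I would import every differential in the stable range from the multiplicative spectral sequence for $\Omega\,\mi{StMod}(kC_{p^n}) \simeq k^{tC_{p^n}}$ via \cref{comparisontool}. That auxiliary spectral sequence is precisely the (regraded) Hochschild--Serre spectral sequence of the central extension $C_p \to C_{p^n} \to C_{p^{n-1}}$, which was already evaluated in \cref{ss:Cpfaithful}: its only primary differential is $d_2(t_1) = x_2$, propagated by the Leibniz rule. For odd $p$ the cohomology ring $H^*(C_{p^{n-1}};k) \cong k[x_2]\otimes\Lambda(x_1)$ has different multiplicative generators than the $p=2$ polynomial ring, but as noted in \cref{ss:Cpfaithful} the shapes of the $E_2$- and $E_3$-pages are unchanged (\cref{fig:E2ktCpn}, \cref{E3lhsC2}).

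The crux is the small number of unstable differentials touching the $0$-line, which fall outside the range of \cref{comparisontool}. Here I would argue: $d_2^{0,1}$ must vanish because its source lies on the $(t-s)=1$ line, which has to retain a copy of $k^\times = \pi_0(k^{tC_{p^n}})^\times$ in the abutment; and $d_2^{0,0}$ must vanish because its target $H^2(C_{p^{n-1}};k^\times) \cong k^\times/(k^\times)^{p^{n-1}}$ has exponent dividing the odd number $p^{n-1}$ and hence no $2$-torsion, so the class in $E_2^{0,0} \cong C_2$ survives. The genuinely delicate differential is $d_2^{2,2}$, which I would evaluate with the Mathew--Stojanoska formula \cref{unstabledifferential}, $d_2^{2,2}(\mf{pic})(x) = d_2^{2,1}(\Omega)(x) + x^2$. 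The linear term $d_2^{2,1}(\Omega)$ is the isomorphism $k \to k$ read off from \cref{fig:E2ktCpn}. The main obstacle — and the key point distinguishing this case from \cref{outcomeC2n} — is to verify that the quadratic correction $x^2$ drops out in odd characteristic (the relevant class squares to zero, in contrast to the $p=2$ case where it produced the Artin--Schreier map $\alpha \mapsto \alpha + \alpha^2$ that forced the answer to remain $C_2$). Once this is established, $d_2^{2,2}$ is an isomorphism and $E_3^{2,2} = 0$.

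Finally, with the $E_3$-page of \cref{E3limitCpn} in hand I would confirm that $C_2$ is the only surviving group on the $0$-line and that no later differential can reach or leave it. The sole candidate is $d_3^{0,0}$, whose target $E_3^{3,2}$ is a subquotient of $H^*(C_{p^{n-1}};k)$, an $\bb{F}_p$-vector space and therefore $2$-torsion-free; hence $d_3^{0,0}$ vanishes on the $C_2$ in $E_3^{0,0}$. The spectral sequence thus collapses along the $0$-line to $E_\infty^{0,0} \cong C_2$, giving $\Pic\big(\mi{StMod}(kC_{p^n})\big) \cong C_2$ for all $n \geq 2$. The only real difficulty is the correct bookkeeping of the quadratic term in \cref{unstabledifferential} and the confirmation of its vanishing for odd $p$; everything else reduces to torsion arguments and multiplicative propagation of the imported differentials.
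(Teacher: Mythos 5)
Your proposal is correct and follows essentially the same route as the paper: the same limit spectral sequence with stable differentials imported via \cref{comparisontool}, the same torsion arguments killing $d_2^{0,0}$, $d_2^{0,1}$, and $d_3^{0,0}$, and the same use of \cref{unstabledifferential} at $E_2^{2,2}$, where the quadratic term vanishes in odd characteristic (the relevant class has odd total degree, so it squares to zero), making $d_2^{2,2}$ an isomorphism. The paper records exactly these points as the ``minor differences'' from the $p=2$ case, so there is nothing to add.
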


\subsection{The case of the quaternion group}
\label{sectionQ8}

We will now calculate the Picard spectral sequence 
\[ E_{2}^{st} = H^s\big((C_2)^2; \pi_t \mf{pic} \,\mi{StMod}(kC_2)\big) \Ra \pi_{t - s} \mf{pic} \,\mi{StMod}(kQ_8)\,\text{.}\]
The $E_2$-page has been illustrated in \cref{E2limitQ8}. The terms for $t \geq 2$ are cohomology groups, which we have also encountered in \cref{section:galoisquaternion}. As for $t = 1$, we notice that the crucial term $E_2^{11} = H^1\big((C_2)^2;k^\times\big)$ is zero; indeed, there are no nontrivial maps $(C_2)^2 \to k^\times$ because $k^\times$ never has any $2$-torsion.

\begin{figure}
\centering
\includegraphics{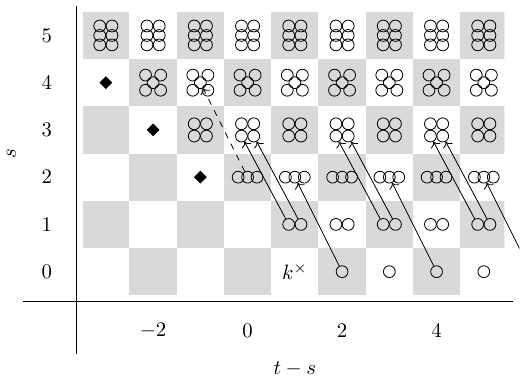}
    \caption{$E_2$-page of the limit spectral sequence for computing the Picard group of $\mi{StMod}\big(kQ_8\big)$. The nontrivial differential has been illustrated. The group indicated by the black diamond is trivial if $k$ is a perfect field. Known nonzero diagonals have been illustrated only for small $s$ to prevent cluttering; the dashed diagonal is governed by \cref{unstabledifferential}.}
\label{E2limitQ8}
\end{figure}

Using \cref{comparisontool}, the differentials in the stable range may be directly imported from the HFPSS computing $\pi_*(k^{tQ_8})$, which we have investigated in \cref{section:galoisquaternion}. The $E_2$-page, illustrated in \cref{tq-E2hfpss}, was given by
\[ E_2^{st} \cong k[x_1,y_1] \otimes k[t_1]\,\text{,}\]
with $d_2(t_1) = x_1^2 + x_1 y_1 + y_1^2$ and $d_3(t_1^2) = x_1^2 y_1 + x_1 y_1^2$.

There is an unstable differential $d_2^{22}(\pic)$, which by \cref{unstabledifferential} we may compare to $d_{2}^{21}(\Omega)$. The differential $d_2^{21}(\Omega)$ of the Hochschild--Serre spectral sequence is a $k$-linear map $k^3 \to k^5$ defined by
\[d_2^{21}(\Omega) \colon \begin{cases}
	t_1^{-1} x_1^2 &\mapsto t_1^{-2}(x_1^2 + x_1y_1 + y_1^2)x_1^2 \\
	t_1^{-1} x_1y_1 &\mapsto t_1^{-2}(x_1^2 + x_1y_1 + y_1^2)x_1y_1 \\
	t_1^{-1} y_1^2 &\mapsto t_1^{-2}(x_1^2 + x_1y_1 + y_1^2)y_1^2 \end{cases}\]
The resulting differential $d_2^{22}(\pic)$ may now be computed by hand. It has been described diagramatically in \cref{d2limitQ8}. We easily see that there's only one possible nonzero element in the kernel, namely $x_1^2 + x_1y_1 + y_1^2$, and hence the kernel is $C_2$ regardless of the field $k$. 

\begin{table}
\setlength{\tabcolsep}{5pt}
\centering
\begin{tabular}{cc|ccccc}
 &  & $t_1^{-2}x_1^4$ & $t_1^{-2}x_1^3 y_1$ & $t_1^{-2}x_1^2y_1^2$ & $t_1^{-2}x_1y_1^3$ & $t_1^{-2}y_1^4$ \\ \hline
$\lambda t_1^{-1}x_1^2$ & $\mapsto$ & $\lambda + \lambda^2$ & $\lambda$ & $\lambda$ & &  \\
$\mu t_1^{-1}x_1y_1$ & $\mapsto$ &  & $\mu$ & $\mu + \mu^2$ & $\mu$ &  \\
$\nu t_1^{-1}y_1^2$ & $\mapsto$ &  &  & $\nu$ & $\nu$ & $\nu + \nu^2$ 
\end{tabular}
\caption{Behaviour of $d_2^{22}(\pic)$ in the limit spectral sequence for $Q_8$. Here $\lambda$ denotes a scalar in $k$. Notice that the differential is not $k$-linear, although it is $\bb{F}_2$-linear.}
\label{d2limitQ8}
\end{table}

\begin{figure}
\centering
\includegraphics{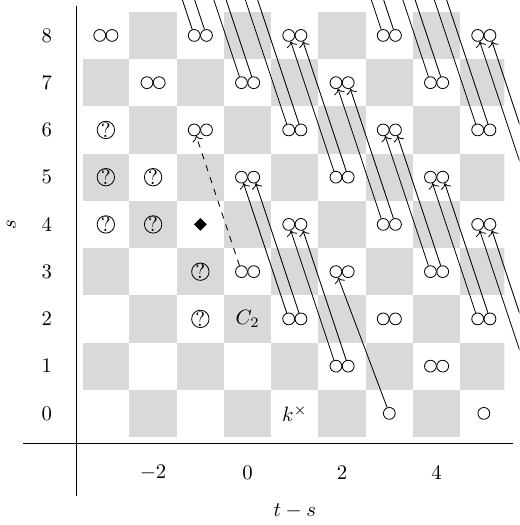}
    \caption{$E_3$-page of the limit spectral sequence for computing the Picard group of $\mi{StMod}\big(kQ_8\big)$. The nontrivial differentials have been illustrated. The group illustrated by the black diamond is the cokernel of the nonlinear map described by \cref{d2limitQ8}.}
\label{E3limitQ8}
\end{figure}

On the $E_3$-page, which has been illustrated in \cref{E3limitQ8}, a similar situation arises: the stable differentials are imported from \cref{E3tateQ8}, but there's a possibly nontrivial unstable differential $d_3^{33}(\pic)$. The corresponding differential $d_3^{32}(\Omega)$ from the Hochschild--Serre spectral sequence is the $k$-linear map 
\[d_3^{32}(\Omega) \colon \begin{cases}
	t_1^{-2}[x_1y_1^2] &\mapsto t_1^{-4}([x_1^3y_1^3] + [x_1^2y_1^4]) \\
	t_1^{-2}[x_1^2y_1] &\mapsto t_1^{-4}([x_1^4y_1^2] + [x_1^3y_1^3]) \end{cases}\]
Using this, we readily compute to find the following result.

\begin{prop}
The behavior of the $d_3^{33}(\pic)$ differential is described by
\[ d_3^{33}(\pic) \colon \begin{cases}
    \lambda t_1^{-2} [x_1 y_1^2] &\mapsto \lambda t_1^{-4} [x_1^2 y_1^4] + \lambda^2 t_1^{-4} [x_1^4 y_1^2] \\
    \mu t_1^{-2} [x_1^2 y_1] &\mapsto \mu^2 t_1^{-4} [x_1^2 y_1^4] + \mu t_1^{-4} [x_1^4 y_1^2]\end{cases}\]
\label{prop:unstableq8diff3}
\end{prop}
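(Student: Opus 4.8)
The plan is to invoke \cref{unstabledifferential}. The differential $d_3^{33}(\pic)$ sits in bidegree $(s,t)=(3,3)$ with $r=3$, hence is exactly of the form $d_r^{rr}(\pic)$; since the indexing diagram $B(C_2)^2$ has a single object, \cref{unstabledifferential} applies verbatim and yields
\[ d_3^{33}(\pic)(x) = d_3^{3,2}(\Omega)(x) + x^2, \]
where the square is taken with respect to the multiplicative structure of the spectral sequence for $\Omega\,\mi{StMod}(kQ_8)\simeq k^{tQ_8}$. Both terms land in the target $E_3^{6,5}(\pic)$, which by the identification $\pi_t\,\pic\,\mi{StMod}(kC_2)\cong\pi_{t-1}(k^{tC_2})$ of \cref{homotopygroupsofPic} is the same group as $E_3^{6,4}(\Omega)$. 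The entire computation therefore takes place inside the Hochschild--Serre (equivalently $\Omega$) spectral sequence already analysed in \cref{section:galoisquaternion}.

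First I would identify the two groups explicitly. The domain $E_3^{33}(\pic)\cong E_3^{3,2}(\Omega)$ is the degree-$3$ part of $k[x_1,y_1]\cdot t_1^{-2}$ modulo the image of $d_2$; a short calculation shows it is two-dimensional with basis $t_1^{-2}[x_1 y_1^2]$ and $t_1^{-2}[x_1^2 y_1]$. The target $E_3^{6,4}(\Omega)$ is the degree-$6$ part of $k[x_1,y_1]\cdot t_1^{-4}$ modulo $\I d_2 = Q\cdot k[x_1,y_1]_{\deg 4}$, where $Q = x_1^2+x_1y_1+y_1^2 = d_2(t_1)$; it too is two-dimensional, and the one relation I will actually need is $[x_1^3y_1^3]=[x_1^4y_1^2]+[x_1^2y_1^4]$, arising from $Q\cdot x_1^2 y_1^2 = 0$.

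Next I would assemble the two contributions. The linear part $d_3^{3,2}(\Omega)$ is precisely the map displayed immediately above the statement, which one checks is forced by $d_3(t_1^2)=x_1^2y_1+x_1y_1^2$ together with the Leibniz rule. The quadratic part is governed by the Frobenius: in characteristic $2$ squaring is additive and carries $[m]\,t_1^{-2}$ to $[m^2]\,t_1^{-4}$, so that $(\lambda\,t_1^{-2}[x_1y_1^2])^2=\lambda^2\,t_1^{-4}[x_1^2y_1^4]$, and similarly for the other generator; there is no cross term, its coefficient $2\lambda\mu$ being zero. Adding the linear and Frobenius contributions and reducing modulo the relation above then yields the formula in the statement.

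The step I expect to be most delicate is the bookkeeping in the target. The assignment $x\mapsto d_3^{3,2}(\Omega)(x)+x^2$ is additive but genuinely not $k$-linear, so the scalars must be tracked separately as $\lambda$ versus $\lambda^2$; and because the three monomials $x_1^3y_1^3$, $x_1^4y_1^2$, $x_1^2y_1^4$ are dependent on $E_3^{6,4}$, the linear and Frobenius contributions become intertwined through the relation $[x_1^3y_1^3]=[x_1^4y_1^2]+[x_1^2y_1^4]$ before the final answer crystallises. This Frobenius-twisted coefficient is exactly the nonlinear phenomenon responsible for the field-dependence of $T(Q_8)$ highlighted in \cref{rmk:q8}.
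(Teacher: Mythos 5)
Your proposal is correct and takes essentially the same route as the paper: the paper's own (very terse) proof likewise applies \cref{unstabledifferential} with $r=3$ to the displayed $k$-linear differential $d_3^{32}(\Omega)$, adds the Frobenius square, and reduces modulo the $d_2$-relation $[x_1^3y_1^3]=[x_1^4y_1^2]+[x_1^2y_1^4]$ in the target. Your write-up simply makes explicit the bookkeeping (the two-dimensional bases of $E_3^{3,2}(\Omega)$ and $E_3^{6,4}(\Omega)$, the vanishing cross terms in characteristic $2$) that the paper compresses into ``we readily compute,'' and in any case the resulting kernel condition $\lambda=\mu^2$, $\mu=\lambda^2$ --- the only thing used downstream --- is identical.
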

Elements in the kernel of this differential correspond to pairs $(\lambda,\mu)$ such that $\lambda + \mu^2 = 0$ and $\lambda^2 + \mu = 0$. Since $k$ is of characteristic~$2$, this corresponds to pairs $(\lambda,\lambda^2)$ such that $\lambda + \lambda^4 = 0$. Clearly, there are trivial solutions $\lambda = 0$ and $\lambda = 1$, but if $k$ contains a primitive cube root of unity $\omega$, then we may also take $\lambda = \omega$ and $\lambda = \omega^2$. We thus find that
\[ \K d_{3}^{33} \cong \begin{cases}
	C_2 \oplus C_2 \qquad &\text{if $k$ has a third root of unity;} \\
	C_2 \qquad &\text{otherwise.}\end{cases}\]

\begin{figure}
\centering
\includegraphics{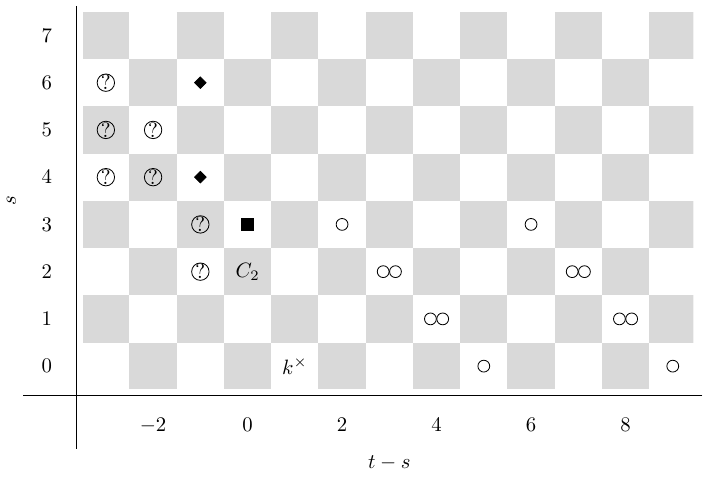}
    \caption{$E_4$-page of the limit spectral sequence for computing the Picard group of $\mi{StMod}\big(kQ_8\big)$. There are no remaining differentials, and the spectral sequence collapses. The black square is $E_3^{33}$, and it depends on the structure of the field $k$. It is either $C_2$ or $(C_2)^2$. The groups illustrated by the black diamonds are the cokernels of the nonlinear maps $d_2^{22}$ and~$d_3^{33}$.}
\label{E4limitQ8}
\end{figure}

We're now ready to write out the $E_4$-page of the limit spectral sequence. A portion of it has been illustrated in \cref{E4limitQ8}. The spectral sequence collapses --- at least in the relevant range $t - s \geq 0$ --- and we find that the line $t - s = 0$ depends on the structure of the field. If $k$ has a third root of unity, then there's a copy of $C_2$ and a copy of $C_2 \oplus C_2$ on the $0$-line, while if $k$ does not have a third root of unity, there's two surviving copies of $C_2$.

In both cases, there's room for nontrivial extension problems. Nonetheless it's easy to overcome these problems: The $4$-fold periodicity of the homotopy groups of $k^{tQ_8}$ implies that the unit is an element of order $4$ in the Picard group. The only groups with the indicated extensions and an element of order $4$ are $C_4$ and $C_4 \oplus C_2$, hence we deduce the following result.

\begin{thm}
\label{resultq8}
	Let $k$ a field of characteristic $2$. Then 
	\[ \Pic\big(\StMod(kQ_{8})\big) \cong \begin{cases}
	    C_4 \oplus C_2 \qquad &\text{if $k$ has a primitive cube root of unity;} \\
	    C_4 \qquad &\text{otherwise.}\end{cases}\]
\end{thm}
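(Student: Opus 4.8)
The plan is to read the Picard group off the diagonal of the limit spectral sequence
\[ E_{2}^{st} = H^s\big((C_2)^2; \pi_t \mf{pic}\,\StMod(kC_2)\big) \Ra \pi_{t - s}\,\mf{pic}\,\StMod(kQ_8)\,\text{,}\]
whose abutment on the line $t - s = 0$ is exactly $\pi_0\,\mf{pic}\,\StMod(kQ_8) \cong \Pic(\StMod(kQ_8))$. The argument then splits into two halves: first assemble the $E_4$-page to pin down the associated graded of $\Pic$, and then resolve the resulting extension problem.

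For the first half I would import the stable differentials from the HFPSS for $\pi_*(k^{tQ_8})$ via \cref{comparisontool}, and compute the two unstable differentials $d_2^{22}(\pic)$ and $d_3^{33}(\pic)$ via the Mathew--Stojanoska formula \cref{unstabledifferential}, exactly as recorded in \cref{d2limitQ8} and \cref{prop:unstableq8diff3}. On the $0$-line this leaves two surviving groups, $E_\infty^{22} \cong \K d_2^{22} \cong C_2$ and $E_\infty^{33} \cong \K d_3^{33}$, the latter being $(C_2)^2$ when $k$ contains a primitive cube root of unity and $C_2$ otherwise. Since no further differential can touch the $0$-line in the range $t - s \geq 0$, the spectral sequence collapses at $E_4$ there (\cref{E4limitQ8}), and the associated graded of $\Pic$ is $E_\infty^{22} \oplus E_\infty^{33}$, of order $4$ or $8$ according to the field.

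For the second half I would resolve the extension. The induced filtration on $\Pic$ has nonzero jumps only at $s = 2$ and $s = 3$, with $E_\infty^{33}$ (of exponent $2$) sitting as a subgroup and $E_\infty^{22} \cong C_2$ as the quotient, giving a short exact sequence $0 \to E_\infty^{33} \to \Pic \to C_2 \to 0$. Because the cohomology of $Q_8$ is periodic of period exactly $4$, the homotopy groups of $k^{tQ_8}$ are $4$-periodic, so $\Sigma^4 \mb{1} \simeq \mb{1}$ while $\Sigma^2 \mb{1} \not\simeq \mb{1}$; hence the suspension $\Sigma\mb{1}$ defines an element of order exactly $4$ in $\Pic$. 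This element rules out the split possibilities $(C_2)^2$ and $(C_2)^3$, and the exponent-$2$ subgroup $E_\infty^{33}$ excludes $C_8$ (whose unique index-$2$ subgroup is $C_4$, not of exponent $2$). Thus in the cube-root case the only abelian group of order $8$ consistent with these constraints is $C_4 \oplus C_2$, and otherwise the only order-$4$ option is $C_4$, which is the claim.

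The main obstacle is precisely this extension step: everything else is bookkeeping on the already-computed $E_4$-page. The two things that must be nailed down are that the suspension has order \emph{exactly} $4$ (not merely dividing $4$), which rests on the exact periodicity of the group cohomology of $Q_8$, and that the filtration places $E_\infty^{33}$ as a subgroup of exponent $2$, which is what genuinely excludes $C_8$ in the cube-root case. Once these two structural facts are secured, the classification of abelian groups of orders $4$ and $8$ forces the stated dichotomy.
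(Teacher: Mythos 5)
Your proposal is correct and takes essentially the same approach as the paper: the same Picard spectral sequence, the same import of stable differentials via \cref{comparisontool}, the same unstable differentials $d_2^{22}(\pic)$ and $d_3^{33}(\pic)$ computed from \cref{unstabledifferential}, and the same resolution of the extension problem via the $4$-fold periodicity of $\pi_*(k^{tQ_8})$. Your write-up of the extension step (that the suspension of the unit has order exactly $4$ since $\Sigma^2\mb{1} \not\simeq \mb{1}$, and that $E_\infty^{33}$ sits as an exponent-$2$ subgroup, excluding $C_8$) merely makes explicit what the paper states tersely, so there is no substantive difference.
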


\begin{rmk}
    The exotic generator of the Picard group of $\StMod(kQ_8)$ has a known explicit description as a $G$-representation. It is captured by the associations
    \[ i \mapsto \begin{pmatrix}
        1 & 0 & 0 \\ 1 & 1 & 0 \\ 0 & 1 & 1 \end{pmatrix} \qquad \text{and} \qquad j \mapsto \begin{pmatrix} 1 & 0 & 0 \\ \omega & 1 & 0 \\ 0 & \omega^2 & 1 \end{pmatrix}\text{,}\]
    where $\omega$ denotes a principal cube root of unity. It would be interesting to have a homotopical construction of this object.
\end{rmk}

\begin{rmk}
\label{rmk:q8}
    This method of computing the group of endotrivial modules differs dramatically from the work of Carlson--Th\'evenaz \cite{Carlson2000-ux}, who used representation-theoretic techniques (namely, the theory of support varieties). In the case of the quaternion group $Q_8$, they explicitly construct the endotrivial modules above, and prove that no other endotrivial modules exist. 
    
    However, our method allows us to compute the group of endotrivial modules \textit{a priori}.  Indeed, our method is non-constructive, and moreover it provides a conceptual explanation for why the cube root of unity $\omega$ is significant in the case $G \cong Q_8$: it arises due to the existence of a nonlinear $d_3$ differential in the Picard spectral sequence (\cref{prop:unstableq8diff3}), whose kernel depends on the existence of a cube root of unity.
\end{rmk}

\subsection{The case of generalized quaternion groups}

We shall now calculate the Picard spectral sequence
\[ E_{2}^{st} = H^s\big(D_{2^{n-1}}; \pi_t \mf{pic} \,\mi{StMod}(kC_2)\big) \Ra \pi_{t - s} \mf{pic} \,\mi{StMod}(kQ_{2^{n}})\,\text{.}\]
The $E_2$-page, illustrated on \cref{E2limitQ2n}, looks effectively the same as that of $Q_8$, and indeed is computed in the same way. The differentials in the stable range are imported from the associated HFPSS
\[\begin{split}
E_2^{st} &\cong H^s(D_{2^{n-1}};k) \otimes \pi_t(k^{tC_2}) \\&\cong k[x_1,u_1,z_2] / (u_1 x_1 + x_1^2) \otimes k[t^{\pm 1}] \Ra \pi_*(k^{tQ_{2^n}})
\end{split}\]
which we computed in \cref{generalizedquaternionfaithful}. We found that $d_2(t_1) = u_1^2 + z_2$ and $d_3(t_1^2) = u_1z_2$, and that the pages looked identical to the analogous spectral sequences for $Q_8$, which were illustrated in \cref{tq-E2hfpss} and \cref{E3tateQ8}.

\begin{figure}
\centering
\includegraphics{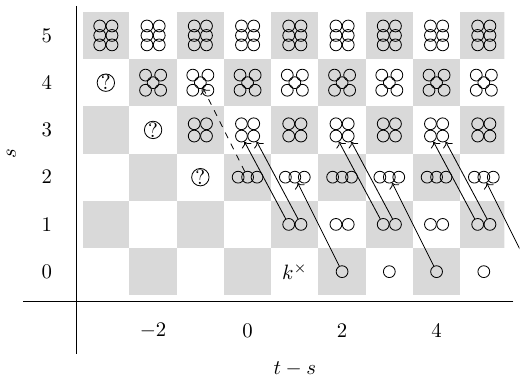}
    \caption{$E_2$-page of the limit spectral sequence for computing the Picard group of $\mi{StMod}\big(kQ_{2^n}\big)$. The nontrivial differential has been illustrated. Terms indicated by a question mark are the higher cohomology of $D_{2^{n-1}}$ with coefficients in $k^{\times}$, and are likely well-behaved when $k$ is perfect. The dashed diagonal is governed by \cref{unstabledifferential}.}
\label{E2limitQ2n}
\end{figure}




The crucial unstable differential is again $d_2^{22}(\pic)$, which we compute through \cref{unstabledifferential}. The differential $d_2^{21}(\Omega)$ is the $k$-linear map $k^3 \to k^5$ defined by
\[d_2^{21}(\Omega) \colon \begin{cases}
	t^{-1}u^2 &\mapsto t^{-2}(u^2+z)u^2 \\
	t^{-1}z &\mapsto t^{-2}(u^2+z)z \\
	t^{-1}ux_1 &\mapsto t^{-2}(u^2+z)ux_1\end{cases}\]
We use this to compute $d_2^{22}$ by hand; the result has been indicated in \cref{d2limitQ2n}. The only nonzero element in the kernel is $t_1^{-1}(u_1^2 + z_2)$, so the kernel is $C_2$ regardless of the field $k$. This brings us to the $E_3$-page, illustrated in \cref{E3limitQ2n}. 

\begin{table}
\setlength{\tabcolsep}{5pt}
\centering
\begin{tabular}{cc|ccccc}
 &  & $t_1^{-2}u^4$ & $t_1^{-2}u_1^2z_2$ & $t_1^{-2}z_2^2$ & $t_1^{-2}u_1^3x_1$ & $t_1^{-2}u_1z_2x_1$ \\ \hline
$\lambda t_1^{-1}u_1^2$ & $\mapsto$ & $\lambda + \lambda^2$ & $\lambda$ & & &  \\
$\lambda t_1^{-1}z_2$ & $\mapsto$ &  & $\lambda$ & $\lambda + \lambda^2$ & &  \\
$\lambda t_1^{-1}u_1x_1$ & $\mapsto$ &  &  &  & $\lambda + \lambda^2$  & $\lambda$
\end{tabular}
\caption{Behaviour of $d_2^{22}(\pic)$ in the limit spectral sequence for $Q_{2^n}$. Here $\lambda$ denotes a scalar in $k$. Compare with \cref{d2limitQ8}.}
\label{d2limitQ2n}
\end{table}


\begin{figure}
\centering
\includegraphics{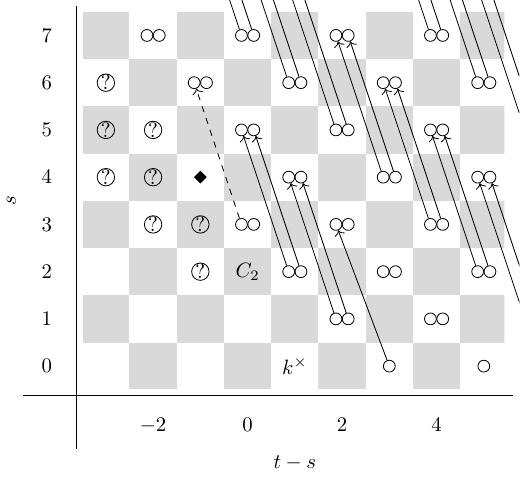}
    \caption{$E_3$-page of the limit spectral sequence for computing the Picard group of $\mi{StMod}\big(kQ_{2^n}\big)$. The nontrivial differentials have been illustrated. The group illustrated by the black diamond is the cokernel of the nonlinear map described by \cref{d2limitQ2n}.}
\label{E3limitQ2n}
\end{figure}

On the $E_3$-page there's again an unstable differential, $d_3^{33}(\pic)$. In the analaysis of the HFPSS, the differential $d_3^{32}(\Omega)$ was determined to be the map $k^2 \to k^2$ defined by
\[d_3^{32}(\Omega) \colon \begin{cases}
	t_1^{-2}[u_1z_2] &\mapsto t_1^{-4}[u_1^2z_2^2] \\
	t_1^{-2}[z_2x_1] &\mapsto t_1^{-4}[u_1z_2^2x_1] \end{cases}\]
Using \cref{unstabledifferential} again, we compute $d_3^{33}(\pic)$ by hand again to find that 
\[d_3^{33}(\pic) \colon \begin{cases}
    \lambda t_1^{-2} [u_1 z_2] &\mapsto \lambda t_1^{-4} [u_1^2 z_2^2] + \lambda t_1^{-4} [u_1 z_2^2 x_1] \\
    \mu t_1^{-2} [z_2 x_1] &\mapsto \mu^2 t_1^{-4} [u_1^2 z_2^2] + \mu^2 t_1^{-4} [u_1 z_2^2 x_1] \end{cases}\]
We see that for an element to be in the kernel of $d_3^{33}$, we need $\lambda + \lambda^2$ and $\mu + \mu^2$ to be $0$. Therefore, both $\lambda$ and $\mu$ can be either $0$ or $1$, so that the kernel is isomorphic to $(C_2)^2$, irrespective of the field $k$. 




\begin{figure}
\centering
\includegraphics{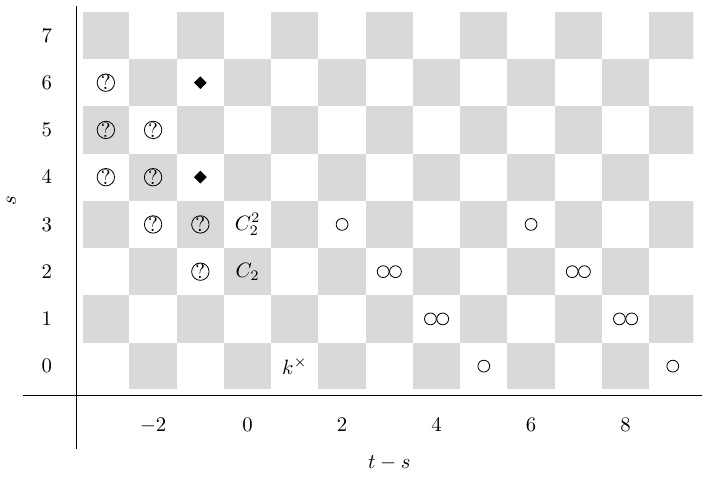}
    \caption{$E_4$-page of the limit spectral sequence for computing the Picard group of $\mi{StMod}\big(kQ_{2^n}\big)$. There are no remaining differentials, and the spectral sequence collapses. The groups illustrated by the black diamonds are the cokernels of the nonlinear maps $d_2^{22}$ and~$d_3^{33}$.}
\label{E4limitQ2n}
\end{figure}

The relevant portion of $E_4$-page is now in \cref{E4limitQ2n}. There are no further differentials which may influence the line $t - s = 0$. As in the case of $Q_8$, there's room for a nontrivial extension problem, which is resolved by observing the $4$-fold periodicity of the Tate cohomology of $Q_{2^n}$. We thus conclude the following result.

\begin{thm}
	\label{resultQ2n}
	The Picard group of $\mi{StMod}\big(kQ_{2^n}\big)$, where $n \geq 4$, is given by $C_2 \oplus C_4$ for all fields $k$ of characteristic~$2$.
\end{thm}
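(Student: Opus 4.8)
The plan is to run the Picard (limit) spectral sequence of \cref{picspectralsequence} for $G = Q_{2^n}$ with $H = Z(Q_{2^n}) \cong C_2$, following the same template as the $Q_8$ case but with the cohomology of $D_{2^{n-1}}$ in place of that of $(C_2)^2$. By \cref{homotopygroupsofPic} the $E_2$-page is $E_2^{st} = H^s\big(D_{2^{n-1}}; \pi_t \pic\,\StMod(kC_2)\big)$; the lines $t \geq 2$ are the cohomology groups $H^s(D_{2^{n-1}}; k)$ already assembled in \cref{generalizedquaternionfaithful}, the $1$-line vanishes because $H^1(D_{2^{n-1}}; k^\times) = \Hom(D_{2^{n-1}}, k^\times) = 0$ (as $k^\times$ has no $2$-torsion and $D_{2^{n-1}}$ is a $2$-group), and the $0$-line is $\pi_0\pic(k^{tC_2}) = \Pic\big(\StMod(kC_2)\big) = 0$ by Dade (\cref{reference}). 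I would then import all differentials in the stable range from the multiplicative spectral sequence for $\Omega\StMod(kQ_{2^n}) \simeq k^{tQ_{2^n}}$ via \cref{comparisontool}; this spectral sequence was computed in \cref{generalizedquaternionfaithful}, with $d_2(t_1) = u_1^2 + z_2$, $d_3(t_1^2) = u_1 z_2$, and everything else generated multiplicatively.

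The only differentials not covered by \cref{comparisontool} are the two unstable ones on the diagonal, $d_2^{22}(\pic)$ and $d_3^{33}(\pic)$, which I would compute using the Mathew--Stojanoska formula \cref{unstabledifferential}, namely $d_r^{rr}(\pic)(x) = d_r^{r,r-1}(\Omega)(x) + x^2$. Substituting the explicit $\Omega$-differentials $d_2^{21}(\Omega)$ and $d_3^{32}(\Omega)$ and adding the squaring correction produces $\bb{F}_2$-linear (but not $k$-linear) maps; the behaviour of $d_2^{22}(\pic)$ is exactly the one tabulated in \cref{d2limitQ2n}, and a direct inspection gives $\K d_2^{22} \cong C_2$ and $\K d_3^{33} \cong (C_2)^2$. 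I expect the genuinely interesting point here — and the place where the argument diverges from $Q_8$ — is that the cube-root-of-unity condition that governed $\K d_3^{33}$ for $Q_8$ disappears: the two diagonal generators coming from $[u_1 z_2]$ and $[z_2 x_1]$ decouple, so that being in the kernel reduces to $\lambda + \lambda^2 = 0$ and $\mu + \mu^2 = 0$, giving $\K d_3^{33} \cong (C_2)^2$ for \emph{every} field $k$ of characteristic $2$.

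It then remains to read off the $0$-line of the $E_4 = E_\infty$ page (\cref{E4limitQ2n}): the only surviving contributions at total degree $t - s = 0$ are $E_\infty^{22} \cong C_2$ and $E_\infty^{33} \cong (C_2)^2$, so $\Pic\big(\StMod(kQ_{2^n})\big)$ is an abelian group of order $8$ fitting in an extension $0 \to (C_2)^2 \to \Pic \to C_2 \to 0$, the subgroup being the deepest nonzero filtration layer $E_\infty^{33}$. The final step is to resolve this extension, which is where I expect the main (nonroutine) work to lie, together with the verification of the unstable differentials. Two constraints pin it down exactly: since the subgroup $E_\infty^{33} \cong (C_2)^2$ is non-cyclic, $\Pic$ cannot be $C_8$; and since the $4$-fold periodicity of $\wh{H}^{*}(Q_{2^n}; k)$ (\cref{tatereference}) forces the suspension of the unit to have order precisely $4$, $\Pic$ cannot be elementary abelian $(C_2)^3$. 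The only abelian group of order $8$ consistent with both is $C_4 \oplus C_2$, which is the claim; note that, in contrast to $Q_8$, the field-independence of $\K d_3^{33}$ makes the answer uniform in $k$.
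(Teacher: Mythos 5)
Your proposal follows the paper's proof essentially step for step: the same Picard/limit spectral sequence over $D_{2^{n-1}}$ with vanishing $0$- and $1$-lines, the same stable differentials imported from the HFPSS of \cref{generalizedquaternionfaithful} via \cref{comparisontool}, the same unstable differentials $d_2^{22}(\pic)$ and $d_3^{33}(\pic)$ computed through \cref{unstabledifferential} with field-independent kernels $C_2$ and $(C_2)^2$ (including the key observation that, unlike for $Q_8$, the two diagonal generators decouple so no root-of-unity condition appears), and the same resolution of the extension problem via the order-$4$ suspension class forced by the $4$-fold periodicity of $\wh{H}^*(Q_{2^n};k)$. Your only addition is spelling out slightly more explicitly than the paper does why the extension $0 \to (C_2)^2 \to \Pic \to C_2 \to 0$ together with an element of order $4$ pins down $C_4 \oplus C_2$, which is exactly the argument the paper invokes implicitly by analogy with its $Q_8$ case.
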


\begin{rmk}
\label{rmk:q2n}    
    To compute the group of endotrivial modules of the generalized quaternion groups, Carlson--Th{\'e}venaz rely on the computation for $Q_8$. More precisely, they prove the following theorem:
    \begin{thm*}[{\cite[Thm. 2.7]{Carlson2000-ux}}]
        Let $G$ be a non-cyclic $p$-group, and let $\mathcal{E}$ denote the family of subgroups $H$ such that $H$ is an extraspecial $2$-group that is not isomorphic to $D_8$, or an almost extraspecial $2$-group, or an elementary abelian group of rank 2. Then the restriction map
        \[\textnormal{Res} \colon \Pic\big(\mathsf{StMod}(kG)\big) \to \prod_{H \in \mathcal{E}} \Pic\big(\mathsf{StMod}(kH)\big)\]
         is injective.
    \end{thm*}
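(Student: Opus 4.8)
The plan is to prove injectivity by showing that the kernel of $\Res$ is trivial, after first decomposing $\Pic\big(\StMod(kG)\big) = T(G)$ into its torsion-free and torsion parts. Support-variety theory furnishes a short exact sequence $0 \to K(G) \to T(G) \to TF(G) \to 0$, in which $TF(G)$ is free abelian and $K(G)$ is precisely the subgroup of endotrivial modules whose restriction to every elementary abelian subgroup is trivial in the relevant stable module category. Since every elementary abelian subgroup of rank $2$ already lies in $\mathcal{E}$, it suffices to establish two things: (i) the composite $TF(G) \to \prod_E T(E)$, taken over the rank-$2$ elementary abelians, is injective; and (ii) the restriction of $K(G)$ to the family of extraspecial and almost extraspecial $2$-subgroups in $\mathcal{E}$ is injective.

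For step (i) I would use that for any elementary abelian $p$-group $E$ of rank at least $2$ one has $T(E) \cong \mathbb{Z}$, generated by the class of $\Omega k$ (Dade's theorem, \cref{reference}), and that restriction preserves the unit and commutes with $\Omega$, so that $T(E') \to T(E)$ is an isomorphism whenever $E \leq E'$ are both of rank $\geq 2$. The torsion-free class of a module is recorded by the integers it determines on the connected components of the poset of elementary abelian subgroups of rank $\geq 2$, and every such component contains a rank-$2$ subgroup; hence restriction to rank-$2$ subgroups already detects all of this data. This part is uniform in $p$, and for $p$ odd one moreover has $K(G) = 0$, so the theorem is complete in that case.

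The crux is step (ii), which is genuinely a statement about $2$-groups. Here I would argue by induction on $|G|$, reducing the detection of $K(G)$ to proper subgroups by a restriction-and-transfer argument, so that the base cases are exactly those $2$-groups admitting no further reduction. These turn out to be the extraspecial $2$-groups, the almost extraspecial $2$-groups, and the rank-$2$ elementary abelians; the dihedral group $D_8$ is excluded because its nontrivial endotrivial modules are already seen on its rank-$2$ elementary abelian subgroups, so it contributes nothing new. The essential computational input is that $K$ of each base-case group is understood, for instance $K(Q_8) \cong C_2$, detected by the exotic generator produced in \cref{resultq8}, and analogously for the larger extraspecial and almost extraspecial groups.

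The hard part will be step (ii): showing that the inductive reduction terminates at precisely this list, and that no nontrivial element of $K(G)$ restricts trivially to every member of it. This needs the structure theory of $2$-groups (the classification of the possible ``critical'' subgroups) together with control of how $K$ transforms under restriction, which is exactly where Carlson--Th\'evenaz's support-variety machinery does the real work. A purely homotopical substitute would instead have to read off the same detection statement from the higher-filtration terms $E_\infty^{s,s}$, $s \geq 1$, of the descent spectral sequence of \cref{picspectralsequence} --- these subquotients assemble $K(G)$ --- and I expect that identifying those terms with the contributions seen by restriction to the extraspecial and almost extraspecial subgroups would be the main obstacle.
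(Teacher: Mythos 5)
First, a point of order: the paper never proves this theorem. It is quoted, inside \cref{rmk:q2n}, from \cite{Carlson2000-ux}, precisely to contrast the classical route to $T(Q_{2^n})$ (which needs this detection theorem together with the $Q_8$ computation) with the paper's homotopical route (which avoids it entirely). So there is no proof in the paper for your argument to be compared against; what you are really reconstructing is Carlson--Th\'evenaz's own proof, and your outline has to be judged as a standalone argument.

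Judged that way, it has a genuine gap, which you partly acknowledge yourself: all of the mathematical content sits in your step (ii), and there you assert rather than prove the three essential claims --- that a restriction-and-transfer induction reduces detection of $K(G)$ to proper subgroups, that the groups at which this reduction terminates are precisely the extraspecial and almost extraspecial $2$-groups (with $D_8$ removable), and that no nonzero element of $K(G)$ restricts trivially to all of them. Deferring these to ``Carlson--Th\'evenaz's support-variety machinery'' is deferring the whole theorem. There are also two concrete errors. With $K(G)$ defined as you define it (trivial restriction to every elementary abelian subgroup), your sample computation is wrong: $Q_8$ has a unique elementary abelian subgroup, its centre $C_2$, and $T(C_2) = 0$, so $K(Q_8) = T(Q_8) \cong C_4$ or $C_4 \oplus C_2$ by \cref{resultq8}, not $C_2$. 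More seriously, your disposal of the odd-prime case by citing ``$K(G) = 0$ for $p$ odd'' is circular: for odd $p$ the only members of $\mathcal{E}$ that occur as subgroups of $G$ are the rank-$2$ elementary abelians, so for odd $p$ the theorem essentially \emph{is} the statement that $K(G)$ vanishes (combined with your step (i)); moreover, that vanishing is the main result of the later Carlson--Th\'evenaz classification of torsion endotrivial modules, which is built on top of detection theorems of exactly the kind you are trying to prove, so it cannot be used as an input here.
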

    They then apply it to $Q_{2^n}$: Noting that $Q_8$ naturally sits in $Q_{2^n}$ as a subgroup, they study this restriction map to explicitly construct the endotrivial modules for $Q_{2^n}$. In contrast, with our method, the computations for the generalized quaternion groups are logically independent of the computations for $Q_8$. 
\end{rmk}

\appendix

\section{Appendix: Products in Tate Cohomology}
\label{appendixtate}

In this section, we describe the multiplicative structure of $\pi_{-*}(k^{tG}) = \widehat{H}^*(G;k)$ for certain classes of $p$-groups $G$ (namely, for $G$ Cohen--Macaulay).  These ideas are certainly not new or original, but are necessary for our Tate spectral sequence calculations.

First recall that for $G$ a cyclic $p$-group, the Tate cohomology rings can be obtained by inverting the polynomial generator in the group cohomology ring $H^*(G;k)$. One can show this by computing explicit resolutions, or via spectral sequence computations as we did in \cref{sectiongaloisdescent}.

\begin{lem}
	\label{tatereference}
	The ring structure on Tate cohomology $\wh{H}^*(C_{p^n};k)$ is described as follows.
	\begin{align*}
		\wh{H}^*(C_{p^n};k) &\cong k[e^{\pm 1}] & & \text{if $p^n = 2$;} \\
		\wh{H}^{*}(C_{p^n};k) & \cong k[e_1,e_2^{\pm 1}] / (e_1^2) \qquad & &\text{if $p^n \geq 3$.} \end{align*}
	Here $|e| = 1$, $|e_1| = 1$, and $|e_2| = 2$, and they may be identified with the generators on group cohomology. 
\end{lem}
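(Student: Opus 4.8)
The plan is to read off $\widehat{H}^*(C_{p^n};k)$ from a complete resolution of the trivial module $k$ over the group algebra $kC_{p^n}$, and then pin down the ring structure. Writing $C_{p^n} = \langle g\rangle$, I would start from the standard free resolution of $k$ whose differentials alternate between multiplication by $g-1$ and by the norm element $N = 1 + g + \cdots + g^{p^n-1}$; splicing this with its dual gives the familiar doubly-infinite, $2$-periodic complete resolution $\cdots \xrightarrow{N} kC_{p^n} \xrightarrow{g-1} kC_{p^n} \xrightarrow{N} kC_{p^n} \xrightarrow{g-1} \cdots$ computing Tate cohomology. Applying $\Hom_{kC_{p^n}}(\blank,k)$ kills both differentials: $g-1$ acts as zero because $g$ fixes $k$, and $N$ acts as multiplication by $|C_{p^n}| = p^n$, which vanishes in characteristic $p$. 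Hence $\widehat{H}^i(C_{p^n};k)\cong k$ for every $i\in\bb{Z}$, which settles the additive structure in both cases.

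It remains to identify the products, for which the two relevant inputs are periodicity and the behaviour of the degree-one class. Since cyclic groups have periodic cohomology, cup product with the periodicity generator --- of degree $2$ when $p^n\geq 3$, and of degree $1$ for $C_2$, where the resolution degenerates to the $1$-periodic complex $\cdots\xrightarrow{g+1}kC_2\xrightarrow{g+1}kC_2\xrightarrow{g+1}\cdots$ as $g-1=N$ in characteristic $2$ --- is an isomorphism, and it becomes invertible in Tate cohomology because its inverse is realised in the negative range. In the $C_2$ case this immediately yields $\widehat{H}^*(C_2;k)\cong k[e^{\pm 1}]$ with $|e|=1$, the powers $e^i$ generating the one-dimensional groups $\widehat{H}^i$.

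For $p^n\geq 3$ I would write $e_2$ for the invertible degree-two periodicity class and $e_1$ for the degree-one generator, so that only the single relation $e_1^2 = 0$ is left to check; inverting $e_2$ then gives $\widehat{H}^*(C_{p^n};k)\cong k[e_1,e_2^{\pm 1}]/(e_1^2)$. When $p$ is odd this relation is forced by graded-commutativity, so the genuine content --- and what I expect to be the main obstacle --- is the case $p=2$, $n\geq 2$ (that is, $C_4, C_8, \dots$), where graded-commutativity says nothing. Here I would use the identity $e_1^2 = \mathrm{Sq}^1(e_1) = \beta(e_1)$ for the Bockstein $\beta$ of $\bb{Z}/2\to\bb{Z}/4\to\bb{Z}/2$, together with the observation that $e_1$ is the mod-$2$ reduction of a generator of $H^1(C_{2^n};\bb{Z}/2^n)$ and hence lifts to $\bb{Z}/4$-coefficients precisely when $n\geq 2$; this forces $\beta(e_1) = 0$ and therefore $e_1^2 = 0$. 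This is exactly the computation that separates $C_2$ (where $e^2\neq 0$) from the higher cyclic $2$-groups.

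Throughout I would first work over the prime field and then base-change, using that group cohomology commutes with the field extension $\bb{F}_p\to k$; the generators are thus identified with those of $H^*(C_{p^n};k)$, as claimed. As an alternative to the complete resolution, one could instead invoke Greenlees' description (\cref{thm:greenleestate}) of $k^{tC_{p^n}}$ as the localization of $k^{hC_{p^n}}$ away from the augmentation ideal: on homotopy groups this simply inverts the polynomial periodicity generator of $H^*(C_{p^n};k)$, so the Tate ring structure follows formally from the known group-cohomology ring structure. The same answer also drops out of the Tate spectral sequence computations of \cref{sectiongaloisdescent}, with the base cases supplied by the resolution above.
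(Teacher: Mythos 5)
Your proposal is correct and matches the paper's approach: the paper gives no written proof, merely asserting that the Tate ring is obtained by inverting the polynomial generator of $H^*(C_{p^n};k)$ and noting that this "can be shown by computing explicit resolutions, or via spectral sequence computations," and your argument carries out the first of these routes in detail while also citing the second (Greenlees localization / Tate spectral sequence) as an alternative. The one substantive point you supply that the paper leaves implicit --- the verification that $e_1^2 = 0$ for $C_{2^n}$, $n \geq 2$, via $e_1^2 = \mathrm{Sq}^1(e_1) = \beta(e_1) = 0$ because $e_1$ lifts to $\mathbb{Z}/4$-coefficients --- is exactly the subtlety separating $C_2$ from the larger cyclic $2$-groups, and your treatment of it is correct.
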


For elementary abelian groups $G \cong (C_p)^n$ with $n \geq 2$, the ring structure can also be derived from the product in group cohomology $H^*((C_p)^n;k)$, but is more complicated.  For example, $\wh{H}^*((C_p)^n;k)$ is not finitely generated as a ring or a module over $H^*((C_p)^n;k)$.  This follows from work of Benson-Carlson, who in fact prove that for a group of $p$-rank at least 2, there are no non-zero products in negative Tate cohomology:

\begin{thm}[{\cite[Theorem 3.1]{Benson1992-df}}]
\label{thm:prodnegcohomology}
Suppose that the $p$-rank of $G$ is two or more.  If $H^*(G;k)$ is Cohen-Macaulay, then 
\[\wh{H}^m(G;k) \cdot \wh{H}^n(G;k) = 0\]
for all $m,n < 0$.
\end{thm}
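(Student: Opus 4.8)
The plan is to leverage the Cohen--Macaulay hypothesis through the local cohomology description of Tate cohomology, together with Tate duality, and to isolate the exact role played by the rank being at least two. Write $R = H^*(G;k)$ and let $\mf{m} \subseteq R$ denote the graded augmentation ideal. By Quillen's theorem \cite{Quillen71} the Krull dimension of $R$ equals the $p$-rank $d$ of $G$, so by hypothesis $d \geq 2$, and since $R$ is Cohen--Macaulay we may choose a homogeneous system of parameters $\zeta_1, \ldots, \zeta_d$ of positive degree forming a regular sequence. Throughout I would identify $\pi_{-*}(k^{tG}) \cong \wh{H}^*(G;k)$ as in \cref{omegaistate} and freely use the perfect Tate duality pairing of \cref{tatereference}, which lets me detect the vanishing of a product against positive-degree classes.

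First I would record the structural identification of negative Tate cohomology. By Greenlees' theorem (\cref{thm:greenleestate}), $k^{tG}$ is obtained from $k^{hG}$ by localizing away from $\mf{m}$ (up to radical), so that the negative homotopy of $k^{tG}$ is computed by the stable Koszul complex on $\zeta_1, \ldots, \zeta_d$, whose cohomology is the local cohomology $H^*_{\mf{m}}(R)$. The Cohen--Macaulay condition forces $H^i_{\mf{m}}(R) = 0$ for $i \neq d$, so this collapses and identifies $\wh{H}^{<0}(G;k)$, as a graded $R$-module, with a shift of the top local cohomology $H^d_{\mf{m}}(R)$. The decisive qualitative consequence --- and the point at which $d \geq 2$ first enters --- is that for $d \geq 2$ no single parameter $\zeta_i$ becomes invertible in $\wh{H}^*(G;k)$, so that every negative-degree class is $\mf{m}$-power torsion. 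This stands in sharp contrast to the rank-one situation, where one inverts the single parameter outright (cf.\ \cref{tatereference}) and negative products such as those for $C_p$ are visibly nonzero.

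Finally I would prove the vanishing by transporting the multiplication onto $H^d_{\mf{m}}(R)$. Given $\alpha \in \wh{H}^{-a}$ and $\beta \in \wh{H}^{-b}$ with $a,b \geq 1$, nondegeneracy of Tate duality reduces the claim $\alpha\beta = 0$ to showing $\langle \alpha\beta, \gamma\rangle = 0$ for every $\gamma \in \wh{H}^{a+b-1}(G;k) = H^{a+b-1}(G;k)$; rewriting this as $\langle \alpha, \beta\gamma\rangle$ via associativity turns the negative--negative product into a question about the interaction of the top local cohomology with the positive-degree module action, where $\beta\gamma \in \wh{H}^{a-1}$. Representing $\alpha$ and $\beta$ by Koszul cocycles of the form $r/(\zeta_1 \cdots \zeta_d)^N$, I would check that the representing cocycle of $\alpha\beta$ already lies in the image of $\bigoplus_i R_{\zeta_1 \cdots \widehat{\zeta_i} \cdots \zeta_d}$, hence is a coboundary and vanishes in $H^d_{\mf{m}}(R)$, the cancellation being available precisely because at least two independent parameters are present. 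I expect the genuine obstacle to lie exactly here: the multiplication induced on $H^d_{\mf{m}}(R)$ is \emph{not} $R$-bilinear, since it carries a degree-one shift coming from the connecting map in the stable Koszul complex, so one must track these connecting maps carefully rather than multiplying representatives naively. It is this shift, together with the availability of $d \geq 2$ parameters to absorb it, that makes the product a coboundary while leaving the rank-one case untouched.
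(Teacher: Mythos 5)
Your setup---Greenlees' identification $k^{tG} \simeq R[I^{-1}]$ (\cref{thm:greenleestate}), the Cohen--Macaulay hypothesis forcing local cohomology into the single degree $d$, and the observation that rank $\geq 2$ is what separates this situation from the periodic rank-one case---matches the framework the paper uses in its appendix. But your proof has a genuine gap at exactly the step you flag yourself: you never actually compute the product of two negative-degree classes. Your proposed mechanism (represent $\alpha,\beta$ by \v{C}ech cocycles $r/(\zeta_1\cdots\zeta_d)^N$ and check that the cocycle representing $\alpha\beta$ is a coboundary) presupposes a chain-level multiplication on the stable Koszul/\v{C}ech complex that computes the Tate product, and, as you yourself note, no such naive multiplication exists: the product on $\pi_*(k^{tG})$ comes from the ring-spectrum structure of $R[I^{-1}]$, not from multiplying local-cohomology representatives. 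Writing ``I would check'' and ``I expect the genuine obstacle to lie exactly here'' concedes that the central step is unproved. The Tate-duality detour does not repair this: reducing $\alpha\beta = 0$ to $\alpha\cdot(\beta\gamma) = 0$ for all positive-degree $\gamma$ still requires computing a product involving the negative-degree class $\alpha$, so it is not actually a reduction, and nondegeneracy of the pairing gives you no vanishing for free.

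The missing idea is the filtration count that the paper extracts from the \emph{multiplicative structure of the Greenlees/\v{C}ech spectral sequence}. Because $H^*(G;k)$ is free over a polynomial subalgebra $A_* = k[\zeta_1,\ldots,\zeta_d]$ (miracle flatness, \cref{miracleflatness}), the spectral sequence $\check{C}H^*_J(A_*) \Rightarrow \pi_*\big(A[J^{-1}]\big)$---and hence, after tensoring up by freeness, the one converging to $\pi_*(k^{tG})$---collapses and is concentrated in the two filtration rows $s = 0$ and $s = d-1$. Negative Tate classes are detected in filtration $s = d - 1$; multiplicativity forces the product of two such classes into filtration $\geq 2(d-1)$; and since $2(d-1) > d-1$ precisely when $d \geq 2$, there is nothing in that filtration, so the product vanishes. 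This one-line count is the real content of the theorem, and it also explains why rank one escapes: there $d - 1 = 0$ and no filtration gap opens up. Your write-up assembles all the ingredients for this argument but substitutes an unjustified cocycle computation for the step that actually proves the vanishing.
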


\begin{rmk}
Note that the groups $G$ of $p$-rank 1 are either cyclic or (generalized) quaternion.  We saw in  \cref{tatereference} that the Tate cohomology of cyclic groups was periodic.  For (generalized) quaternion groups $Q$, the Tate cohomology $\wh{H}^*(Q;k)$ is also periodic:
\begin{align*}
		\wh{H}^*(Q_8;k) &\cong k[x_1, x_2, e_4^{\pm1}]/(x_1^2 + x_1x_2 + x_2^2, x_1^2x_2 + x_1x_2^2) \\
		\wh{H}^*(Q_{2^n};k) &\cong k[x_1, x_2, e_4^{\pm1}]/(x_1x_2, x_1^3 + x_2^3)
\end{align*}

\end{rmk}


For our endo-trivial module calculations, we are especially interested in the ring structure of $\pi_{*}(k^{tG})$ in the case that $G$ is elementary abelian or a dihedral $p$-group. These groups belong to a class of groups called the \textbf{Cohen--Macaulay groups}, whose Tate cohomology rings can be understood especially well. In fact, we will explicitly describe the ring structure on $\wh{H}^*(G;k)$ using the Greenlees spectral sequence, thereby recording a proof of \cref{thm:prodnegcohomology} along the way.

\subsection{Cohen--Macaulay groups}

We begin by recalling the definitions. Recall that if $R$ is a commutative Noetherian local ring, then we have the notions of depth and Krull dimension.

Let $\mathfrak{m}$ be the maximal ideal of a commutative Noetherian local ring $R$.  The \textbf{depth} of $R$ is the smallest integer $i$ such that $\Ext^i_R(R/\mathfrak{m},R) \neq 0$.  Equivalently, the depth of $R$ is the supremum of the lengths of regular sequences in $\mathfrak{m}$.  The \textbf{Krull dimension} of $R$ is the supremum of the lengths of chains of prime ideals in $R$. In general, $\textnormal{depth}(R) \leq \textnormal{dim}(R)$.  If equality holds, we say that $R$ is a \textbf{Cohen--Macaulay} ring. 

We are interested in the case $R = H^*(G;k)$. These rings are finitely generated, graded-commutative $k$-algebras over $R_0 = k$. In this case, one can use Noether normalization to construct a graded polynomial subring $A = k[x_1, \ldots, x_n]$ such that $R = H^*(G;k)$ is finitely generated over $A$.  In this situation, we can easily identify Cohen-Macaulay rings using Hironaka's criterion (also known as miracle flatness).

\begin{lem}[``Miracle Flatness'']
	\label{miracleflatness}
	Let $R$ be a finitely generated, graded, commutative $k$-algebras over $R_0 = k$, with $R_i = 0$ for $i < 0$, that is finitely generated as a module over a graded polynomial subring $A = k[x_1, \ldots, x_n]$. Then $R$ is Cohen--Macaulay if and only if $R$ is free over~$A$.
\end{lem}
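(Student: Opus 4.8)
The plan is to run everything through the standard dictionary between depth, regular sequences, and Koszul homology, and then to detect freeness homologically. First I would record the structural facts that come for free from the hypotheses. Since $A = k[x_1,\ldots,x_n]$ is a polynomial ring of Krull dimension $n$ and $R$ is module-finite over $A$, the inclusion $A \hookrightarrow R$ is integral, so $\dim R = \dim A = n$; moreover the elements $x_1, \ldots, x_n$ constitute a homogeneous system of parameters for $R$. Because $R$ is graded, finitely generated over $R_0 = k$, and nonnegatively graded, I may work throughout with the irrelevant maximal ideal $\mf{m} = R_+$, and I would invoke the standard characterisation that such a graded ring is Cohen--Macaulay if and only if some (equivalently any) homogeneous system of parameters is a regular sequence --- that is, if and only if $\operatorname{depth} R = \dim R = n$, realised by the sequence $x_1, \ldots, x_n$.

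The heart of the argument is a homological reformulation of the regular-sequence condition. Since $A$ is the polynomial ring on $x_1, \ldots, x_n$, the Koszul complex $K_\bullet(x_1, \ldots, x_n)$ is a finite free resolution of the residue field $k = A/(x_1,\ldots,x_n)$ over $A$. Consequently I can compute $\Tor_i^A(R,k) \cong H_i\big(K_\bullet(x_1,\ldots,x_n)\otimes_A R\big)$, the Koszul homology of the sequence $x_1, \ldots, x_n$ acting on $R$. Because the $x_i$ lie in the irrelevant ideal, the classical acyclicity criterion for Koszul complexes tells me that these Koszul homology groups vanish in all positive degrees precisely when $x_1, \ldots, x_n$ is an $R$-regular sequence. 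Combined with the previous paragraph, this yields
\[ R \text{ is Cohen--Macaulay} \iff x_1, \ldots, x_n \text{ is } R\text{-regular} \iff \Tor_i^A(R,k) = 0 \text{ for all } i > 0\,\text{.}\]

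It remains to identify the vanishing of higher $\Tor$ with freeness of $R$ over $A$. This is a graded Nakayama argument: choosing homogeneous elements of $R$ whose images form a $k$-basis of $R/(x_1,\ldots,x_n)R$ produces, by graded Nakayama, a surjection $A^{\oplus r} \thra R$ of graded $A$-modules; writing $N$ for its kernel, the long exact sequence in $\Tor$ shows that $N$ is finitely generated and graded and that $\Tor_1^A(R,k) = 0$ forces $N = (x_1,\ldots,x_n)N$, whence $N = 0$ by graded Nakayama again. Thus $\Tor_{>0}^A(R,k) = 0$ is equivalent to $R$ being free over $A$, and stringing the equivalences together completes the proof. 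Alternatively, the whole statement is an instance of the classical ``miracle flatness'' theorem: $A$ is regular, $R$ is Cohen--Macaulay, and the finite map $\spec R \to \spec A$ has $0$-dimensional fibres of constant dimension $\dim R - \dim A = 0$, so it is flat, hence free since a finitely generated graded flat module is free.

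I expect the main obstacle to be expository rather than mathematical: all three links in the chain are standard, but making the graded hypotheses do the work of the usual local ones --- so that Nakayama applies to the irrelevant ideal without localising and so that ``flat equals free'' holds verbatim --- requires a little care, and the Koszul-homology characterisation of regular sequences is the one technical input I would cite rather than reprove.
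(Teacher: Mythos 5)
Your proof is correct, but there is nothing in the paper to compare it against: the paper states this lemma without proof, invoking it as the standard ``Hironaka criterion'' for graded rings. Your argument is the canonical way to fill that gap, and every link in your chain holds: $x_1,\ldots,x_n$ is a homogeneous system of parameters because $R/(x_1,\ldots,x_n)R$ is module-finite over $A/(x_1,\ldots,x_n) = k$; the graded-CM criterion (some, equivalently any, h.s.o.p.\ is a regular sequence) applies at the irrelevant maximal ideal; the Koszul complex on $x_1,\ldots,x_n$ resolves $k$ over $A$, so $\Tor_i^A(R,k)$ is Koszul homology, and depth-sensitivity of the Koszul complex (valid here since the $x_i$ are homogeneous of positive degree and $R$ is a finitely generated graded module) converts regularity into $\Tor$-vanishing; and the graded Nakayama argument converts $\Tor_1$-vanishing into freeness, using that the kernel $N \subseteq A^{\oplus r}$ is a finitely generated graded module bounded below. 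Your closing ``alternative'' via classical miracle flatness only gives the direction CM $\Rightarrow$ free, so it is a supplement rather than a substitute, but you do not rely on it. One caveat worth recording if this proof were inserted into the paper: the lemma is applied there to $H^*(G;k)$, which for odd $p$ is only graded-commutative (it has odd-degree exterior generators), so one must either pass to a commutative quotient/subring situation or note that the standard commutative-algebra inputs (system of parameters, Koszul depth-sensitivity, graded Nakayama) go through verbatim for finitely generated graded modules over the commutative polynomial subring $A$, which is all your argument actually uses.
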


Using this result, direct computations of the cohomology rings make it clear that $(C_p)^n$, $D_{2^n}$, and $Q_{2^n}$ are Cohen--Macaulay:
\begin{align*}
        H^*((C_p)^n;k) &\cong \left\{
		\begin{array}{ll}
		k[x_1, \ldots, x_n]  & |x_i| = 1, \ p = 2 \\
		k[x_1, \ldots, x_n] \otimes \Lambda(y_1, \ldots, y_n) & |x_i| = 2, |y_i| = 1,  \ p \neq 2
		\end{array}
		\right.\\
        H^*(D_{2^{n-1}}; k) &\cong k[x_1,u, z]/(ux_1 +x_1^2)\\
		H^*(Q_8;k) &\cong k[x_1, x_2, e_4]/(x_1^2 + x_1x_2 + x_2^2, x_1^2x_2 + x_1x_2^2) \\
		H^*(Q_{2^n};k) &\cong k[x_1, x_2, e_4]/(x_1x_2, x_1^3 + x_2^3)
\end{align*}

In general, the depth and dimension of $H^*(G;k)$ is closely related to that of the elementary abelian subgroups $(C_p)^n \subset G$.  For example, Quillen \cite{Quillen71} proved that the dimension of $H^*(G; k)$ is equal to the $p$-rank of the largest elementary abelian subgroup of $G$.  That is, the largest integer $n$ such that $(C_p)^n$ is a subgroup of $G$. Furthermore, a theorem of Duflot \cite{Duflot1981} tells us that the depth of $H^*(G; k)$ is greater than or equal to the $p$-rank of the largest \textit{central} elementary abelian subgroup of $G$.  That is, the largest integer $n$ such that $(C_p)^n$ is a \textit{central} subgroup of $G$.  The idea is that one can find a homogeneous system of parameters in $H^*(G;k)$ that restricts to a regular sequence in $H^*((C_p)^n;k)$.   One then uses induction to show this homogeneous system of parameters is in fact a regular sequence.

As a consequence, if $H^*(G; k)$ is Cohen--Macaulay, then the depth is equal to the dimension is equal to the $p$-rank.  For $H^*((C_p)^n;k)$, the $p$-rank is $n$, and for $H^*(Q_{2^n};k)$, the $p$-rank is 1. Note that for $H^*(D_{2^{n-1}};k)$, the $p$-rank is 2, whereas the central $p$-rank is 1.

\subsection{The {\v C}ech cohomology spectral sequence}

We now study multiplication in Tate cohomology of $p$-groups $G$ by studying the ring spectrum~$k^{tG}$.  In particular, we interpret $k^{tG}$ as a \v{C}ech spectrum $R[I^{-1}]$, which allows us to do computations using the \v{C}ech cohomology spectral sequence.
We first discuss the Koszul and \v{C}ech constructions of Greenlees--May, cf. \cite{may1995completions}. These generalize the notion of the flat stable Koszul chain complex and the \v{C}ech complex in commutative algebra.

Let $R$ be an $\bb{E}_\infty$-ring spectrum. Given $x \in \pi_*(R)$, we define the Koszul spectrum $K(x)$ as the fibre occurring in the fibre sequence
\[ K(x) \to R \to R[x^{-1}] \text{.} \]
More generally, $I = (x_1, \ldots, x_n)$ is a finitely generated ideal in $\pi_* (R)$, then we define the \textbf{Koszul spectrum} $K(I)$ as a tensor product $K(x_1) \otimes_R \cdots \otimes_R K(x_n)$. Up to homotopy, this construction depends only on the radical of the ideal $I$. 

We now define the \textbf{{\v C}ech spectrum} $R[I^{-1}]$ to be the cofiber of the map $K(I) \to R$. We may regard it as the localisation away from the ideal $I$.  Note that if $I = (x)$ is principal, then the \v{C}ech spectrum $R[I^{-1}]$ is precisely $R[x^{-1}]$.  However, for an arbitrary finitely generated ideal $I = (x_1, \ldots, x_n)$, $R[I^{-1}]$ is generally not the same as the localization at a multiplicatively closed subset of $\pi_*(R)$, cf. Theorem 5.1 of \cite{may1995completions}. Nevertheless, there exists a \v{C}ech cohomology spectral sequence
\[E_2 \cong \check{C}H^{-s,-t}_{I}\big(\pi_*(R)\big) \Rightarrow \pi_{s+t}\big(R[I^{-1}]\big)\,\text{,}\]
which allows us to compute the homotopy groups of $R[I^{-1}]$ using methods of commutative algebra.

As it happens, one may identify the Tate fixed points $k^{tG}$ with a {\v C}ech spectrum, thanks to the following theorem of Greenlees.

\begin{thm}[{\cite[Thm. 4.1]{Greenlees95commutativealgebra}}]
\label{thm:greenleestate}
    Let $G$ be a $p$-group acting trivially on the Eilenberg--MacLane spectrum $k$.  Let $R = k^{hG}$ be the homotopy fixed points, so that $\pi_{-*}(R) \cong H^*(G;k)$. Define $I$ to be the augmentation ideal $I = \text{ker}(H^*(G;k) \to k)$. Then there is a homotopy equivalence between $R[I^{-1}]$ and $k^{tG}$.
\end{thm}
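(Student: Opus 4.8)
The plan is to realise both $k^{tG}$ and $R[I^{-1}]$ as cofibres of maps into $R = k^{hG}$ and then identify the two fibres. By the very definition of the Tate construction we have a cofibre sequence $k_{hG} \xrightarrow{N} k^{hG} \to k^{tG}$, where $N$ is the norm map. On the other hand, the \v{C}ech spectrum sits by definition in a cofibre sequence $K(I) \to R \to R[I^{-1}]$. Since both target the same spectrum $R = k^{hG}$, it suffices to produce an equivalence $k_{hG} \simeq K(I)$ compatible with the maps to $R$; passing to cofibres then yields $k^{tG} \simeq R[I^{-1}]$.

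To construct the comparison map, recall that $K(I)$ models the derived $I$-power-torsion $\Gamma_I R$, i.e. the colocalisation of $R$ onto $I$-torsion modules, with $K(I) \to R$ the corresponding counit; in particular $\Map(M, K(I)) \simeq \Map(M, R)$ for every $I$-torsion $M$. I would first observe that $k_{hG}$ is itself $I$-power torsion: its homotopy groups are $\pi_*(k_{hG}) = H_*(G;k)$, concentrated in nonnegative degrees, while $I = H^{>0}(G;k)$ acts (through the $R$-module structure, by cap product) strictly lowering degree, so every homotopy class is annihilated by a power of $I$ and $k_{hG}[I^{-1}] \simeq 0$. Consequently the norm map $N$ factors canonically through the torsion colocalisation, producing a map $\varphi \colon k_{hG} \to K(I)$ over $R$.

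It remains to show that $\varphi$ is an equivalence, which is the heart of the matter and is precisely Greenlees' local cohomology theorem (local duality for $BG$): the natural map identifies $k_{hG}$ with $\Gamma_I k^{hG}$. I would verify this on homotopy groups using the local cohomology spectral sequence $H^{*}_I(\pi_{-*} R) \Rightarrow \pi_* K(I)$ — the torsion counterpart of the \v{C}ech spectral sequence already recorded above — whose abutment is governed by a local duality isomorphism $H^{*}_I(H^*(G;k)) \cong H_*(G;k)$ (with the appropriate regrading), matching $\pi_* k_{hG}$; one checks that $\varphi$ realises exactly this isomorphism. Here one uses that $H^*(G;k)$ is Noetherian, so that $I$ is finitely generated and $K(I)$ is defined, and that $K(I)$ depends only on $\sqrt{I}$; since $I$ is the unique maximal homogeneous ideal its variety is the closed point, which is what makes the algebraic torsion theory match the geometry.

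The main obstacle is this last step: the local cohomology theorem is genuine input and cannot be extracted from formal manipulation of cofibre sequences alone, since (as the fracture-square argument shows) trying to deduce it from the $I$-localness of the cofibres is circular — it ultimately encodes the Gorenstein duality of $BG$. From the genuine-equivariant viewpoint the same content appears by smashing the isotropy separation sequence $EG_+ \to S^0 \to \widetilde{EG}$ with $F(EG_+, k)$ and taking fixed points to recover $k_{hG} \to k^{hG} \to k^{tG}$; there the obstacle is to identify $\widetilde{EG}$-smashing with inverting the augmentation ideal, i.e. to know that the relevant Euler classes generate $I$ up to radical, which is where Quillen's description of the variety of $H^*(G;k)$ enters. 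Either route isolates the same essential input, and everything else in the argument is formal.
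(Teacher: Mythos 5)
The paper offers no proof of \cref{thm:greenleestate} to compare against: the statement is imported verbatim as Greenlees' theorem, cited from \cite{Greenlees95commutativealgebra}, and used as a black box throughout. So your attempt must be judged against the argument in the literature. Your formal skeleton is sound and is indeed the right frame: $k^{tG}$ and $R[I^{-1}]$ are the cofibres of $k_{hG}\xrightarrow{N}R$ and of $K(I)\to R$; the homotopy of $k_{hG}$ is $H_*(G;k)$, bounded below, on which the finitely generated ideal $I$ (Evens--Venkov) acts by cap product, strictly lowering degree, so $k_{hG}$ is $I$-power torsion; hence the norm factors uniquely through the torsion colocalisation $K(I)\simeq \Gamma_I R$, and an equivalence of fibres over $R$ would finish the proof. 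All of that is correct and formal.

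The genuine gap is the step you yourself flag: showing $\varphi\colon k_{hG}\to K(I)$ is an equivalence, and your first proposed verification of it is circular as written. The ``local duality isomorphism'' $H^*_I\big(H^*(G;k)\big)\cong H_*(G;k)$ is not a prior algebraic fact against which $\varphi$ can be checked: in general there is only a spectral sequence $H^*_I\big(H^*(G;k)\big)\Rightarrow H_*(G;k)$, which collapses to an isomorphism only in special cases (e.g.\ Cohen--Macaulay cohomology; for semidihedral $2$-groups the depth is strictly less than the dimension), and the assertion that this spectral sequence converges to $H_*(G;k)$ \emph{is} the local cohomology theorem --- which in the literature is obtained from the very fibre identification you are trying to prove, by computing $\pi_*K(I)$ algebraically and then invoking $K(I)\simeq k_{hG}$. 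So route one reduces the theorem to an equivalent restatement of itself, as you half concede. Your second route (isotropy separation) is the one that can actually be closed, and it is where all the theorem-specific work lives: one must produce enough representations $V_1,\dots,V_n$ so that $EG$ is modelled by the product of the unit spheres $S(\infty V_i)$, use the Thom isomorphism to identify smashing $F(EG_+,k)$ against the cofibre sequences $S(\infty V_i)_+\to S^0\to S^{\infty V_i}$ with the Koszul and localisation sequences for the Euler classes $e(V_i)$, so that $F(EG_+,k)\wedge\widetilde{EG}$ becomes the \v{C}ech spectrum on $\big(e(V_1),\dots,e(V_n)\big)$, and finally invoke Quillen's stratification theorem \cite{Quillen71} to see that this Euler-class ideal has radical $I$, whence the \v{C}ech spectrum agrees with $R[I^{-1}]$ by radical-invariance. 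You name these ingredients accurately but execute none of them. As it stands, the proposal is a correct formal frame plus an accurate map of where the content lies, not a proof: everything specific to the theorem remains inside the step you cite.
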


If $H^*(G; k)$ is Cohen--Macaulay, then by miracle flatness it is free over a polynomial subalgebra $A_* \subseteq H^*(G;k)$, say  $A_* = k[\zeta_1, \ldots, \zeta_n]$. Note that the radical of the ideal $J = (\zeta_1, \ldots, \zeta_n)$ is the ideal $I$ of elements in positive degrees.  Hence $k^{tG} \simeq R[I^{-1}] \simeq R[J^{-1}]$. Moreover, since $R_* = H^*(G;k)$ is free over $A_*$, we can reduce from understanding $R[I^{-1}]$ to understanding $A[J^{-1}]$. Hence we study the \v{C}ech cohomology spectral sequence:
\[E_2 \cong \check{C}H^*_{(\zeta_1, \ldots, \zeta_n)}\big(k[\zeta_1, \ldots, \zeta_n]\big) \Rightarrow \pi_*(A[J^{-1}])\,\text{.}\]
It is easy to calculate the $E_2$ page of this spectral sequence, as we have induced long exact sequences relating \v{C}ech cohomology to local cohomology coming from the fiber sequence $K(I) \to R \to R[I^{-1}]$:

\begin{thm}[\cite{may1995completions}]
    For an $R$-module $M$, we have an exact sequence
    \[\begin{tikzcd}
    	0 \ar[r] & H_0^I(M) \ar[r] & M \ar[r] & \check{C}H^0_I(M) \ar[r] & H_1^I(M) \ar[r] & 0 \end{tikzcd}\]
    and an isomorphism
    \[ H^I_s(M) \cong \check{C}H^{s-1}_I(M) \qquad \text{for all $s \geq 1$.}\]
\end{thm}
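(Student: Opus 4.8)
The plan is to obtain both assertions as formal consequences of a single long exact sequence, namely the one attached to the defining cofibre sequence of the \v Cech construction. Recall that $R[I^{-1}]$ was defined as the cofibre of the natural map $K(I) \to R$. Tensoring this cofibre sequence with an $R$-module $M$ over $R$ yields a cofibre sequence
\[ K(I) \otimes_R M \to M \to M[I^{-1}]\,\text{,}\]
where we write $M[I^{-1}] := R[I^{-1}] \otimes_R M$. The two outer terms compute the two theories at issue: the Koszul term $K(I) \otimes_R M$ computes local cohomology, while the \v Cech term $M[I^{-1}]$ computes \v Cech cohomology.

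First I would make the identification of homotopy groups precise and fix gradings. Since $K(I) = K(x_1) \otimes_R \cdots \otimes_R K(x_n)$ is an iterated tensor product of the two-term fibres $K(x_i) = \operatorname{fib}\big(R \to R[x_i^{-1}]\big)$, the object $K(I) \otimes_R M$ is modelled by the flat stable Koszul complex on $M$, whose cohomology is by definition $H^I_*(M)$; dually $M[I^{-1}]$ is modelled by the truncated \v Cech complex, whose cohomology is $\check{C}H^*_I(M)$. Concretely, $H^I_s(M) = \pi_{-s}\big(K(I)\otimes_R M\big)$ and $\check{C}H^s_I(M) = \pi_{-s}\big(M[I^{-1}]\big)$, so that the long exact sequence in homotopy of the cofibre sequence above reads
\[ \cdots \to H^I_s(M) \to \pi_{-s}(M) \to \check{C}H^s_I(M) \to H^I_{s+1}(M) \to \pi_{-s-1}(M) \to \cdots\,\text{.}\]

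The decisive input is then that $M$, viewed as an Eilenberg--MacLane object, has homotopy concentrated in a single degree: $\pi_0(M) = M$ and $\pi_j(M) = 0$ for $j \neq 0$. Feeding this into the long exact sequence splits it into two regimes. For $s \geq 1$ both flanking terms $\pi_{-s}(M)$ and $\pi_{-s-1}(M)$ vanish, so the connecting maps become isomorphisms $\check{C}H^s_I(M) \cong H^I_{s+1}(M)$; after reindexing this is the asserted isomorphism $H^I_s(M) \cong \check{C}H^{s-1}_I(M)$ in the range $s \geq 2$. Near the bottom, the only surviving copy of $M$ sits in the middle: using in addition that $H^I_{-1}(M)=0$ and $\check{C}H^{-1}_I(M)=0$ (local and \v Cech cohomology of a discrete module vanish in negative degrees, by connectivity of the Koszul and \v Cech terms), the segment around $s=0$ collapses to the four-term exact sequence $0 \to H^I_0(M) \to M \to \check{C}H^0_I(M) \to H^I_1(M) \to 0$, which in particular pins down $H^I_0$ and $H^I_1$ more precisely than a bare isomorphism would.

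I expect the only genuine obstacle to be bookkeeping: aligning the homologically graded homotopy of $K(I)\otimes_R M$ with the cohomological grading of local cohomology, and verifying the two boundary vanishing facts that let the four-term sequence begin and end where claimed. Both follow purely from the concentration of $M$ in degree $0$, so no new computation is required. An entirely algebraic route is available as a cross-check: one writes the short exact sequence of cochain complexes relating the stable Koszul complex (computing $H^*_I(M)$) to its truncation (computing $\check{C}H^*_I(M)$), with quotient $M$ placed in degree $0$, and takes the associated long exact sequence in cohomology; this recovers the same four-term sequence and isomorphisms and makes the degree shift manifest, in line with \cite{may1995completions}.
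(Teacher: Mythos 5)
Your argument is correct, and it is exactly the one this paper has in mind: the paper offers no proof of this statement at all (it is quoted from Greenlees--May \cite{may1995completions}), and it is introduced in the text precisely as the long exact sequence ``coming from the fiber sequence $K(I) \to R \to R[I^{-1}]$,'' which is the sequence you run after tensoring with $M$ and using that $M$ is concentrated in degree $0$. One caveat worth recording: as your proof correctly shows, the isomorphism $H^I_s(M) \cong \check{C}H^{s-1}_I(M)$ holds only for $s \geq 2$, the case $s = 1$ being replaced by the surjection $\check{C}H^0_I(M) \twoheadrightarrow H_1^I(M)$ appearing in the four-term sequence; the range ``$s \geq 1$'' in the printed statement is an off-by-one slip, since an isomorphism at $s = 1$ would force the map $M \to \check{C}H^0_I(M)$ to vanish and hence be inconsistent with the exact sequence whenever $H_0^I(M) \neq M$.
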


Since the $(\zeta_1, \ldots, \zeta_n)$ form a regular sequence in $A_* = k[\zeta_1, \ldots, \zeta_n]$, we observe that $H^j_J(A_*) = 0$ for $j \neq \text{dim}(A_*)=n$.  Hence the $E_2$-page of this spectral sequence is concentrated in two rows, at $s=0$ and $s=n-1$, where we have $\check{C}H_J^0(A_*) \cong A_*$, and $\check{C}H_J^{n - 1}(A_*) \cong k[x_1^{-1},\ldots,x_n^{-1}]$.

The multiplication in the spectral sequnce recovers the multiplication structure on $A[J^{-1}]$.  This allows us to compute $\pi_*(R[I^{-1}]) \cong \pi_*(k^{tG}) \cong \widehat{H}^*(G;k)$ for a Cohen--Macaulay group $G$; indeed, \cref{miracleflatness} tells us that $\pi_*(k^{tG})$ is free over $\pi_*(A[J^{-1}])$ so we may tensor up the spectral sequence to $\pi_*(k^{tG})$ without exactness issues. Note that these generators live only in positive degrees in cohomology, and are represented in Adams degree $(-t_i,0)$.  That is, they lower $t-s$ degree, and preserve $s$ degree.

\subsection{Tate cohomology of elementary abelian groups}
\label{appendix:elemab}

Let us now turn to an explicit description for elementary abelian groups. Recall that if $k$ is of characteristic $p$, and $G \cong (C_p)^n$, then
\[H^*(G;k) = \left\{
		\begin{array}{ll}
		k[x_1, \ldots, x_n]  & |x_i| = 1, \ p = 2 \\
		k[x_1, \ldots, x_n] \otimes \Lambda(y_1, \ldots, y_n) & |x_i| = 2, |y_i| = 1,  \ p \neq 2
		\end{array}
		\right.\]
We have a regular sequence $I = (x_1, \ldots, x_n)$, and moreover we know that $A_* = k[x_1, \ldots, x_n]$ corresponds to $\pi_*(k^{h\mathbb{T}^n}) \cong k[x_1, \ldots, x_n]$ where $|x_i| = -1$ for $p=2$, and $|x_i| = -2$ for $p$ odd. To compute $\pi_*(k^{t(C_p)^n})$, we first calculate the \v{C}ech cohomology spectral sequence
\[E_2 \cong \check{C}H^*_{I}\big(\pi_*(k^{h\mathbb{T}^n})\big) \Rightarrow \pi_*(k^{t\mathbb{T}^n})\,\text{.}\]
As before, the $E_2$ page of this spectral sequence is concentrated in two rows, at $s=0$ and $s=n-1$. In these rows, we have $\check{C}H^0_{I}\big(\pi_*(k^{h\mathbb{T}^n})\big) \cong \pi_*\big(k^{h\mathbb{T}^n}\big)$ and $\check{C}H^{n-1}_{I}\big(\pi_*(k^{h\mathbb{T}^n})\big) \cong k[x_1^{-1}, \ldots, x_n^{-1}]$, shifted in $t$-degree by $n$ for $p=2$ (Figure \ref{CechP2}), or $2n$ for $p$ odd (Figure \ref{CechPOdd}).

\begin{figure}
\centering
\begin{tikzpicture}
	\matrix(m)[matrix of math nodes, nodes in empty cells, nodes = {minimum width = 8ex, minimum height = 4ex, outer sep = 1ex}, column sep = 1ex, row sep = 1ex]
	{              &   &     &     &     & \\
	n-1      &  &  &  &  & k & k^{d_1} & k^{d_2} & k^{d_3} & \\
	\vdots      & &  &  &  &  &  &  &  & \\
	0      & k^{d_3} & k^{d_2} & k^{d_1} & k &  & &  &  & \\
	\strut & -3 & -2 & -1 & 0  &  1 & 2 & 3 & 4 & \strut \\};
	
	\draw[thick] (m-1-1.south east) -- (m-5-1.east);
 	\draw[thick] (m-5-1.north) -- (m-5-10.north west);
	
\end{tikzpicture}
\caption{The Adams graded $E_2 = E_\infty$ page of the \v{C}ech cohomology spectral sequence computing $\pi_*(k^{t\T^n})$, with $p=2$.  Let $d_i = \textnormal{dim}(\pi_{i}(k^{h\T^n})) =  {n-1 + i \choose n-1}$.}
\label{CechP2}
\end{figure}

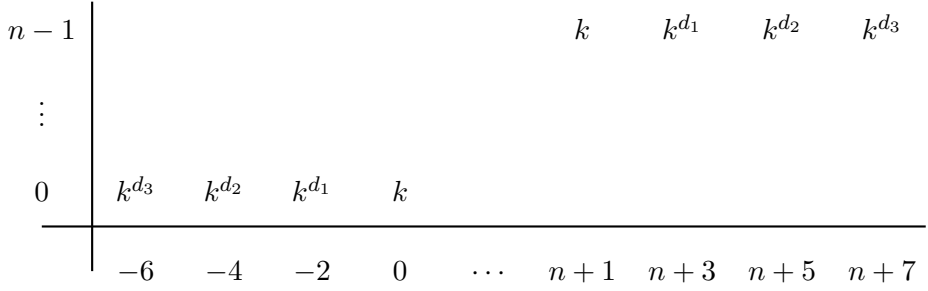
\begin{figure}
\centering
\begin{tikzpicture}
	\matrix(m)[matrix of math nodes, nodes in empty cells, nodes = {minimum width = 6ex, minimum height = 4ex, outer sep = 1ex}, column sep = 1ex, row sep = 1ex]
	{              &   &     &     &     & \\
	n-1      &  &  &  &  & &  k & k^{d_1} & k^{d_2} & k^{d_3} & \\
	\vdots      & &  &  &  &  & &  &  &  & \\
	0      & k^{d_3} & k^{d_2} & k^{d_1} & k & &  & &  &  & \\
	\strut & -6 & -4 & -2 & 0 & \cdots &  n+1 & n+3 & n+5 & n+7 & \strut \\};
	
	\draw[thick] (m-1-1.south east) -- (m-5-1.east);
 	\draw[thick] (m-5-1.north) -- (m-5-11.north west);
	
\end{tikzpicture}
\caption{The Adams graded $E_2 =E_\infty$ page of the \v{C}ech cohomology spectral sequence computing $\pi_*(k^{t\T^n})$, with $p$ odd.  Let $d_i = \dim(\pi_{2i}(k^{h\T^n})) =  {{n-1 + i} \choose {n-1}}$.}
\label{CechPOdd}
\end{figure}

The spectral sequence collapses for degree reasons, and note also that there is nowhere for multiplication by two elements in positive degree to land, and so therefore must vanish.  The multiplicative structure on the $E_\infty$-page gives us the multiplicative structure on $\pi_*(k^{t\mathbb{T}^n})$, from which we infer the following result.  

\begin{thm}\label{thm:tate-mult}
The multiplication in $\pi_*(k^{t\mathbb{T}^n})$ is described in the following way: In negative degrees, multiplication in $ \pi_*(k^{t\mathbb{T}^n})$ is the same as multiplication in $\pi_*(k^{h\mathbb{T}^n})$. In positive degrees, we have a class $\alpha \in \pi_{1}(k^{t\mathbb{T}^n})$ represented in Adams degree $(1,n-1)$ for $p=2$, or $\alpha \in \pi_{n+1}(k^{t\mathbb{T}^n})$ represented in Adams degree $(n+1,n-1)$ for $p$ odd.  This class $\alpha$ is a generator for the algebra $\check{C}H_J^{n - 1}(k^{h\mathbb{T}^n}) \cong k[x_1^{-1}, \ldots, x_n^{-1}]$.  One has $\alpha \cup x_i = 0$ and $\alpha \cup \alpha= 0$.  This, along with the $\pi_*(k^{h\mathbb{T}^n})$-algebra structure of $k[x_1^{-1}, \ldots, x_n^{-1}]$, determines the multiplication for all the terms in positive degrees. 
\end{thm}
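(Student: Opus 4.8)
The plan is to extract every product directly from the multiplicative \v{C}ech cohomology spectral sequence, exploiting its two-row structure. Since the Greenlees--May \v{C}ech construction $R[I^{-1}]$ is built from $\mathbb{E}_\infty$-rings, the associated spectral sequence $E_2 \cong \check{C}H^*_I\big(\pi_*(k^{h\mathbb{T}^n})\big) \Rightarrow \pi_*(k^{t\mathbb{T}^n})$ is multiplicative, with product respecting the bigrading. As recorded above, for $n \geq 2$ the $E_2$-page is concentrated in the two rows $s = 0$ and $s = n-1$, with $\check{C}H^0_I \cong \pi_*(k^{h\mathbb{T}^n}) \cong k[x_1, \ldots, x_n]$ supported in total degrees $\leq 0$ and $\check{C}H^{n-1}_I \cong k[x_1^{-1}, \ldots, x_n^{-1}]$ supported in total degrees $\geq 1$. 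A differential would have to lower total degree by one while jumping from one row to the other, which is impossible since the two rows occupy disjoint ranges of total degree; hence $E_2 = E_\infty$.

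First I would observe that there are neither additive nor multiplicative extension problems. In each total degree exactly one of the two rows is nonzero (the $s = 0$ row in nonpositive degrees, the $s = n-1$ row in positive degrees), so the filtration on each $\pi_d(k^{t\mathbb{T}^n})$ has a single nonzero quotient. Consequently the product on $\pi_*(k^{t\mathbb{T}^n})$ agrees on the nose with the product on the $E_\infty = E_2$ page, and it suffices to read the three possible types of products off the bigraded algebra $\check{C}H^*_I$.

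Next I would dispatch the three cases. A product of two negative-degree classes comes from the subring $\check{C}H^0_I \cong \pi_*(k^{h\mathbb{T}^n})$, so negative-degree multiplication is literally that of $k^{h\mathbb{T}^n}$. A product of a negative-degree class with a positive-degree one lands in the $s = n-1$ row and is governed by the structure of the top local cohomology $\check{C}H^{n-1}_I \cong k[x_1^{-1}, \ldots, x_n^{-1}]$ as a module over $k[x_1, \ldots, x_n]$, under which $x_i$ lowers the $i$-th inverse exponent. Identifying $\alpha$ with the socle class $x_1^{-1} \cdots x_n^{-1}$, the unique bottom class of the row (living in $\pi_1$ for $p = 2$ and $\pi_{n+1}$ for $p$ odd), this gives $\alpha \cup x_i = 0$ because $x_i$ annihilates the socle. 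Finally, a product of two positive-degree classes lands in filtration $s = 2(n-1)$; for $n \geq 2$ this strictly exceeds $n-1$ and hence lies in a vanishing row, so all such products are zero and in particular $\alpha \cup \alpha = 0$.

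The hard part, and really the only step with content beyond bookkeeping, will be pinning down the $k[x_1, \ldots, x_n]$-module structure on the top \v{C}ech/local cohomology and confirming that the spectral-sequence product reproduces it faithfully. Once the socle annihilation $\alpha \cup x_i = 0$ and the row-degree vanishing $\alpha \cup \alpha = 0$ are in hand, the $\pi_*(k^{h\mathbb{T}^n})$-module structure of $k[x_1^{-1}, \ldots, x_n^{-1}]$ determines every remaining positive-degree product, which completes the description. The argument is uniform in $p$, the only difference being the degree shift of the $s = n-1$ row recorded in \cref{fig:CechP2} and \cref{fig:CechPOdd}.
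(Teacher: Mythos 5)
Your proof is correct and takes essentially the same approach as the paper: both read the multiplication off the multiplicative \v{C}ech cohomology spectral sequence, using the two-row structure to force collapse and to kill products of two positive-degree classes by filtration reasons, with negative-degree products coming from $\check{C}H^0_I \cong \pi_*(k^{h\mathbb{T}^n})$ and mixed products from the module structure of the top local cohomology. Your explicit treatment of the absence of extension problems and the identification of $\alpha$ with the socle class $x_1^{-1}\cdots x_n^{-1}$ simply fill in details the paper leaves implicit.
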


From this, we infer what the multiplication in $\pi_{-*}\big(k^{t(C_p)^n}\big) \cong \widehat{H}^*((C_p)^n;k)$ must be. For $p=2$, since the group cohomology $\pi_{-*}\big(k^{h(C_p)^n}\big) \cong H^*((C_p)^n;k)$ is precisely the polynomial algebra $\pi_{-*}(k^{h\mathbb{T}^n})$, the multiplication in $\widehat{H}^*((C_p)^n;k)$ is precisely the same as the multiplication in $\pi_{-*}(k^{t\mathbb{T}^n})$.  In particular, we provide the computation for the Tate cohomology of $(C_2)^2$ (Figure \ref{tc22}).

For $p$ odd, since the group cohomology $\pi_{-*}\big(k^{h(C_p)^n}\big) \cong H^*((C_p)^n;k)$ is free over the polynomial subalgebra $\pi_{-*}(k^{h\mathbb{T}^n})$, we simply need to tensor the \v{C}ech cohomology spectral sequence computing $\pi_*(k^{t\mathbb{T}^n})$ with the exterior algebra $\Lambda (y_1, \ldots, y_n)$ where $y_i$ is represented in Adams degree $(-1,0)$ to obtain the multiplication in $\widehat{H}^*((C_p)^n;k)$. 

\begin{figure}
\centering
\includegraphics{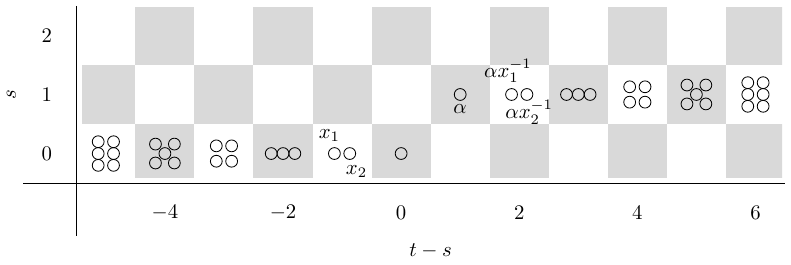}
    \caption{The Adams-graded $E_2$-page of the {\v C}ech cohomology spectral sequence computing $\pi_* \big(k^{t(C_2)^2}\big)$. There are no differentials and the spectral sequence collapses.}
\label{tc22}
\end{figure}

\subsection{Tate cohomology of dihedral $p$-groups}
\label{appendix:dihedral}

Recall that for $n \geq 3$, $H^*(D_{2^n};k) \cong k[x_1, x_2, z]/(x_1x_2)$, where $|x_i| = 1$ and $|z| = 2$. Moreover, $\textnormal{Sq}^1(z) = (x_1+x_2)z$.  It is convenient to set $u = x_1 + x_2$ and write
\[H^*(D_{2^{n-1}}; k) \cong k[x_1,u, z]/(ux_1 +x_1^2)\text{.}\]
This calculation is standard in the literature, by using induction and the cohomological Serre spectral sequence associated to the fibration $BC_2 \to BD_{2^n} \to BD_{2^{n-1}}$.  One can restrict to certain subgroups of $D_{2^{n-1}}$ to determine the differentials.

Note that $H^*(D_{2^n};k)$ is also Cohen--Macaulay with ideal $I = (u,z)$. Furthermore, we have the quotient $H^*(D_{2^n};k) / (u,z) \cong \Lambda(x_1)$.  We can take $A$ such that $A_* \cong k[u,z]$, and we therefore obtain the \v{C}ech cohomology spectral sequence (Figure \ref{cechdihedral}).

\begin{figure}
\centering
\includegraphics{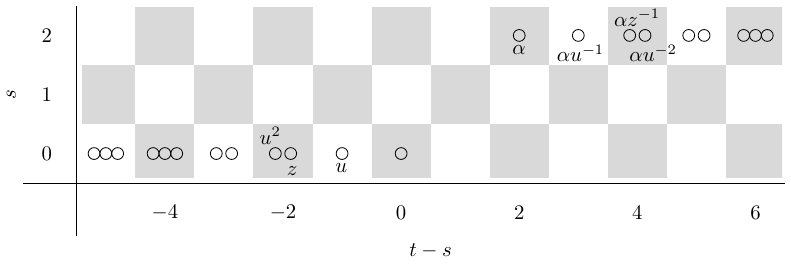}
    \caption{The Adams-graded $E_2$-page of the {\v C}ech cohomology spectral sequence computing $\pi_* \big(A[I^{-1}]\big)$. There are no differentials and the spectral sequence collapses.}
\label{cechdihedral}
\end{figure}

The multiplication in $\pi_*(A[I^{-1}])$ is described in the following way: In negative degrees, multiplication in $ \pi_*(A[I^{-1}])$ is the same as multiplication in $A_* \cong k[u,z]$. In positive degrees, we have a class $\alpha \in (2,0)$, which generates the algebra $\check{C}H_J^{n - 1}(A_*) \cong k[u^{-1}, z^{-1}]$.  One has $\alpha \cup u = \alpha \cup z = 0$ and $\alpha \cup \alpha= 0$.  This, along with the $A_*$-algebra structure of $k[u^{-1},z^{-1}]$, determines the multiplication for all the terms in positive degrees. 

From this, we infer what the multiplication in $\wh{H}^*(D_{2^n};k)$ must be by tensoring the spectral sequence (Figure \ref{cechdihedral}) with the exterior algebra $\Lambda(x_1)$, where $x_1$ is represented in Adams degree $(-1,0)$ (Figure \ref{tatedihedral}).

\begin{figure}
\centering
\includegraphics{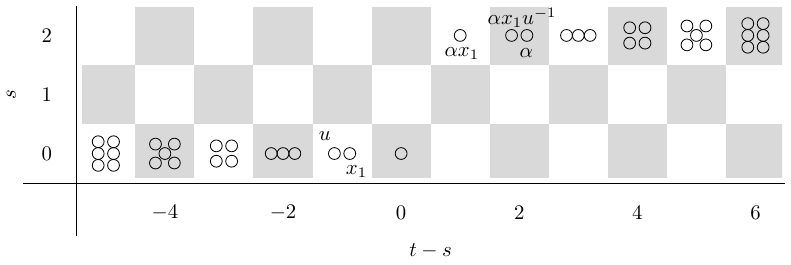}
    \caption{The Adams-graded $E_2$-page of the \v{C}ech cohomology spectral sequence computing $\pi_*\big(k^{tD_{2^n}}\big)$. There are no differentials again.}
\label{tatedihedral}
\end{figure}

\pagebreak

\section*{Acknowledgements}
\label{acknowledgements}
\addcontentsline{toc}{section}{\nameref{acknowledgements}}

The first author is grateful to Jesper Grodal and Kaif Hilman for their comments and insights, and to Alexis Aumonier for proofreading part of the document. The second author would especially like to thank Akhil Mathew and Vesna Stojanoska for catching errors and helpful discussions about this project.

\vspace{3mm}\noindent
\framebox[\textwidth]{
\begin{tabular*}{0.96\textwidth}{@{\extracolsep{\fill} }cp{0.84\textwidth}}
\raisebox{-0.7\height}{
\begin{tikzpicture}[y = 0.80pt, x = 0.8pt, yscale = -1, inner sep = 0pt, outer sep = 0pt, scale = 0.12]
    \definecolor{c003399}{RGB}{0,51,153}
    \definecolor{cffcc00}{RGB}{255,204,0}
    \begin{scope}[shift={(0,-872.36218)}]
        \path[shift={(0,872.36218)},fill=c003399,nonzero rule] (0.0000,0.0000) rectangle (270.0000,180.0000);
        \foreach \myshift in 
            {(0,812.36218), (0,932.36218), 
            (60.0,872.36218), (-60.0,872.36218), 
            (30.0,820.36218), (-30.0,820.36218),
            (30.0,924.36218), (-30.0,924.36218),
            (-52.0,842.36218), (52.0,842.36218), 
            (52.0,902.36218), (-52.0,902.36218)}
        \path[shift=\myshift,fill=cffcc00,nonzero rule] (135.0000,80.0000) -- (137.2453,86.9096) -- (144.5106,86.9098) -- (138.6330,91.1804) -- (140.8778,98.0902) -- (135.0000,93.8200) -- (129.1222,98.0902) -- (131.3670,91.1804) -- (125.4894,86.9098) -- (132.7547,86.9096) -- cycle;
    \end{scope}
\end{tikzpicture}}
&
\footnotesize{The first author was supported by the European Research Council (ERC) under the European Union's Horizon 2020 research and innovation programme (grant agreement N\textsuperscript{\underline{o}} 682922).}
\end{tabular*}
}

\clearpage
 
\addcontentsline{toc}{section}{References}
\setlength{\parskip}{0.2cm}
\bibliographystyle{alpha}
\bibliography{PicGaloisDescent}

\begin{thebibliography}{MNN15}

\bibitem[Bal13]{balmer}
Paul Balmer.
\newblock Stacks of group representations, 2013.

\bibitem[BC92]{Benson1992-df}
D~J Benson and Jon~F Carlson.
\newblock Products in negative cohomology.
\newblock {\em J. Pure Appl. Algebra}, 82(2):107--129, October 1992.

\bibitem[BGH]{barthel77torsion}
T~Barthel, J~Grodal, and J~Hunt.
\newblock Endotrivial modules via higher algebra.
\newblock {\em Forthcoming}.

\bibitem[BK72]{bousfield1972homotopy}
Aldridge~K Bousfield and Daniel~M Kan.
\newblock The homotopy spectral sequence of a space with coefficients in a
  ring.
\newblock {\em Topology}, 11(1):79--106, 1972.

\bibitem[CT00]{Carlson2000-ux}
Jon~F Carlson and Jacques Th{\'e}venaz.
\newblock Torsion {Endo-Trivial} modules.
\newblock {\em Algebr. Represent. Theory}, 3(4):303--335, December 2000.

\bibitem[Dad78]{dade}
Everett~C. Dade.
\newblock {Endo-Permutation Modules} over p-groups, {I}.
\newblock {\em Ann. Math.}, 107(3):459--494, 1978.

\bibitem[Duf81]{Duflot1981}
J.~Duflot.
\newblock Depth and equivariant cohomology.
\newblock {\em Commentarii Mathematici Helvetici}, 56(1):627--637, Dec 1981.

\bibitem[GL16]{GepnerLawson2016-ps}
David Gepner and Tyler Lawson.
\newblock Brauer groups and {Galois} cohomology of commutative ring spectra.
\newblock July 2016.

\bibitem[GM95]{may1995completions}
J.P.C. Greenlees and J.P May.
\newblock Completions in algebra and topology.
\newblock {\em Handbook of Algebraic Topology}, pages 255--276, 1995.

\bibitem[Gre95]{Greenlees95commutativealgebra}
J.P.C. Greenlees.
\newblock Commutative algebra in group cohomology.
\newblock {\em J. Pure and Applied Algebra}, 98:151--162, 1995.

\bibitem[Gro18]{grodal2016endotrivial}
Jesper Grodal.
\newblock Endotrivial modules for finite groups via homotopy theory, 2018.

\bibitem[Lur09]{lurie2009higher}
Jacob Lurie.
\newblock {\em Higher topos theory}.
\newblock Princeton University Press, 2009.

\bibitem[Lur12]{lurie2012higher}
Jacob Lurie.
\newblock Higher algebra, 2012.

\bibitem[Mat15]{Mathewtorus2015-yg}
Akhil Mathew.
\newblock Torus actions on stable module categories, {Picard} groups, and
  localizing subcategories.
\newblock {\em ArXiv preprint}, December 2015.

\bibitem[Mat16]{mathew}
Akhil Mathew.
\newblock The {Galois} group of a stable homotopy theory.
\newblock {\em Adv. Math.}, 291:403--541, March 2016.

\bibitem[MNN15]{MNNnilpotence2015-jf}
Akhil Mathew, Niko Naumann, and Justin Noel.
\newblock Nilpotence and descent in equivariant stable homotopy theory.
\newblock {\em Advances in Mathematics}, 305, 07 2015.

\bibitem[MS16]{ms}
Akhil Mathew and Vesna Stojanoska.
\newblock The {Picard} group of topological modular forms via descent theory.
\newblock {\em Geom. Topol.}, 20(6):3133--3217, 2016.

\bibitem[Qui71]{Quillen71}
Daniel Quillen.
\newblock The spectrum of an equivariant cohomology ring: I.
\newblock {\em Annals of Mathematics}, 94(3):549--572, 1971.

\bibitem[Ric]{1340906}
Jeremy Rickard.
\newblock Two modules are isomorphic in the stable module category iff they are
  projectively equivalent.
\newblock Mathematics Stack Exchange,
  \url{https://math.stackexchange.com/a/1340906}.

\bibitem[Ric89]{rickard1989derived}
Jeremy Rickard.
\newblock Derived categories and stable equivalence.
\newblock {\em Journal of pure and applied Algebra}, 61(3):303--317, 1989.

\bibitem[Rog08]{rognes}
John Rognes.
\newblock Galois extensions of structured ring spectra.
\newblock {\em Memoirs of the American Mathematical Society}, 898, 01 2008.

\bibitem[Shi96]{shipleysseq}
Brooke~E. Shipley.
\newblock Convergence of the homology spectral sequence of a cosimplicial
  space.
\newblock {\em American Journal of Mathematics}, 118(1):179--207, 1996.

\end{thebibliography}

\end{document}